\newtheorem{thm}{Theorem}[section]
\newtheorem*{thm*}{Theorem}
\newtheorem{lem}[thm]{Lemma}
\newtheorem{prop}[thm]{Proposition}
\newtheorem{cor}[thm]{Corollary}
\theoremstyle{definition}
\newtheorem{example}[thm]{Example}
\newtheorem{rem}[thm]{Remark}
\newtheorem{rems}[thm]{Remarks}
\newtheorem{nota}[thm]{Notation}
\newcommand\bs{{\boldsymbol s}}
\newcommand\bu{{\boldsymbol u}}
\newcommand{\Z}{\mathbb{Z}}
\newcommand{\C}{\mathbb{C}}
\newcommand{\ot}{\otimes}
\newcommand{\s}{\sigma}
\numberwithin{equation}{section}
\title[Inductive basis on BMW algebras \& (transverse) Markov traces]{Inductive basis on Birman-Murakami-Wenzl algebras and (transverse) Markov traces}
\author{Lo\"ic Poulain d'Andecy, Anne-Laure Thiel and Emmanuel Wagner}
\date{06/11/2017}
\begin{document}

\begin{abstract}
We construct a new inductive basis of the Birman-Murakami-Wenzl algebra. Using it, we provide a new proof of the existence of the Markov trace on the BMW algebras affording the two-variable Kauffman polynomial. We prove also that all the transverse Markov traces on the BMW algebras are determined  by the self-linking number, the HOMFLY--PT polynomial and the two-variable Kauffman polynomial.
\end{abstract}

\maketitle

\tableofcontents


\section*{Introduction}

There are two fundamental theorems relating links and braids. The first by Alexander asserts that any link can be presented as the closure of a braid; the second by Markov specifies  how to pass from one braid to another if they have isotopic closures. Since the seminal work of Jones \cite{Jo2}, it is understood that one can use effectively these two main theorems relating links and braids to produce link invariants.\\

The Jones construction uses as a key ingredient a finite dimensional quotient of the group algebra of the braid group, namely the Hecke algebra. He constructs explicitly a Markov trace using a very specific basis. This Markov trace affords the HOMFLY--PT link polynomial \cite{HOMFLY}, \cite{PT}. There are other proofs of the existence of this invariant; for instance, diagrammatic ones using the skein relations or representation theoretical ones using $R$-matrices and the natural representation of the quantum group of type $A$ \cite{Tu}.\\
 
A remarkable feature is that the Hecke algebra or Iwahori-Hecke algebra appeared in  other contexts before and is in particular a flat deformation of the group algebra of the symmetric group, see \cite{KaTu} or \cite{GePf} for an overview. In fact the basis used to construct the Markov trace is just the positive lift of the natural basis of the group algebra of the symmetric group (it amounts to see a reduced word in the symmetric group as a word in the braid group).\\

The case of the Birman-Murakami-Wenzl \cite{BW,Mu} algebra is quite different. It was introduced after the development of quantum link invariants and was precisely constructed to have a Markov trace affording the two-variable Kauffman polynomial \cite{Kau2}. The existence of this invariant was proved using skein theory by Kauffman and was from the very beginning seen as a two variable version of the infinite families of the quantum invariants corresponding to the natural representation of quantum groups of type $B/C/D$ (which also provides a proof of its existence, see \cite{Tu}, \cite{MorBMW}). In addition the BMW algebra is a flat deformation of the Brauer algebra.\\ 

Our first result is to provide a proof of the existence of the two-variable Kauffman polynomial similar to the one given by Jones for the HOMFLY--PT polynomial. To achieve this, we first construct a nice inductive basis of the BMW algebra.\\

Let $B_n$ be the braid group on $n$ strands. We denote by $BMW_n$ the BMW algebra seen as a quotient of the group algebra of $B_n$ and by $s_i$, $i=1,\cdots,n-1,$ its generators, images of the standard ones of $B_n$. The basis $\mathbf{b}^{BMW}_{n}$ is constructed inductively starting from the basis $\mathbf{b}^{BMW}_{1}= \{ 1 \}$ of $BMW_{1}$. Consider the following elements in $BMW_{n+1}$:
$$ x_{n-1,i} = s_{n-1}^{-1} \dots s_i^{-1} \quad \mbox{and} \quad y_{i,n-1} = s_i \dots s_{n-1}.$$
We denote by $\iota_n$ the natural embedding of $B_n$ in $B_{n+1}$ given by adding a vertical strand on the right.

\begin{thm*} \label{basisBMW}
The family 
$$\mathbf{b}^{BMW}_{n+1} = \{ \iota_n(b), \iota_n(b)x_{n,i}, y_{i,n}\iota_n(b) | b \in \mathbf{b}^{BMW}_{n}, i = 1, \dots, n \}$$
 forms a basis of $BMW_{n+1}$.
\end{thm*}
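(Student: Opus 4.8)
The plan is to argue by induction on $n$, with induction hypothesis that $\mathbf{b}^{BMW}_{n}$ is a basis of $BMW_n$; the base case $\mathbf{b}^{BMW}_1=\{1\}$ is immediate. The first step is a dimension count: since $\dim BMW_m=(2m-1)!!$, the hypothesis gives $|\mathbf{b}^{BMW}_n|=(2n-1)!!$, and the prescribed family has at most $(1+2n)\,|\mathbf{b}^{BMW}_n|=(2n+1)!!=\dim BMW_{n+1}$ elements. Consequently it is enough to prove that $\mathbf{b}^{BMW}_{n+1}$ spans $BMW_{n+1}$: a spanning set of cardinality at most the dimension is automatically a basis (it then has exactly the right cardinality and is linearly independent), so I never need to check separately that the listed elements are pairwise distinct.

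Set $M=\mathrm{span}_R(\mathbf{b}^{BMW}_{n+1})$, where $R$ is the ground ring. Since $BMW_{n+1}$ is generated as an algebra by $\iota_n(BMW_n)$ together with $s_n^{\pm 1}$, and $1=\iota_n(1)\in M$, it suffices to show that $M$ is stable under left multiplication by $\iota_n(BMW_n)$ and by $s_n^{\pm1}$: stability under a generating set propagates to the whole algebra, so $M$ becomes a left ideal containing $1$, whence $M=BMW_{n+1}$. Stability under $\iota_n(BMW_n)$ is immediate on the first two families, as $\iota_n(c)\iota_n(b)=\iota_n(cb)$ and $\iota_n(c)\iota_n(b)x_{n,i}=\iota_n(cb)x_{n,i}$ are again combinations of elements of $\mathbf{b}^{BMW}_{n+1}$ once $cb$ is expanded in $\mathbf{b}^{BMW}_n$. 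For the third family I would reduce to a single generator $s_j$, $j\le n-1$, and compute $s_j\,y_{i,n}$ using only braid relations: for $j\le i-2$ one gets $y_{i,n}s_j$, for $j=i-1$ one gets $y_{i-1,n}$, and for $i<j\le n-1$ one gets $y_{i,n}s_{j-1}$, each time landing back in $\sum_i y_{i,n}\iota_n(BMW_n)\subseteq M$. Likewise $s_n\iota_n(b)=y_{n,n}\iota_n(b)$ is directly of the third type.

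The two places where the braid relations alone do not suffice, and where the main difficulty lies, are the quadratic case $j=i$ above and left multiplication by $s_n^{-1}$. Here the defining relations of $BMW_{n+1}$ must enter: they express $s_i^{2}$ and $s_i^{-1}$ as $R$-linear combinations of $1$, $s_i$ and the Kauffman tangle element $e_i$. Substituting them produces, besides terms already treated, new terms carrying a factor $e_i$, typically of the shape $e_i\,y_{i+1,n}\,\iota_n(b)$, or an uncancelled factor $s_n^{-1}s_{n-1}$ hitting an incoming $y_{i,n-1}$ that arises from the third family of $\mathbf{b}^{BMW}_n$. The crux of the proof is therefore a separate package of reduction lemmas showing that each such $e_i$-term again lies in $M$; I expect these to follow from the relations governing the $e_i$ (of the form $e_ie_{i\pm1}e_i=e_i$, $e_is_{i\pm1}^{\pm1}e_i=\lambda^{\pm1}e_i$, and $s_is_{i+1}e_i=e_{i+1}e_i$), which geometrically absorb or cancel the adjacent braiding and lower the number of strands genuinely involved. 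It is natural to couple this with a secondary induction on the inductive description of $b\in\mathbf{b}^{BMW}_n$ itself: when $b$ is of the first or second type, $s_n^{\pm1}$ commutes past the $\iota_{n-1}$-supported part of $\iota_n(b)$ and the outcome ($\iota_n(b)x_{n,n}$, respectively $\iota_n(b)x_{n,i}$ after conjugating) is literally of the second type, so only the third type of $b$ feeds into the delicate $e_i$-reductions. Once this stability is established, $M=BMW_{n+1}$, and together with the dimension count this proves that $\mathbf{b}^{BMW}_{n+1}$ is a basis.
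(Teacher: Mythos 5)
Your overall strategy (a cardinality count plus showing that the span $M$ of the proposed family is a left ideal containing $1$) is sound in principle, and the computations you do carry out are correct: the identities $s_jy_{i,n}=y_{i,n}s_j$ for $j\leq i-2$, $s_{i-1}y_{i,n}=y_{i-1,n}$, $s_jy_{i,n}=y_{i,n}s_{j-1}$ for $i<j\leq n-1$, and the stability of the first two families under left multiplication by $\iota_n(BMW_n)$ all check out. However, there is a genuine gap exactly where you flag ``the main difficulty.'' Expanding $s_i^{2}$ and $s_n^{-1}$ via the cubic relations produces terms such as $e_i\,y_{i+1,n}\,\iota_n(b)$ and $s_n^{-1}$ hitting elements of the third family, and for these you only say that you ``expect'' a package of reduction lemmas to follow from $e_ie_{i\pm1}e_i=e_i$, $e_is_{i\pm1}^{\pm1}e_i=a^{\pm1}e_i$, and so on. This is precisely the substantive content of the theorem, not a routine verification: the element $b$ is itself built inductively out of $x$'s and $y$'s, so the $e_i$-factor interacts with the entire inductive structure of $b$, the rewriting cascades, and you exhibit neither the actual identities needed nor an induction parameter guaranteeing that the reduction terminates. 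As written, the argument is a plan for a proof rather than a proof.

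It is worth noting that the paper sidesteps the closure-under-multiplication problem entirely. After the same cardinality reduction, it invokes Morton's theorem: any family of tangles obtained by lifting each pairing in the Brauer basis $\mathbf{b}^{Br}_{n+1}$ to a tangle, with the crossings resolved arbitrarily as positive or negative, is automatically a basis of $BMW_{n+1}$. It therefore suffices to exhibit, for each Brauer pairing, \emph{one} convenient lift lying in the span of $\mathbf{b}^{BMW}_{n+1}$; this is done in three cases according to how the $(n+1)$st top and bottom points are paired, and the only non-braid manipulation required is the single substitution of $e_n$ by $1+z^{-1}(s_n^{-1}-s_n)$ in the third case. To complete your approach you would need to supply the full reduction machinery for the $e_i$-terms, which is substantially more work; adopting Morton's lifting theorem as the key external input closes the argument much more economically.
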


As said before we obtain as a consequence a new proof of the existence of the Kauffman polynomial, see Theorem \ref{reconstr Kauffman}. We expect this proof to be well suited to be adapted to study similar questions about traces on other BMW algebras \cite{CoWa} than the classical one studied in this paper.

We finish this discussion on classical Markov traces on BMW algebras by mentioning that, in the literature  regarding the question of classification of Markov traces on the BMW algebras, the trace is always supposed to satisfy an additional property that we call the multiplicative property, see for instance \cite{JoSubKnot}. This property is very natural from the tangle description of the BMW algebras or from the quantum groups perspective where it just amounts to saying that the invariant evaluated on a disjoint union of links is just the product of the invariants of each part. It was proved in \cite{MaWaBMW} that, even without supposing this additional property, the space of Markov traces on the BMW algebras is two dimensional and spanned by the traces affording the HOMFLY--PT polynomial and the two variable Kauffman polynomial.\\

The fact that one could a priori remove the multiplicativity condition on the Markov trace comes from the fact that the BMW algebra is a cubical quotient of the group algebra of the braid group. For the Hecke algebra this property follows directly from the stabilization and the quadratic relation defining the Hecke algebra.
This suggested that on the BMW algebra there could be enough room to construct other types of Markov traces, namely transverse Markov traces. Studying cubical quotients of the group algebra of the braid group to produce effective transverse link invariants  was also the motivation of Bellingeri and Funar \cite{BeFu} and pursued by Orevkov \cite{Orev}.\\

For a detailed introduction on transverse braids and transverse links, one can consult the nice survey by Etnyre \cite{EtnySur}. For us the key result is the Markov type theorem \cite{OrSh,Wrin}. In a nutshell, given the standard contact structure on the three-sphere one can consider links in the three sphere which are transverse to the plane distribution up to isotopies which preserve this property. The main invariant is the self-linking number which for a link given as a braid closure is just the difference between the braid index and the algebraic length of the braid. One important question was to see if the topological type of a transverse knot together with its self-linking number is a complete transverse invariant. This was proved to be true for certain classes of knots \cite{BiWrin}, \cite{Eliash,Etny1} but also to be false in general \cite{BiMen,Etny2,Etny3}.\\
 
Now there are other ways to prove negative results by using effective transverse invariants which can distinguish transverse links with the same self-linking number and the same topological type \cite{Ng,NgOzThu}. Unfortunately transverse Markov traces on the BMW algebras do not produce such effective invariants as the next theorem asserts.\\

\begin{thm*}
Transverse Markov traces on the BMW algebras are determined by the HOMFLY--PT polynomial, the two-variable Kauffman polynomial and the self-linking number.
\end{thm*}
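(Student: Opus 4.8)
The plan is to reduce the classification of transverse Markov traces to a finite linear‑algebra problem by means of the inductive basis theorem stated above, and then to match the resulting solution space with the three announced invariants. Throughout, a transverse Markov trace is a family $(\tau_n)_{n\ge 1}$ of $R$-linear traces $\tau_n\colon BMW_n\to R$ (so $\tau_n(ab)=\tau_n(ba)$) satisfying \emph{only} the positive stabilisation relation $\tau_{n+1}(\iota_n(x)s_n)=z\,\tau_n(x)$ for all $x\in BMW_n$, together with a normalisation. By the transverse Markov theorem \cite{OrSh,Wrin}, such a family produces a transverse link invariant after normalising the braid closure by the appropriate power of $z$. Crucially, and unlike the classical setting of \cite{MaWaBMW}, we do \emph{not} impose the negative stabilisation relation, and this is precisely where the extra freedom will live.

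First I would use the inductive basis to compute $\tau_{n+1}$ from lower data on each of the three families. On $y_{i,n}\iota_n(b)=s_n\,(s_i\cdots s_{n-1}\iota_n(b))$ the single letter $s_n$ sits, after cyclic permutation, next to an element of $\iota_n(BMW_n)$, so positive stabilisation pins its value down in terms of $\tau_n$. The same cyclic manipulation rewrites the family $\iota_n(b)x_{n,i}$ as evaluations $\tau_{n+1}(d\,s_n^{-1})$ with $d\in BMW_n$, i.e. as \emph{negative} stabilisations, which are not directly constrained. Here I would invoke the BMW skein relation $s_n-s_n^{-1}=(q-q^{-1})(1-e_n)$ to write
\[
\tau_{n+1}(d\,s_n^{-1})=z\,\tau_n(d)-(q-q^{-1})\,\mu_n(d)+(q-q^{-1})\,\nu_n(d),
\]
where $\mu_n(d):=\tau_{n+1}(\iota_n(d))$ is the disjoint‑strand functional and $\nu_n(d):=\tau_{n+1}(\iota_n(d)e_n)$ the $e$‑functional. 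Both are again traces on $BMW_n$, so the whole determination reduces to controlling these two auxiliary families.

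The next step is to show that $\mu_n$ and $\nu_n$ close up recursively. For $\nu_n$ I would use the partial‑closure identities $e_n\iota_n(x)e_n=E(x)\,e_n$ and $e_n^2=\delta\,e_n$, with $E\colon BMW_n\to BMW_{n-1}$ the conditional expectation and $\delta$ the loop parameter, to express $\nu_n$ through $\tau_{n-1}$; this is where the classical argument invokes multiplicativity, and one must now keep the extra term. For $\mu_n$ the trace property, propagated through the basis via the braid relation $s_{n-1}s_ns_{n-1}=s_ns_{n-1}s_n$, constrains $\mu_n$ to a one‑parameter deformation of the classical relation $\mu_n=\delta\,\tau_n$. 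Assembling these identities yields a fixed linear recursion from $(\tau_n,\mu_n,\nu_n)$ to $(\tau_{n+1},\mu_{n+1},\nu_{n+1})$. Running the computation of \cite{MaWaBMW} with the negative‑stabilisation normalisation \emph{removed}, I would show that the failure of negative stabilisation is a single conjugation‑ and positive‑stabilisation‑invariant ``defect'', so that the transverse trace space is the classical two‑dimensional space enlarged by exactly one dimension.

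Finally I would identify the three solutions. Pulling a trace back along the quotient $BMW_n\twoheadrightarrow H_n$ onto the Hecke algebra gives the HOMFLY–PT trace (negative stabilisation is automatic there, $H_n$ being merely a quadratic quotient); the classical BMW Markov trace of Theorem \ref{reconstr Kauffman} gives the Kauffman solution; and the one‑dimensional representation $\phi(s_i)=\lambda$, $\phi(e_i)=\delta$ (with $\lambda$ the framing parameter) gives the third, whose normalised closure invariant is the monomial $\lambda^{\,w-n+1}$, hence a function of the self‑linking number $sl=w-n$. Linear independence would be checked by evaluating on a short list of transverse links, such as framed unknots and $(2,k)$‑torus links, on which HOMFLY–PT, Kauffman and $sl$ are known to be functionally independent; this forces the solution space to be exactly these three and proves the theorem. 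The main obstacle is the middle step: showing that relaxing the negative stabilisation relation enlarges the classical trace space by \emph{exactly} one dimension, and that this single extra parameter is precisely the self‑linking direction rather than some finer, genuinely new transverse invariant. Establishing this requires verifying that the trace property together with the braid relation closes the recursion for $\mu_n$ and $\nu_n$ at every level without admitting further free parameters.
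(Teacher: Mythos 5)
Your reduction strategy (use the inductive basis, convert the $x_{n,i}$-family into negative stabilisations via cyclic permutation, and control the two auxiliary functionals $\mu_n(d)=\tau_{n+1}(\iota_n(d))$ and $\nu_n(d)=\tau_{n+1}(\iota_n(d)e_n)$) is the right skeleton and parallels Proposition \ref{propinddesc'} and the lemmas preceding it; in particular your treatment of $\nu_n$ via partial closure is essentially relation \eqref{trtr1'}, which does reduce the $e$-functional to the single parameter $\beta=\tau_2(e_1)$. The fatal gap is the middle step: you claim the trace property forces $\mu_n$ into ``a one-parameter deformation of $\mu_n=\delta\,\tau_n$'', hence that the transverse trace space is the classical two-dimensional space plus one dimension. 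This is false. Nothing in the axioms (trace plus positive stabilisation only) constrains $\tau_{n+1}(\iota_n(d))$ in terms of $\tau_n$: each value $\alpha_n=\tau_n(1)$ is an independent free parameter, exactly as in the Hecke classification (Theorem \ref{classification}), and the space of transverse Markov traces on $(BMW_n)_{n\geq1}$ is infinite-dimensional, with basis $\{(t_n^K)_{n\geq1}\}\cup\{(\tau_n^{(i)})_{n\geq1}\}_{i\geq1}$, the $\tau^{(i)}$ being the basic Hecke transverse traces pulled back along $q_E$. Consequently your recursion for $\mu_n$ cannot close with a single defect parameter, and no evaluation on a finite list of links can certify completeness of a three-element solution set.

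Two further concrete problems. First, your proposed third solution, the one-dimensional representation $\phi(s_i)=\lambda$, $\phi(e_i)=\delta$, does not exist for generic $a,q$: relations \eqref{BMW3} and \eqref{BMW4} force $\lambda=a^{-1}$ and then $\delta=a^2$, so generically every one-dimensional representation kills the $e_i$ and factors through $H_n$, landing you back inside the Hecke family rather than producing a new direction. Second, the self-linking number does not enter as one extra trace; it enters because the infinitely many basic traces $\tau^{(k)}$ are recovered, via the inequality $-i(\bu)\leq d(\bu)$ of Morton--Franks--Williams type, as (essentially) the coefficients of the expansion of $a^{-sl}P(\widehat{\bu})$ in powers of $\delta^H$, compare \eqref{tau-basic}. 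That is the sense in which the whole infinite-dimensional space is ``determined by'' HOMFLY--PT, Kauffman and $sl$, and it is this statement--not three-dimensionality--that your argument would need to establish.
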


The classification of the transverse Markov traces on the Hecke algebras was surely known by experts, but we could not find a written reference. We obtain in fact the classification of the transverse Markov traces on the BMW algebras: the only transverse Markov traces on the BMW algebras are the transverse ones coming from the Hecke algebras and the topological one giving the two-variable Kauffman polynomial (a classical invariant is always a transverse invariant). A nice interpretation of the former result passes through the famous Morton-Franks-Williams inequality \cite{FrWi} \cite{MorIne}. This inequality says that the minimal degree in the framing variable $a$ of the HOMFLY--PT polynomial is greater or equal to the self-linking number. The HOMFLY--PT invariant is a Laurent polynomial in $a$ and the MFW inequality allows to say that the HOMFLY--PT polynomial multiplied by $a$ to the power the opposite of the self-linking number is in fact a polynomial in $a$, one can then divide the result following the increasing powers of the value of the HOMFLY--PT polynomial of the unknot. The polynomials in front of the powers of the value of the unknot are immediately transverse invariants and in fact provide the basic transverse Markov traces on the Hecke algebras.\\

The previous interpretation of the theorem indicates that one can deduce from Morton-Franks-Williams type inequalities for topological link invariants existence of non-effective transverse Markov traces. It applies in particular to colored HOMFLY--PT polynomials \cite{WuIne}. In another direction, there are some other finite dimensionnal quotients of the group algebra of the braid group, on which one can try to classify transverse Markov traces. It could be achieved on the extension of the BMW algebras defined by Marin and Wagner \cite{MaWaBMW} or on the Yokonuma-Hecke algebra \cite{Yoko} \cite{PdAWyoko} but would certainly produce non-effective transverse invariants. Another candidate is the defining algebra of the Links-Gould invariant studied in \cite{MaWa}, but the understanding of this algebra, and in particular of inductive basis would require more work to attack the classification of transverse Markov traces on it.\\

In the first section, we introduce the Hecke and BMW algebras and their properties. We describe and prove the existence of the new inductive basis on the BMW algebras. In the second section, after introducing classical Markov traces, we provide a new proof of the existence of the two-variable Kauffman polynomial link invariant. In the third and last section we investigate the question of transverse Markov traces on the Hecke and BMW algebras. This section does also contain a discussion and a proof of the Morton-Franks-Williams inequality.\\

\section{The Hecke and Birman-Murakami-Wenzl algebras}

Fix $n\geq 2$ and the ring $R = \C \left[ a^{\pm 1}, q^{\pm 1}, (q-q^{-1})^{\pm 1}\right] $. 

We start this Section by recalling some definitions and well-known facts about these two algebras $H_n$ and $BMW_n$, which are finite-dimensional quotients of the group algebra of the braid group $B_n$ on $n$ strands. In the end, we will construct a new basis of the latest. We refer to \cite{KaTu} and \cite{JoSubKnot} for a more detailed treatment of these algebras.

Set $H_1 \cong R \cong BMW_1$.

\subsection{The Hecke algebras}

The Hecke algebra $H_n$ is the algebra over $R$ generated by the elements $\sigma_i$ for $i= 1,\dots, n-1$ satisfying the braid relations:
\begin{align}
\s_i\s_j & =  \s_j\s_i \qquad & &\text{for distant $i,j=1,\dots,n-1$,} \label{H1}\\ 
\s_i\s_{i+1}\s_i & =  \s_{i+1}\s_i\s_{i+1} & & \text{for $i=1,\dots,n-2$,} \label{H2}
\end{align}
and
\begin{equation}
\s_i^2=1+ (q-q^{-1})\s_i,\ \ \ \ \quad\text{for $i=1,\dots,n-1$.} \label{H3}
\end{equation}
It follows that:
\begin{equation}
\s_i^{-1}=\s_i-(q-q^{-1}),\ \ \ \ \quad\text{for $i=1,\dots,n-1$,} \label{H4}
\end{equation}
or equivalently
\begin{equation}\label{H5}
0= \frac{\s_i^{-1} -\s_i}{q - q^{-1}} + 1,\ \ \ \ \quad\text{for $i=1,\dots,n-1$.} 
\end{equation}

The Hecke algebra $H_n$ can be seen as a flat deformation of the group algebra of the symmetric group $S_n$ and, as such, its dimension is equal to the order $ n!$ of $S_n$.

\subsection{The Birman-Murakami-Wenzl algebras }

The Birman-Murakami-Wenzl algebra $BMW_{n}$ (or BMW algebra for short) is the algebra over $R $ generated by the elements $s_i$ for $i= 1, \dots , n-1$ satisfying the braid relations:
\begin{align}
s_i s_j & =  s_j s_i & & \mbox{for distant} \ i,j=1,\dots, n-1 \label{BMW1}\\
s_i s_{i+1} s_i& =  s_ {i+1} s_i s_{i+1}& & \mbox{for} \ i=1,\dots, n-2   \label{BMW2}
\end{align}
and, together with the elements 
\begin{equation}
\label{defei}
e_i = \frac{s_i^{-1} -s_i}{q - q^{-1}} + 1,
\end{equation}
the following additional relations:
\begin{align}
e_i s_i & =  a^{-1} e_i & & \mbox{for} \ i=1,\dots, n-1 \label{BMW3}\\
e_i s_{i+1} e_i & = a e_{i} & & \mbox{for} \ i=1,\dots, n-2   \label{BMW4}\\
e_i s_{i+1}^{-1} e_i & = a^{-1} e_{i} & & \mbox{for} \ i=1,\dots, n-2 .  \label{BMW4B}
\end{align}
Denote by $\overline{\cdot}$ the ring automorphism of $R$ defined by $\overline{a}=a^{-1}$ and $\overline{q}=q^{-1}$. The following map defines a ring anti-automorphism $\eta_n :BMW_n\to BMW_n$:
\begin{equation}\label{eta}
\eta_n : s_i\mapsto s_i^{-1}\ \ \ \ \ \text{and}\ \ \ \ \ \ \alpha\mapsto\overline{\alpha},\ \forall \alpha\in R .
\end{equation}
Using the conjugation by the half-twist $ s_{i+1} s_i s_{i+1}$, one also has:
\begin{align}
e_{i+1} s_{i} e_{i+1} & = a e_{i+1} & & \mbox{for} \ i=1,\dots, n-2   \label{BMW4'}\\
e_{i+1} s_{i}^{-1} e_{i+1} & = a^{-1} e_{i+1} & & \mbox{for} \ i=1,\dots, n-2 .  \label{BMW4B'}
\end{align}
From these relations follow some other ones that we will use repeatedly in the sequel:
\begin{align}
e_i e_{i+1} e_i & = e_{i} & &    \label{BMW5}\\
e_{i+1} e_{i} e_{i+1} & = e_{i+1} & &   \label{BMW6} 
\end{align}
for $i=1,\dots, n-2$, and
\begin{align}
s_i - s_i^{-1} & = \left( q - q^{-1}\right) \left( 1 - e_i \right) & &  \label{BMW7}\\
s_i^{2} & = 1 + \left( q - q^{-1}\right)s_i - \left( q - q^{-1}\right)a^{-1} e_i & &  \label{BMW8}\\
s_i^{2} & = \left( 1 - \left( q - q^{-1}\right)a^{-1} \right)1 +\left( a^{-1} + \left( q - q^{-1}\right) \right)s_i - a^{-1} s_i^{-1} & &  \label{BMW9}\\
s_i^{-2} & = 1 - \left( q - q^{-1}\right)s_i^{-1} + \left( q - q^{-1}\right)a e_i & &  \label{BMW10}\\
s_i^{-2} & = \left( 1 + \left( q - q^{-1}\right)a \right)1 + \left( a - \left( q - q^{-1}\right) \right) s_i^{-1} -a s_i & & \label{BMW11}\\
e_i^2 & = \left( \frac{a - a^{-1}}{q - q^{-1}} +1 \right) e_{i} & &    \label{BMW12}
\end{align}
for $i=1,\dots, n-1$.

The BMW algebra $BMW_n$ can be seen as a flat deformation of the Brauer algebra $Br_n$ with parameter $\delta$ and, as such, its dimension is equal to $ \frac{(2n)!}{2^n n!} = (2n-1) . (2n-3) \dots 5 . 3 . 1$. Note that a $\C \left[ \delta \right]-$basis $\mathbf{b}^{Br}_n$ of the Brauer algebra is given by all the pairings of a set of $2n$ points. 

As an algebra over $\C \left[ \delta \right]$, the Brauer algebra $Br_n$ is simply isomorphic to the specialization at $a = q= 1$ of 
$$\C \left[\delta, a^{\pm 1}, q^{\pm 1}\right] / \left( (\delta -1 ) (q - q^{-1}) - (a - a^{-1}) \right) \otimes_{\C \left[ a^{\pm 1}, q^{\pm 1}\right]} KT_n,$$
where $KT_n$ is the Kauffman tangle algebra. The algebra $KT_n$ is the free algebra over $\C \left[ a^{\pm 1}, q^{\pm 1}\right]$ generated by $(n,n)$--framed unoriented tangles up to regular isotopy and the following local skein relations:
\begin{equation*}
 \begin{array}{c} \vspace{-0.2cm} \includegraphics[height=0.65cm]{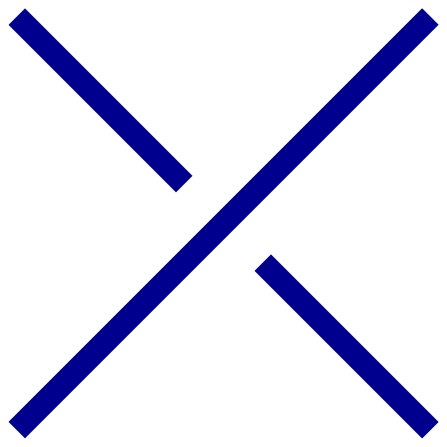} \end{array} - \begin{array}{c} \vspace{-0.2cm}  \includegraphics[height=0.65cm]{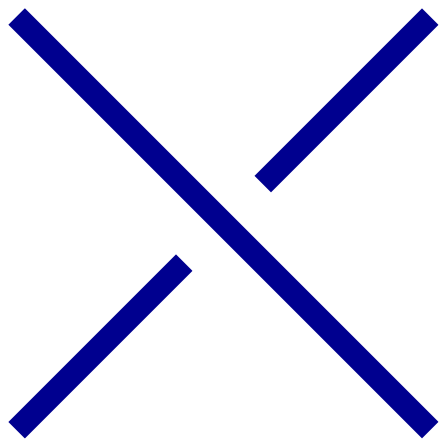} \end{array} = \left(q - q^{-1}\right) \left( \begin{array}{c} \vspace{-0.2cm} \includegraphics[height=0.65cm]{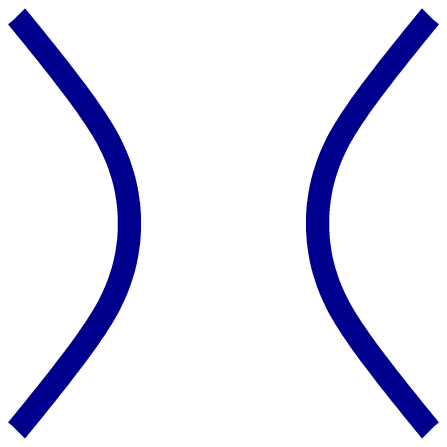} \end{array} - \begin{array}{c} \vspace{-0.2cm} \includegraphics[height=0.65cm]{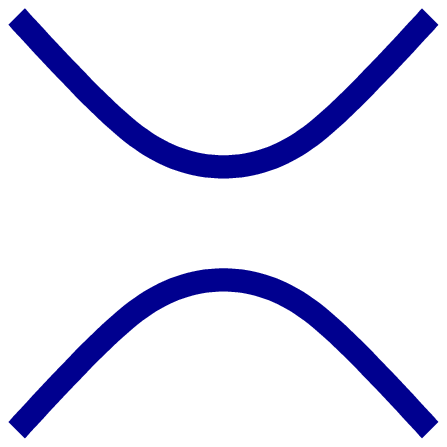} \end{array} \right),
\end{equation*}
\begin{equation}
\label{skeinrel}
 \begin{array}{c} \vspace{-0.2cm} \includegraphics[height=0.65cm]{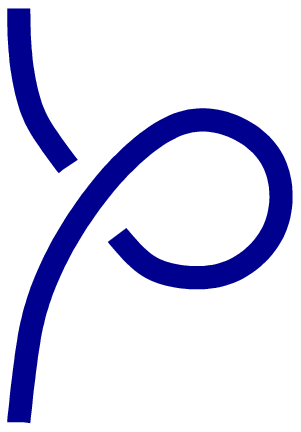} \end{array} = a \begin{array}{c}  \vspace{-0.2cm} \includegraphics[height=0.65cm]{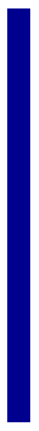} \end{array} \quad \mbox{and} \quad \begin{array}{c} \vspace{-0.2cm} \includegraphics[height=0.65cm]{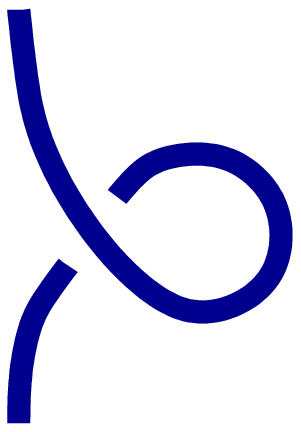} \end{array} = a^{-1} \begin{array}{c} \vspace{-0.2cm} \includegraphics[height=0.65cm]{one.eps} \end{array}.
\end{equation}
From those we can deduce
\begin{eqnarray*}
 \begin{array}{c} \vspace{-0.2cm} \includegraphics[height=0.65cm]{ucurl-l.eps} \end{array} - \begin{array}{c} \vspace{-0.2cm} \includegraphics[height=0.65cm]{ucurl-r.eps} \end{array} & = & \left(q - q^{-1}\right) \left( \begin{array}{c} \vspace{-0.2cm} \includegraphics[height=0.65cm]{one.eps} \end{array} \begin{array}{c} \vspace{-0.2cm} \includegraphics[height=0.65cm]{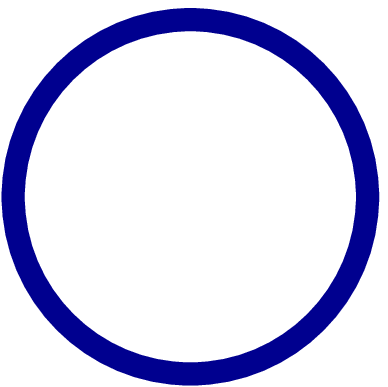} \end{array} - \begin{array}{c} \vspace{-0.2cm} \includegraphics[height=0.65cm]{one.eps} \end{array} \right) \\
\left(a - a^{-1}\right) \begin{array}{c} \vspace{-0.2cm} \includegraphics[height=0.65cm]{one.eps} \end{array}  & = & \left(q - q^{-1}\right) \left( \begin{array}{c} \vspace{-0.2cm} \includegraphics[height=0.65cm]{one.eps} \end{array} \begin{array}{c} \vspace{-0.2cm} \includegraphics[height=0.65cm]{loop.eps} \end{array} - \begin{array}{c} \vspace{-0.2cm} \includegraphics[height=0.65cm]{one.eps} \end{array} \right)
\end{eqnarray*}
and hence 
\begin{equation}
 \begin{array}{c} \vspace{-0.2cm} \includegraphics[height=0.65cm]{one.eps} \end{array} \begin{array}{c} \vspace{-0.2cm} \includegraphics[height=0.65cm]{loop.eps} \end{array} =\left( \frac{a - a^{-1}}{q - q^{-1}} +1 \right) \begin{array}{c} \vspace{-0.2cm} \includegraphics[height=0.65cm]{one.eps} \end{array} 
\label{valeurtrivial}
\end{equation}
if we extend $\C \left[ a^{\pm 1}, q^{\pm 1}\right]$ to $R$.

It is a well-known fact, see \cite{MorBMW}, that the BMW algebra $BMW_n$ is isomorphic to the Kauffman tangle algebra $KT_n$ (up to the extension of scalars $R \otimes_{\C \left[ a^{\pm 1}, q^{\pm 1}\right]}- $) via:
\begin{eqnarray*}
BMW_n & \rightarrow & KT_n \\
s_i & \mapsto & \begin{array}{c} \vspace{-0.1cm} \includegraphics[height=0.65cm]{Xing-p.eps} \end{array} \\
s_i^{-1} & \mapsto & \begin{array}{c} \vspace{-0.1cm} \includegraphics[height=0.65cm]{Xing-n.eps} \end{array} \\
e_i & \mapsto & \begin{array}{c} \vspace{-0.1cm} \includegraphics[height=0.65cm]{cupcap.eps} \end{array}
\end{eqnarray*}

\begin{nota}
From now on, let us set 
$$ z = q - q^{-1}, \quad \delta = \dfrac{a - a^{-1}}{q - q^{-1}} +1 \quad \text{and} \quad \delta^H = \dfrac{a - a^{-1}}{q - q^{-1}}.$$
\end{nota}

Topologically, it is obvious that, for $i= 1, \dots, n-2$,
\begin{equation}
\label{BMW13}
s_{i+1} e_i s_{i+1} = s_i^{-1} e_{i+1} s_i^{-1}
\end{equation}
and
\begin{equation}
\label{BMW14}
s_{i+1}^{-1} e_i s_{i+1}^{-1} = s_i e_{i+1} s_i
\end{equation}
from which we can deduce, using the definition \eqref{defei} of the $e_i$'s, that
\begin{equation}
\label{BMW15}
s_{i+1} s_i^{-1} s_{i+1} = -z s_{i+1}^2 + s_{i+1} s_i s_{i+1} - s_i^{-1} s_{i+1} s_i^{-1} + s_i^{-1} s_{i+1}^{-1} s_i^{-1} + z s_{i}^{-2}
\end{equation}
and, applying the anti-automorphism $\eta_n$ from (\ref{eta}),
\begin{equation}
\label{BMW16}
s_{i+1}^{-1} s_i s_{i+1}^{-1} = z s_{i+1}^{-2} + s_{i+1}^{-1} s_i^{-1} s_{i+1}^{-1} - s_i s_{i+1}^{-1} s_i + s_i s_{i+1} s_i - z s_{i}^{2}.
\end{equation}

\subsection{Common properties and interplay}\label{interplay}

We will now list a few properties that the Hecke and BMW algebras are sharing. So here we will make some statements for a generic algebra $A_n$ with generators $g_i$'s, which are true when $A_n$ is equal to $H_n$ (and hence $g_i = \s_i$) or to $BMW_n$ (and hence $g_i = s_i$).

The family $\left( A_n \right)_{n \geq 1}$ naturally forms a tower of algebras. Let us denote by $\iota_n : A_n \rightarrow A_{n+1}$ the algebra embedding that sends $g_i$ to $g_i$ for $i = 1, \dots, n-1$.

Let us consider, for any $n \geq 2$, the morphism $sh: \iota_n \left( A_n \right) \rightarrow A_{n+1}$ defined by $sh(g_{i_1}^{\epsilon_1} \cdots g_{i_k}^{\epsilon_k} )= g_{i_1 +1}^{\epsilon_1} \cdots g_{i_k+1}^{\epsilon_k}$. Note that for any $u \in \iota_n \left( A_n \right)$ the elements $u$ and $sh(u)$ are conjugate in $A_{n+1}$. More generally, for any $m=1, \dots, n-2$, let $sh^m: \iota_{n-1} \dots \iota_{n-m} \left( A_{n-m} \right) \rightarrow A_{n}$ be the morphism defined by $sh^m(g_{i_1}^{\epsilon_1} \cdots g_{i_k}^{\epsilon_k} )= g_{i_1 +m}^{\epsilon_1} \cdots g_{i_k+m}^{\epsilon_k}$.

Let $n_1,n_2\in\Z_{>0}$. The algebra $A_{n_1}\otimes A_{n_2}$ is canonically a subalgebra of $A_{n}$ with $n=n_1+n_2$: $A_{n_1}$ is identified with the subalgebra $ \iota_{n-1} \dots \iota_{n_1} \left( A_{n_1} \right) $ of $A_n$ generated by $g_1,\dots,g_{n_1-1}$, while $A_{n_2}$ is identified with the subalgebra $ sh^{n_1}\iota_{n-1} \dots \iota_{n_2} \left( A_{n_2} \right) $ of $A_n$ generated by $g_{n_1+1},\dots,g_{n_1+n_2-1}$. These two subalgebras commute. Note finally that two elements $v_1 \ot v_2 \in A_{n_1}\otimes A_{n_2}$ and $v_2 \ot v_1 \in A_{n_2}\otimes A_{n_1}$ are conjuguate in $A_n$. Indeed $v_1 \ot v_2 = u(v_2 \ot v_1)u^{-1}$ with $u = (g_{n_1} \dots g_{n-1})(g_{n_1-1} \dots g_{n-2}) \dots (g_{1} \dots g_{n_2})$ as a direct consequence of the braid relations.

The Hecke algebra $H_{n+1}$ can be decomposed as $R$-module as follows:
\begin{equation}
H_n \oplus (H_n \ot_{H_{n-1}} H_n) \cong H_{n+1}.
\label{decHecke}
\end{equation}
The isomorphism is given by $(u, v\ot w) \mapsto \iota_n(u) + \iota_n(v) \s_n \iota_n(w)$. 

Note that here the behaviors of our two algebras start to deviate as such a decomposition does not exist for $BMW_n$. Its existence is obstructed by relations of the type $e_1s_2e_1 = a e_1$. This is a reason why the study of Markov traces on the BMW algebras is more complicated than the one on the Hecke algebras, and explains the importance of the bases of the BMW algebras constructed in the following subsection.

Finally let us recall how these two algebras are related to one another. The Hecke algebra $H_n$ is the quotient of the BMW algebra $BMW_n$ by the ideal $E$ generated by $\{ e_i =0, i=1, \dots, n-1\}$. The surjection is given by:
\begin{eqnarray}
q_E \, : \,BMW_n & \rightarrow & H_n \\ \label{surjBMWH}
s_i & \mapsto & \s_i. \nonumber
\end{eqnarray}
Actually it is enough to quotient by the single relation $e_1=0$, indeed it implies the others as $e_{i+1} = s_i s_{i+1} e_i s_{i+1}^{-1} s_i^{-1}$.

\subsection{Inductive bases of the BMW algebras}\label{subsec-bases}
Let us recall the following standard construction of an inductive basis $\mathbf{b}^{H}_{n+1}$ for the Hecke algebra $H_{n+1}$, starting from the basis $\mathbf{b}^{H}_{1}= \{ 1 \}$ of $H_{1}$:
\[ \{ \iota_n(b), \iota_n(b) \sigma_n \dots \sigma_i| b \in \mathbf{b}^{H}_{n}, i = 1, \dots, n \}.\]

Similarly, we will now construct inductively a basis $\mathbf{b}^{BMW}_{n+1}$ of $BMW_{n+1}$ starting from the basis $\mathbf{b}^{BMW}_{1}= \{ 1 \}$ of $BMW_{1}$. For $i = 1, \dots, n$, consider the following elements in $BMW_{n+1}$:
$$ x_{n,i} = s_n^{-1} \dots s_i^{-1} \quad \mbox{and} \quad y_{i,n} = s_i \dots s_n.$$

\begin{prop} \label{basisBMW}
The family 
$$\mathbf{b}^{BMW}_{n+1} = \{ \iota_n(b), \iota_n(b)x_{n,i}, y_{i,n}\iota_n(b) | b \in \mathbf{b}^{BMW}_{n}, i = 1, \dots, n \}$$
 forms a basis of $BMW_{n+1}$.
\end{prop}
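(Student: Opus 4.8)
The plan is to argue by induction on $n$, the base case $BMW_1=R$ being trivial, and to reduce the whole statement to a spanning assertion. First I would count: using $\dim_R BMW_m=(2m-1)(2m-3)\cdots 3\cdot 1$, the three families contribute $\dim_R BMW_n$, $n\dim_R BMW_n$ and $n\dim_R BMW_n$ elements, so that $|\mathbf{b}^{BMW}_{n+1}|=(2n+1)\dim_R BMW_n=\dim_R BMW_{n+1}$. Since $R$ is a commutative Noetherian ring and $BMW_{n+1}$ is free of rank $\dim_R BMW_{n+1}$ (being a flat deformation of the Brauer algebra), a family of that cardinality is automatically a basis as soon as it spans: the induced surjection $R^{\dim}\twoheadrightarrow BMW_{n+1}\cong R^{\dim}$ is a surjective endomorphism of a finitely generated module, hence an isomorphism. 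Thus it suffices to prove that $M:=R\text{-span}(\mathbf{b}^{BMW}_{n+1})$ equals $BMW_{n+1}$. By the induction hypothesis $\{\iota_n(b)\}$ spans $A:=\iota_n(BMW_n)$, so $M=A+\sum_{i=1}^n Ax_{n,i}+\sum_{i=1}^n y_{i,n}A$ and in particular $1\in M$; moreover $s_n^{-1}=x_{n,n}$ and $s_n=y_{n,n}$ lie in $M$, whence by \eqref{defei} so does $e_n=z^{-1}(s_n^{-1}-s_n)+1$.

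The heart of the argument is to show that $M$ is a sub-bimodule over $A$. Set $M_x:=A+\sum_i Ax_{n,i}$ and $M_y:=A+\sum_i y_{i,n}A$, so that $M=M_x+M_y$; by construction $M_x$ is a left $A$-module and $M_y$ a right $A$-module. I would first establish that $M_x$ is also a right $A$-module. Since $AM_x=M_x$, a short induction on the length of a word $c\in A$ reduces the inclusion $M_x\,c\subseteq M_x$ to the single family of checks
\[ x_{n,i}\,s_j^{\pm1}\in M_x\qquad (1\le i\le n,\ 1\le j\le n-1). \]
The anti-automorphism $\eta_{n+1}$ of \eqref{eta} fixes $A$ and exchanges the two families, $\eta_{n+1}(Ax_{n,i})=y_{i,n}A$, so $\eta_{n+1}(M_x)=M_y$ and $\eta_{n+1}(M)=M$; applying $\eta_{n+1}$ to $M_xA\subseteq M_x$ then yields the left-module property $AM_y\subseteq M_y$. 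Combining these, $MA=M_xA+M_yA\subseteq M$ and $AM=AM_x+AM_y\subseteq M$, so $M$ is an $A$-bimodule.

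Once $M$ is an $A$-bimodule I would conclude as follows. Writing $x_{n,i}=s_n^{-1}(s_{n-1}^{-1}\cdots s_i^{-1})$ and $y_{i,n}=(s_i\cdots s_{n-1})s_n$ with the parenthesised factors in $A$ shows $M\subseteq A+As_nA+As_n^{-1}A$, while the bimodule property together with $s_n^{\pm1}\in M$ gives the reverse inclusion, so $M=A+As_nA+As_n^{-1}A$. As $BMW_{n+1}$ is generated by $A$ and $s_n^{\pm1}$, it remains to check that $M$ is stable under left multiplication by $s_n^{\pm1}$; since $M$ is a right $A$-module this reduces to the finite verification $s_n^{\pm1}A\,s_n^{\pm1}\subseteq M$. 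Here I would expand $c\in A$ in its inductive form coming from $\mathbf{b}^{BMW}_n$, so that $c$ is, up to factors from the commuting subalgebra $\langle s_1,\dots,s_{n-2}\rangle$, one of $1$, $s_{n-1}^{-1}\cdots s_k^{-1}$ or $s_k\cdots s_{n-1}$. For the commuting part one uses $s_n c s_n^{\pm1}=c\,s_n^{\pm1}s_n$ and the quadratic relations \eqref{BMW8}--\eqref{BMW11} to reduce $s_n^{\pm2}$ to combinations of $1,s_n,e_n\in M$; the interaction with the single $s_{n-1}^{\pm1}$ is resolved by the braid relation $s_ns_{n-1}s_n=s_{n-1}s_ns_{n-1}$ and by \eqref{BMW15}--\eqref{BMW16}, whose right-hand sides are sums of triple products lying in $As_n^{\pm1}A\subseteq M$. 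Hence $M$ is a subalgebra containing $1$ and all generators, so $M=BMW_{n+1}$, which completes the spanning and therefore the proof.

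The main obstacle is manifestly the family of checks $x_{n,i}s_j^{\pm1}\in M_x$ of the second paragraph. Sliding $s_j^{\pm1}$ leftward through the descending word $x_{n,i}=s_n^{-1}\cdots s_i^{-1}$ by the braid relations is harmless until $s_j^{\pm1}$ meets its inverse; there a quadratic relation \eqref{BMW10} or \eqref{BMW11} is forced and produces an $e_j$-term, hence ``wrong-handed'' products such as $s_n^{-1}e_{n-1}$ or $x_{n,i}e_i$ that are not visibly of the form $A\,x_{n,k}$. Showing that these fall back into $M_x$ is precisely where the relations \eqref{BMW3}, \eqref{BMW4'} and \eqref{BMW12}, together with \eqref{BMW5}--\eqref{BMW6}, must be brought to bear, and I expect this to require a secondary induction arranged so that each rewriting strictly lowers the index at which the top letter $s_n^{-1}$ interacts, thereby guaranteeing that the reduction terminates inside $M_x$.
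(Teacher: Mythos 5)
Your reduction to a spanning statement via the cardinality count is fine and matches the paper's first step, but the route you then take breaks down at exactly the point you flag as "the main obstacle", and not merely because the verification is left undone: the intermediate claim that $M_x=A+\sum_i Ax_{n,i}$ is a right $A$-module is \emph{false}. Take $n=2$, so $A=\iota_2(BMW_2)$, $x_{2,1}=s_2^{-1}s_1^{-1}$, $x_{2,2}=s_2^{-1}$, and consider $x_{2,2}s_1=s_2^{-1}s_1=s_2^{-1}s_1^{-1}+zs_2^{-1}-zs_2^{-1}e_1$. The first two terms lie in $M_x$, but $s_2^{-1}e_1$ does not. Indeed, let $I=\ker q_E\subset BMW_3$ (the span of the tangles with one through strand, of dimension $15-6=9$). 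The six elements $b,\,bx_{2,1},\,bx_{2,2}$ with $b\in\{1,s_1\}$ together with $1\cdot x_{2,1}$ map to a basis of $H_3$, so $M_x\cap I$ is exactly the $3$-dimensional span of $e_1,\ e_1x_{2,1},\ e_1x_{2,2}$. These three elements are (reduced) lifts of the three Brauer pairings whose bottom pair is $\{1,2\}$, while $s_2^{-1}e_1\in I$ is a lift of the pairing with bottom pair $\{1,3\}$; by the very theorem of Morton underlying this whole circle of ideas, lifts of distinct pairings are linearly independent, so $s_2^{-1}e_1\notin M_x\cap I$ and hence $s_2^{-1}e_1\notin M_x$. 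Consequently no "secondary induction" can push $x_{n,i}s_j^{\pm1}$ back into $M_x$ alone: the $e_j$-terms produced by the quadratic relations genuinely require the $y$-family, i.e.\ the cross terms between $M_x$ and $M_y$ are unavoidable, and your plan of proving the one-sided module properties separately and then combining them by $\eta_{n+1}$ cannot be repaired as stated.

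For comparison, the paper sidesteps this algebraic rewriting entirely: it invokes Morton's result that any choice of tangle lifts of the Brauer pairings of $\mathbf{b}^{Br}_{n+1}$ is a basis of $BMW_{n+1}$, and then exhibits, for each pairing, one lift that visibly expands in $\mathbf{b}^{BMW}_{n+1}$. The case causing your trouble (last bottom point paired with a bottom point \emph{and} last top point paired with a top point) is handled there by writing the lift as $y_{i,n-1}\,b\,e_n\,x_{n-1,j}$ and expanding $e_n$ by \eqref{defei}, which produces precisely a mixture of an $A$-term, an $Ax_{n,j}$-term and a $y_{i,n}A x_{n-1,j}$-term --- i.e.\ it uses both families at once. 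If you want to keep a purely algebraic proof, you would need to work with $M=M_x+M_y+$ (products $y_{i,n-1}Ax_{n,j}$ reduced via the induction hypothesis on $BMW_{n-1}$) from the outset, which essentially reconstructs the paper's three-case analysis.
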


\begin{rem}
In the sequel we will most of the time simply identify an element $b$ with its image $ \iota_n(b)$ and omit to write $\iota_n$. 
\end{rem}

\begin{proof}
Since this family $\mathbf{b}^{BMW}_{n+1}$ has the good cardinality, we only need to prove that it generates $BMW_{n+1}$. We will prove it recursively. It is true for $\mathbf{b}^{BMW}_{1} = \{ 1 \}$.

In \cite{MorBMW}, it is shown that one can obtain a basis of $BMW_{n+1}$ from the basis $\mathbf{b}^{Br}_{n+1}$ of the Brauer algebra $Br_{n+1}$. Diagrammatically, one just lifts a pairing in $\mathbf{b}^{Br}_{n+1}$ to a tangle by replacing its crossings by indifferently positive or negative crossings. Morton proved that any family of tangles obtained in this way forms a basis of $BMW_{n+1}$.

Hence we just need to show that for any pairing in $\mathbf{b}^{Br}_{n+1}$, there exists a tangle which lifts this pairing and which can be written as a sum of elements of $\mathbf{b}^{BMW}_{n+1}$. Since we will do some diagrammatical reasoning, let us precise that one will read diagrams from bottom to top and that the corresponding element of the BMW algebra is read from left to right. Take a pairing $p$ in $\mathbf{b}^{Br}_{n+1}$, we will distinguish three cases:
\begin{itemize}
\item If the $n+1$st bottom point of $p$ is paired with the $i$th top point for some $i \in \{ 1, \dots , n+1\}$, then we choose a tangle lift of $p$ so that this strand is above the rest of the diagram. This means that this lift can be written $b$ if $i = n+1$ or $b x_{n,i}$ otherwise, with $b$ in $BMW_n$ so in particular, by induction, $b$ can be written as a sum of elements of $\mathbf{b}^{BMW}_{n}$. Thus we have constructed a lift of $p$ that is a sum of elements of $\mathbf{b}^{BMW}_{n+1}$.
\item If the $n+1$st top point of $p$ is paired with the $i$th bottom point for some $i \in \{ 1, \dots , n\}$, we do the exact same reasoning as in the previous case. Pick a tangle lift of $p$ for which this strand is above the rest of the diagram, i.e. it can be written $y_{i,n} b$, with $b \in BMW_n$ being by induction a sum of elements of $\mathbf{b}^{BMW}_{n}$ so that this lift of $p$ is expressed as a sum of elements of $\mathbf{b}^{BMW}_{n+1}$.
\item Otherwise, this means that the $n+1$st bottom point of $p$ is paired with the $i$th bottom point and the $n+1$st top point of $p$ is paired with the $j$th top point for some $i,j \in \{ 1, \dots , n\}$. Let us then lift $p$ to a tangle for which these two strands are above the rest of the diagram, meaning that this lift $t$ is of the form $s_i \dots s_{n-1} b e_n s_{n-1}^{-1} \dots s_{j}^{-1}$ with $b$ in $BMW_{n-1}$. By definition \eqref{defei} of $e_n$, we then have:
\begin{eqnarray*}
t & = & s_i \dots s_{n-1} b s_{n-1}^{-1} \dots s_{j}^{-1} + z^{-1} s_i \dots s_{n-1} b s_n^{-1} s_{n-1}^{-1} \dots s_{j}^{-1} \\
& & - z^{-1} s_i \dots s_{n-1} b s_n s_{n-1}^{-1} \dots s_{j}^{-1} \\
& = & s_i \dots s_{n-1} b s_{n-1}^{-1} \dots s_{j}^{-1} + z^{-1} y_{i,n-1} b x_{n,j} - z^{-1} y_{i,n} b x_{n-1,j} .\\
\end{eqnarray*}
This allows us to conclude using the induction hypothesis on $b \in BMW_{n-1}$ (as $b$ being a sum of elements of $\mathbf{b}^{BMW}_{n-1}$ implies that the last two terms are a sum of elements of $\mathbf{b}^{BMW}_{n+1}$) and on the first term which belongs to $BMW_n$.
\end{itemize}
\end{proof}

\begin{rems}
We observe the following:
\begin{itemize}
	\item[$\bullet$] All these basis elements are Mikado braids \cite{DiGo}.
  \item[$\bullet$] This basis does not specialize to a basis of the Brauer algebra $Br_n$ at $a=q=1$. 
\item If we apply to the basis $\mathbf{b}^{H}_{n+1}$ the algebra anti-automorphism $\sigma_i\mapsto\sigma_i$, or the ring anti-automorphism analogous to $\eta_{n+1}$ in (\ref{eta}), or both, we obtain three other inductive basis of $H_{n+1}$ of the following form:
$$ \{ \iota_n(b), \sigma_i \dots\sigma_n\iota_n(b)| b \in \mathbf{b}^{H}_{n}, i = 1, \dots, n\},$$
$$\{ \iota_n(b), \sigma^{-1}_i\dots\sigma^{-1}_n\iota_n(b)| b \in \mathbf{b}^{H}_{n}, i = 1, \dots, n\},$$
$$\{ \iota_n(b), \iota_n(b)\sigma^{-1}_n\dots\sigma^{-1}_i| b \in \mathbf{b}^{H}_{n}, i = 1, \dots, n\}.$$
\item We remark that the basis $\mathbf{b}^{BMW}_{n+1}$ of $BMW_{n+1}$ is invariant by the ring anti-automorphism $\eta_{n+1}$ in (\ref{eta}). If we apply the algebra anti-automorphism $s_i\mapsto s_i$, we obtain another inductive basis, also invariant by $\eta_{n+1}$, of the form
\[ \{ \iota_n(b), s_i^{-1}\dots s_n^{-1}\iota_n(b), \iota_n(b)s_n\dots s_i| b \in \mathbf{b}^{BMW}_{n}, i = 1, \dots, n\} .\]
\end{itemize}
\end{rems}

\begin{nota}
In the sequel, we will denote
$$ x_{j,i} = s_j^{-1} \dots s_i^{-1} \quad \mbox{and} \quad y_{i,j} = s_i \dots s_j$$
for $1 \leq i \leq j \leq n$. 

We also use the convention that, if $ i > j$, than $x_{j,i} = y_{i,j} = 1$.
\end{nota}

\section{Classical Markov traces}

\subsection{Definition}

A Markov trace on a tower of algebras $\left( A_n \right)_{n \geq 1}$ over $R$ is a family $(t_n)_{n\geq 1}$ of $R$--linear maps $t_n : A_n \rightarrow R$ satisfying the following conditions:
\begin{itemize}
\item trace property: $t_n(uv) = t_n(vu)$,
\item stabilization property: $t_{n+1} \left( \iota_n(u) g_n^{\pm 1} \iota_n(v) \right) = a^{\pm 1} t_n(uv)$.
\end{itemize}
for all $u,v \in A_n$ and $n\geq 1$. If furthermore, the family $(t_n)_{n\geq 1}$ satisfies the
\begin{itemize}
\item multiplicativity property: $t_{n_1+n_2} \left( u_1 \ot u_2 \right) = t_{n_1} \left( u_1 \right)t_{n_2} \left(u_2 \right)$, for all $u_1 \in A_{n_1}, \ u_2 \in A_{n_2}$,
\end{itemize}
the Markov trace is said multiplicative.

We can also consider the
\begin{itemize}
\item inclusion property: $t_{n+1} \left( \iota_n(u) \right) = d t_n(u)$, for some $d \in R$,
\end{itemize}
which is weaker as any multiplicative trace satisfies the inclusion property with $d=t_1(1)$. But for $d=t_1(1)$ and $A_n = H_n$ or $BMW_n$, they are equivalent. To prove the converse in this case, we consider two basis elements $u_1 \in A_{n_1}$ and $u_2 \in A_{n_2}$ with $n = n_1 + n_2$ (and it is enough to do so since a trace is linear). There exist $\epsilon \in \{-1,0, 1\}$ and $v_2, w_2 \in A_{n_2-1}$ such that $u_2 = v_2g_{n-1}^{\epsilon}w_2$. This is a direct consequence of \eqref{decHecke} for the Hecke algebra and of Proposition \ref{basisBMW} for the BMW algebra. Then we proceed by induction. The two properties are clearly equivalent when $n_1 + n_2 =2$, and for $n>2$, it follows immediately from the induction hypothesis and the stabilization property (if $\epsilon = \pm 1$) or inclusion property (if $\epsilon = 0$) that
\begin{eqnarray*}
t_n(u_1 \ot u_2) & =& d^{\overline{1-\epsilon}}a^{\epsilon}t_{n-1}(u_1 \ot v_2w_2)\\
& = & d^{\overline{1-\epsilon}}a^{\epsilon} t_{n_1} \left( u_1 \right)t_{n_2-1} \left(v_2w_2 \right)\\
& = & t_{n_1} \left( u_1 \right)t_{n_2} \left(u_2 \right)
\end{eqnarray*}
where $\overline{1-\epsilon}$ is the remainder of $1 - \epsilon$ modulo $2$.

Observe that while the set of traces on $\left( A_n \right)_{n \geq 1}$ is an $R$-module, the subset of multiplicative traces is not a submodule of it.

\subsection{Markov traces and HOMFLY--PT polynomial}\label{subsec-HOMFLY}

It is a well-known fact that the space of Markov traces on the tower of algebras $\left( H_n \right)_{n \geq 1}$ is of dimension one, see \cite{Jo2} or \cite[Chap 4.5]{GePf}.

Indeed, one observes, from the decomposition \eqref{decHecke}, that the trace of any element of $H_{n+1}$ can be computed by descending induction. For any $u,v,w \in H_n$, one has $t_{n+1}(v\sigma_n w ) = a t_n (vw)$ and, by \eqref{H5}, 
$$t_{n+1}(u) = t_{n+1}(z^{-1} u\sigma_n - z^{-1} u\sigma_n^{-1}) =z^{-1}(a-a^{-1}) t_{n}(u)= \delta^H t_{n}(u) .$$
This implies that the only free parameter is $t_1(1)$ and that any trace necessarily satisfies the inclusion property with $d = \delta^H$. Hence there exists, up to normalization (i.e. the value $t_1(1)$), only one Markov trace on the tower of Hecke algebras.

For the choice of normalization $t_1(1)=\delta^H$, the corresponding Markov trace is multiplicative, let us denote it by $t_n^H$ as it is the one recovering the HOMFLY-PT polynomial. If $L$ is a link with writhe $w(L)= n_+ - n_-$ such that $L$ is the closure $\widehat{\bu}$ of a $n$-strands braid $\bu$, and if $\overline{\pi}$ is the projection from the group algebra of the $n$-strands braid group to $H_n$, then:
\begin{equation}
 P(L)(a,q) = a^{-w(L)} t_n^H(\overline{\pi} ( \bu))
\label{HOMFLYtH}
\end{equation}
where $P(L)(a,q)$ is the 2-variable HOMFLY--PT polynomial of the oriented link $L$. This polynomial is uniquely determined by its normalization, ie. its value on the trivial knot
$$P\left( \begin{array}{c}  \includegraphics[height=0.55cm]{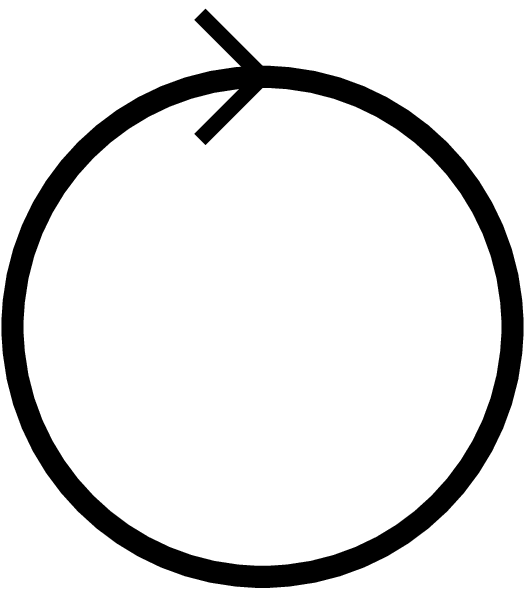} 
\end{array} \right)=\frac{a-a^{-1}}{q-q^{-1}},$$
and the following skein relation:
$$a P\left( \begin{array}{c}   \includegraphics{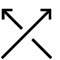} \end{array} 
\right)-a^{-1}P \left( \begin{array}{c}   \includegraphics{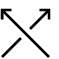} 
\end{array} \right) = (q-q^{-1})P\left( \begin{array}{c}   
\includegraphics[height=0.55cm]{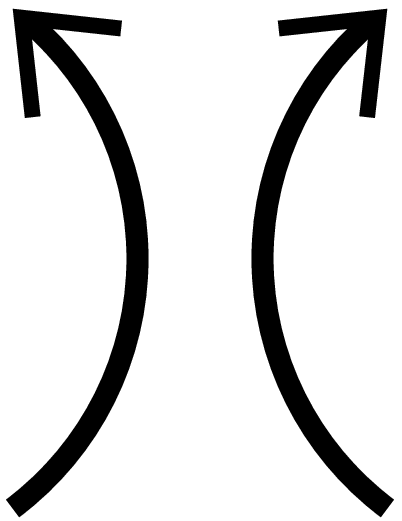} \end{array} \right).$$

\subsection{Markov traces and Kauffman polynomial}\label{subsec-Kauffman}

All the results of this section are well-known. The only aim here is to reprove, using the inductive basis $\mathbf{b}^{BMW}_{n}$ constructed previously, the unicity, up to normalization, of Markov traces satisfying the inclusion property with $d=\delta$ on the tower of BMW algebras $\left( BMW_n \right)_{n \geq 1}$.

In the sequel, we will repeatedly use the fact that
\begin{eqnarray*}
z \delta & = & z + a - a^{-1} \\
z \delta a^{-1} & = & za^{-1} + 1 - a^{-2} \\
z \delta a & = & za + a^2 - 1.
\end{eqnarray*}

Let us start with a preliminary lemma. Let $(t_n)_{n\geq1}$ be a family of $R$-linear maps with $t_n : BMW_n\to R$.
\begin{lem}
 Assume that $(t_n)_{n\geq 1}$ satisfies the stabilization and inclusion (with $d=\delta$) properties. For $n\geq 2$, we have
\begin{multline}\label{lem-eq1}
t_{n+1}(us_{n-1}^kvs_n^lw)=t_{n+1}(us_{n-1}^lvs_n^kw)\\
=t_{n+1}(us_{n}^kvs_{n-1}^lw)=t_{n+1}(us_{n}^lvs_{n-1}^lw),
\end{multline}
\begin{equation}\label{lem-eq2}
t_{n+1}(xs_n^{\epsilon}s_{n-1}^{-\epsilon}s_n^{\epsilon}y)= \epsilon z t_{n+1}(xs_n^{2\epsilon}y)+a^{\epsilon} t_n(xs_{n-1}^{2\epsilon}y)+ \epsilon z t_n(xs_{n-1}^{-2\epsilon}y),
\end{equation}
\begin{equation}\label{lem-eq3}
t_{n+1}(us_n^{\epsilon}s_{n-1}^{-\epsilon}s_n^{\epsilon} v)=a^{-\epsilon} t_n(us_{n-1}^{2\epsilon}v),
\end{equation}
where $k,l\in\mathbb{Z}$, $\epsilon= \pm 1$, $u,v,w\in BMW_{n-1}$ and $x,y\in BMW_n$.
\end{lem}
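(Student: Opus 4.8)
My plan is to prove the three displayed identities in a fixed order, since each builds on the relations \eqref{BMW7}--\eqref{BMW11} and \eqref{BMW15}--\eqref{BMW16} together with the two hypotheses (stabilization and inclusion with $d=\delta$). I would begin with \eqref{lem-eq1}, the symmetry statement. The key observation is that by the discussion of Subsection \ref{interplay}, for $u,v,w \in BMW_{n-1}$ the two-strand block $s_{n-1}^k v s_n^l$ lives in a copy of $BMW_2 \otimes BMW_2$-type configuration where $s_{n-1}$ and $s_n$ play symmetric roles once one conjugates by the half-twist element. Concretely, the trace property $t_{n+1}(ab)=t_{n+1}(ba)$ lets me move the outer factors $u,w$ around, and a conjugation by the element $u$ from the interplay subsection (which exchanges the two commuting tensor factors) swaps the roles of $s_{n-1}$ and $s_n$. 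Since the trace is invariant under conjugation (being a trace), this yields all four equalities simultaneously. The main care here is bookkeeping: I must check that the conjugating element genuinely interchanges $s_{n-1}^k$ with $s_n^k$ while fixing $u,v,w$ up to conjugation, which follows from the braid relations alone and so holds in $BMW_{n+1}$.

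For \eqref{lem-eq2} I would rewrite $s_n^{\epsilon} s_{n-1}^{-\epsilon} s_n^{\epsilon}$ using the topological identities \eqref{BMW13}--\eqref{BMW16}. Take $\epsilon = +1$ first: relation \eqref{BMW15} expresses $s_{n}s_{n-1}^{-1}s_n$ (after shifting indices $i \mapsto n-1$, so $s_{i+1}=s_n$, $s_i = s_{n-1}$) as a linear combination $-z s_n^2 + s_n s_{n-1} s_n - s_{n-1}^{-1} s_n s_{n-1}^{-1} + s_{n-1}^{-1}s_n^{-1}s_{n-1}^{-1} + z s_{n-1}^{-2}$. Inserting this between $x$ and $y$ and applying $t_{n+1}$ termwise, I then reduce each term: the braid-relation terms $s_n s_{n-1}s_n = s_{n-1}s_n s_{n-1}$ and $s_{n-1}^{-1}s_n s_{n-1}^{-1}=s_n^{-1}s_{n-1}s_n^{-1}$ are handled by stabilization (peeling off the outermost $s_n^{\pm1}$ gives a factor $a^{\pm 1}$ and drops to $t_n$), while the terms $s_n^2$, $s_{n-1}^{-2}$ are treated directly by stabilization applied to the $s_n$ factor. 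Collecting the surviving terms and using $u,v,w\in BMW_{n-1}$ so that they commute past $s_n$ should produce exactly the right-hand side $\epsilon z\, t_{n+1}(xs_n^{2\epsilon}y) + a^{\epsilon} t_n(x s_{n-1}^{2\epsilon} y) + \epsilon z\, t_n(x s_{n-1}^{-2\epsilon} y)$. The case $\epsilon=-1$ is identical but uses \eqref{BMW16} in place of \eqref{BMW15} (equivalently, apply $\eta_{n+1}$).

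Finally, \eqref{lem-eq3} should follow from \eqref{lem-eq2} by a second application of the quadratic-type relations: I would re-expand $s_n^{2\epsilon}$ and $s_{n-1}^{\pm 2\epsilon}$ on the right-hand side of \eqref{lem-eq2} via \eqref{BMW8}/\eqref{BMW10} (or \eqref{BMW9}/\eqref{BMW11}), apply stabilization and inclusion to each resulting piece, and check that the $z$-terms cancel, leaving only $a^{-\epsilon} t_n(x s_{n-1}^{2\epsilon} y)$ after restricting to $u,v \in BMW_{n-1}$. The main obstacle I anticipate is precisely this final cancellation in \eqref{lem-eq3}: it requires the linear relations $z\delta = z + a - a^{-1}$, $z\delta a^{-1} = za^{-1}+1-a^{-2}$ and $z\delta a = za + a^2 - 1$ highlighted just before the lemma, and one must be scrupulous that the inclusion property (applied when a generator disappears) is invoked with the correct factor $d=\delta$ rather than stabilization. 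Keeping track of which terms drop by stabilization (factor $a^{\pm1}$) versus by inclusion (factor $\delta$) is the delicate point, and the three auxiliary identities are evidently tailored to make the arithmetic close up.
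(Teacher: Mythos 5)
Your treatments of \eqref{lem-eq2} and \eqref{lem-eq3} follow the paper's route (expand $s_n^{\epsilon}s_{n-1}^{-\epsilon}s_n^{\epsilon}$ via \eqref{BMW15}--\eqref{BMW16}, peel off $s_n^{\pm1}$ by stabilization, and close the arithmetic with $z\delta=z+a-a^{-1}$ and the quadratic relations \eqref{BMW9}, \eqref{BMW11}), so that part is essentially right. The problem is your argument for \eqref{lem-eq1}, which has two genuine flaws. First, it is circular: the lemma assumes only stabilization and inclusion --- the trace property is \emph{not} a hypothesis, precisely because this lemma is a stepping stone towards Proposition \ref{stab+inc=trace}, whose entire content is that stabilization and inclusion imply the trace property. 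Invoking ``the trace is invariant under conjugation'' at level $n+1$ therefore assumes what the lemma exists to help prove. Second, even granting a trace property, the conjugation you describe does not exist: $s_{n-1}$ and $s_n$ are \emph{adjacent} generators, so they do not sit in commuting tensor factors of the kind discussed in Subsection \ref{interplay} (that discussion concerns distant generators only); the half-twist $s_ns_{n-1}s_n$, which does swap $s_{n-1}$ and $s_n$, fails to commute with $u,v,w\in BMW_{n-1}$, and $v$ sits between the two powers and does not commute with $s_{n-1}$.

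The paper's actual argument for \eqref{lem-eq1} is short and uses only the stated hypotheses: for every $N\in\Z$ one has $s_i^N=\alpha_N+\beta_N s_i+\gamma_N s_i^{-1}$ with $\alpha_N,\beta_N,\gamma_N\in R$ independent of $i$ (iterate \eqref{BMW9} and \eqref{BMW11}). Expanding the $s_n$-power and applying stabilization to the $s_n^{\pm1}$ terms and inclusion to the constant term, then repeating one level down for the $s_{n-1}$-power, gives
\[
t_{n+1}\left(us_{n-1}^kvs_n^lw\right)=\left(\alpha_l\delta+\beta_l a+\gamma_l a^{-1}\right)\left(\alpha_k\delta+\beta_k a+\gamma_k a^{-1}\right)t_{n-1}(uvw),
\]
and the same scalar arises for all four arrangements, so the symmetry in $k$, $l$ and in the placement of the two generators is manifest. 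You should replace your conjugation argument with this expansion. (A minor warning on \eqref{lem-eq2}: carrying out the computation with \eqref{BMW15} actually produces the coefficient $-\epsilon z$ on the $t_{n+1}\left(xs_n^{2\epsilon}y\right)$ term, which is the form used later in Case $(4)$ of the proof of Proposition \ref{stab+inc=trace}; so your claim that the terms collect to ``exactly the right-hand side'' as printed needs to be checked against that sign.)
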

\begin{proof}
Note that for any $N\in\mathbb{Z}$, we have $s_i^N=\alpha_N+\beta_N s_i+\gamma_N s_i^{-1}$ with $\alpha_N,\beta_N,\gamma_N\in R$ independent of $i$. Then it is immediate to obtain \eqref{lem-eq1} by applying the stabilization and the inclusion properties.

Formulas \eqref{lem-eq2} are straightforward short calculations using Relations \eqref{BMW15} if $\epsilon = 1$ and \eqref{BMW16} if $\epsilon = -1$ and $\delta z=a-a^{-1}+z$.

To obtain \eqref{lem-eq3}, note first that $t_{n+1}(us_n^{2\epsilon}v)=t_{n+1}(u s_{n-1}^{2\epsilon}v)=\delta t_n(us_{n-1}^{2\epsilon}v)$ using \eqref{lem-eq1}. To conclude, it is enough to note that $t_n(us_{n-1}^{2\epsilon}v=t_n(us_{n-1}^{-2\epsilon}v)$ which follows easily from Relations \eqref{BMW9} and \eqref{BMW11}.
\end{proof}

Now we are ready to prove the main technical proposition.
\begin{prop}\label{stab+inc=trace}
Stabilization and inclusion (with $d=\delta$) properties imply the trace property.
\end{prop}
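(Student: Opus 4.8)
The plan is to prove the trace property by induction on $n$, the statements for $BMW_1$ and $BMW_2$ being trivial since these algebras are commutative. For the inductive step I would first reduce the full trace property on $BMW_{n+1}$ to commuting a single generator past an arbitrary element. Since every element of the basis $\mathbf{b}^{BMW}_{n+1}$ of Proposition \ref{basisBMW} is a (Mikado) braid, hence invertible and equal to a word in the $s_i^{\pm 1}$, the algebra $BMW_{n+1}$ is spanned by such words; a straightforward induction on word length then shows that the identities $t_{n+1}(s_i^{\pm 1}w)=t_{n+1}(ws_i^{\pm 1})$, for all $w\in BMW_{n+1}$ and all $i=1,\dots,n$, already force $t_{n+1}(uv)=t_{n+1}(vu)$ for all $u,v$. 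Moreover, again by Proposition \ref{basisBMW}, every $w\in BMW_{n+1}$ is an $R$-linear combination of elements of the three forms $\iota_n(p)$, $\iota_n(p)s_n\iota_n(r)$ and $\iota_n(p)s_n^{-1}\iota_n(r)$ with $p,r\in BMW_n$, so it is enough to test the commutation identities on these three forms.

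The case $i\le n-1$ would then be disposed of purely by the stabilization and inclusion hypotheses together with the induction hypothesis (the trace property on $BMW_n$). Indeed $s_i$ lies in $\iota_n(BMW_n)$, so on $\iota_n(p)$ one obtains $\delta\,t_n(s_ip)$ against $\delta\,t_n(ps_i)$, while on $\iota_n(p)s_n^{\pm1}\iota_n(r)$ the stabilization property turns $t_{n+1}(s_i\,u)$ and $t_{n+1}(u\,s_i)$ into $a^{\pm1}t_n(s_ipr)$ and $a^{\pm1}t_n(prs_i)$ respectively; these agree by the trace property on $BMW_n$.

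The substance of the argument is the case $i=n$. On $u=\iota_n(p)$ both sides equal $a\,t_n(p)$ by stabilization, so the real work lies in the forms $u=\iota_n(p)s_n^{\pm1}\iota_n(r)$, where commuting $s_n$ produces two occurrences of $s_n$ separated by $\iota_n(r)$ (the identity for $s_n^{-1}$ being entirely analogous, corresponding to $\epsilon=-1$ below). The plan is to expand $p$ and $r$ according to their $s_{n-1}$-content using Proposition \ref{basisBMW} one level down, writing them as combinations of elements of $\iota_{n-1}(BMW_{n-1})$ and of $\iota_{n-1}(\cdot)s_{n-1}^{\pm1}\iota_{n-1}(\cdot)$; the inner factors commute with $s_n$ and may be pulled outside. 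When the central factor commutes with $s_n$ the two copies of $s_n$ collapse to $s_n^2$, which is linearized by \eqref{BMW9} and then removed by stabilization and inclusion. When the central factor is $s_{n-1}$ one uses the braid relation $s_ns_{n-1}s_n=s_{n-1}s_ns_{n-1}$ to again leave a single central $s_n$ amenable to stabilization.

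The genuine obstacle will be the pattern $s_ns_{n-1}^{-1}s_n$ (and its $\eta_{n+1}$-image $s_n^{-1}s_{n-1}s_n^{-1}$), which is \emph{not} a braid relation and is exactly where the BMW algebra departs from the Hecke algebra. This is precisely what the preliminary lemma handles: \eqref{lem-eq2} and \eqref{lem-eq3} evaluate the trace of such a pattern, reducing it to traces on $BMW_n$ and $BMW_{n-1}$, while \eqref{lem-eq1} supplies the symmetry between $s_{n-1}$ and $s_n$ needed to match the two sides of the commutation identity. Carrying this out for each combination of $s_{n-1}$-types of $p$ and $r$, and invoking the induction hypothesis on $BMW_n$ (and $BMW_{n-1}$) to identify the resulting lower traces, yields $t_{n+1}(s_nu)=t_{n+1}(us_n)$ and completes the induction. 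I expect the bookkeeping of these cases---keeping the two sides in a common normal form so that \eqref{lem-eq1}, \eqref{lem-eq2} and \eqref{lem-eq3} can be applied symmetrically---to be the main source of difficulty, the conceptual content being entirely localized in the $s_ns_{n-1}^{-1}s_n$ relation.
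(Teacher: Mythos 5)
Your plan follows the paper's proof essentially step for step: the same induction on $n$, the same reduction of the trace property to commuting a single generator $s_i$ past a basis element, the same easy disposal of the case $i\le n-1$ via stabilization/inclusion and the induction hypothesis, and for $i=n$ the same two-level expansion of the basis (Proposition \ref{basisBMW} applied at levels $n+1$ and $n$) leading to the patterns $s_n s_{n-1}^{\pm1}s_n$, resolved by the braid relation and by \eqref{lem-eq1}, \eqref{lem-eq2}, \eqref{lem-eq3}. What remains unwritten in your proposal is exactly the paper's explicit case-by-case bookkeeping (its cases $(1)$--$(6)$, with the subcases $i=j$, $i<j$, $i>j$), which is lengthy but introduces no further ideas beyond those you have identified.
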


\begin{proof}
The proof is by induction on $n$. The trace property is satisfied for $n=1$ and we suppose it holds for $t_k$ with $k\leq n$.

We only need to check that the trace property $t_{n+1}(uv) = t_{n+1}(vu)$ is satisfied for  $u \in  \mathbf{b}^{BMW}_{n+1}$ and  $v=s_i$ for $i=1, \dots, n$. The property will then be satisfied in general since it follows that it is true for an element of the form $v = s_i^k$ for all $k \in \Z$ (immediate if $k \geq 0$ and using Relation \eqref{BMW9} if $k < 0$) and hence also true for any $v=s_{i_1}^{\epsilon_1}\dots s_{i_k}^{\epsilon_k}$with $\epsilon_j = \pm 1$ because we can now move the generators $s_{i_k}, \dots,s_{i_1}$ and their inverses one by one from right to left.

The case $i \leq n-1$ is straightforward. Since $u \in  \mathbf{b}^{BMW}_{n+1}$, there exist $u_1, u_2 \in BMW_n$ and $\epsilon \in \{ 0, \pm 1\}$ such that $u = u_1 s_n^{\epsilon} u_2$. It follows from the induction hypothesis that, if $\epsilon = 0$,
\begin{multline*}
t_{n+1}(uv)  =  t_{n+1}(u_1 u_2 s_i) = \delta t_{n}(u_1 u_2 s_i) = \delta t_{n}(s_i u_1 u_2 ) \\
= t_{n+1}(s_i u_1 u_2 ) = t_{n+1}(vu)
\end{multline*}
and, if $\epsilon = \pm 1$,
\begin{multline*}
t_{n+1}(uv)  =  t_{n+1}\left(u_1 s_n^{\pm 1} u_2 s_i\right) = a^{\pm 1} t_{n}(u_1 u_2 s_i) = a^{\pm 1} t_{n}(s_i u_1 u_2 )\\
 = t_{n+1}(s_i u_1 s_n^{\pm 1} u_2 ) = t_{n+1}(vu).
\end{multline*}

The case where $u \in  \mathbf{b}^{BMW}_{n}$ and  $v=s_n$ is treated similarly:
\begin{equation*}
t_{n+1}(uv)  =  t_{n+1}(u s_n) = a t_{n}(u) = t_{n+1}(s_n u) = t_{n+1}(vu).
\end{equation*}

So it remains to treat the cases where $v = s_n$ and $u$ is of one of the six following possible forms:
\begin{itemize}
\item[(1)] $ u= t x_{n,i} $ with $t \in \mathbf{b}^{BMW}_{n-1}$ and $ i = 1, \dots ,n$,
\item[(2)] $ u=  y_{i,n} t $ with $t \in \mathbf{b}^{BMW}_{n-1}$ and $ i = 1, \dots ,n$,
\item[(3)] $ u= t x_{n-1,j} x_{n,i} $ with $t \in \mathbf{b}^{BMW}_{n-1}$, $ j = 1, \dots ,n-1$ and $ i = 1, \dots ,n$,
\item[(4)] $ u=  y_{i,n} t x_{n-1,j}$ with $t \in \mathbf{b}^{BMW}_{n-1}$, $ j = 1, \dots ,n-1$ and $ i = 1, \dots ,n$,
\item[(5)] $ u= y_{j,n-1} t x_{n,i} $ with $t \in \mathbf{b}^{BMW}_{n-1}$, $ j = 1, \dots ,n-1$ and $ i = 1, \dots ,n$,
\item[(6)] $ u=  y_{i,n} y_{j,n-1} t$ with $t \in \mathbf{b}^{BMW}_{n-1}$, $ j = 1, \dots ,n-1$ and $ i = 1, \dots ,n$.
\end{itemize}

$\bullet$ Case $(1)$: if $i=n$, the proof is trivial since
$$ us_n = t s_n^{-1}s_n = s_n t s_n^{-1} = s_n u.$$

While if $i<n$, one has on one hand
\begin{multline*}
t_{n+1}(us_n ) = t_{n+1}(t x_{n,i}s_n ) = t_{n+1}\left(t s_n^{-1} s_{n-1}^{-1}x_{n-2,i} s_n \right) 
\\ = t_{n+1}\left(t s_n^{-1} s_{n-1}^{-1} s_n x_{n-2,i}\right) =  t_{n+1}\left(t s_{n-1} s_n^{-1} s_{n-1}^{-1} x_{n-2,i}\right) \\
= a^{-1} t_{n}\left(t s_{n-1} s_{n-1}^{-1} x_{n-2,i}\right) =a^{-1} \delta t_{n-1}(t x_{n-2,i})
\end{multline*}
and on the other hand
\begin{multline*}
t_{n+1}(s_n u) = t_{n+1}(s_n t x_{n,i} ) = t_{n+1}\left(t s_n s_n^{-1} s_{n-1}^{-1} x_{n-2,i} \right) \\
 = \delta t_{n}\left(t s_{n-1}^{-1} x_{n-2,i}\right) =\delta  a^{-1} t_{n-1}(t x_{n-2,i}).
\end{multline*}

$\bullet$ Case $(2)$: if $i=n$, the proof is trivial since
$$ us_n = s_n t s_n = s_n^2 t = s_n u.$$

If $i<n$, one can use Relation \eqref{lem-eq1} to compute
\begin{multline*}
t_{n+1}(us_n ) = t_{n+1}(y_{i,n} t s_n ) = t_{n+1}\left(y_{i,n-2}s_{n-1}t s_n^2 \right) = t_{n+1}\left(y_{i,n-2}s_{n-1}^2t s_n \right) 
\end{multline*}
and 
\begin{multline*}
t_{n+1}(s_n u ) = t_{n+1}(s_n y_{i,n} t ) = t_{n+1}(y_{i,n-2}s_n s_{n-1} s_n t ) \\
= t_{n+1}(y_{i,n-2} s_{n-1} s_n s_{n-1} t ) = a t_{n}\left(y_{i,n-2} s_{n-1}^2 t \right)= t_{n+1}\left(y_{i,n-2} s_{n-1}^2 t s_n\right).
\end{multline*}

$\bullet$ Case $(3)$: if $i=n$, we have 
$$ t_{n+1} (u s_n) = t_{n+1} \left( t x_{n-1,j} s_n^{-1} s_n\right) = \delta t_{n} \left( t s_{n-1}^{-1}x_{n-2,j} \right) = \delta a^{-1} t_{n-1} ( t x_{n-2,j} ) $$
and 
$$ t_{n+1} (s_n u) = t_{n+1} \left(s_n t x_{n-1,j} s_n^{-1}\right) = \delta t_{n} \left( t s_n s_{n-1}^{-1} s_n^{-1} x_{n-2,j} \right) = \delta a^{-1} t_{n-1} ( t x_{n-2,j} ). $$

If $i<n$, one can see that
\begin{multline*}
t_{n+1} (u s_n) = t_{n+1} ( t x_{n-1,j} x_{n,i} s_n) = t_{n+1} \left( t x_{n-1,j} s_n^{-1} s_{n-1}^{-1} s_n x_{n-2,i} \right) \\
= t_{n+1} \left( t x_{n-1,j} s_{n-1} s_n^{-1} s_{n-1}^{-1} x_{n-2,i} \right) = a^{-1} t_{n} \left( t x_{n-1,j} s_{n-1} s_{n-1}^{-1} x_{n-2,i} \right) \\
= a^{-1} t_{n} \left( t s_{n-1}^{-1}  x_{n-2,j} x_{n-2,i} \right) = a^{-2} t_{n-1} ( t  x_{n-2,j} x_{n-2,i} )
\end{multline*}
and
\begin{multline*}
t_{n+1} (s_n u ) = t_{n+1} (s_n t x_{n-1,j} x_{n,i} ) = t_{n+1} \left( ts_n s_{n-1}^{-1} x_{n-2,j} s_n^{-1} x_{n-1,i} \right) \\
= t_{n+1} \left( ts_n s_{n-1}^{-1} s_n^{-1} x_{n-2,j} x_{n-1,i} \right) = t_{n+1} \left( t s_{n-1}^{-1} s_n^{-1} s_{n-1} x_{n-2,j} x_{n-1,i} \right) \\
= a^{-1} t_{n} \left( t s_{n-1}^{-1} s_{n-1} x_{n-2,j}  x_{n-1,i} \right) = a^{-1} t_{n} \left( t  x_{n-2,j} s_{n-1}^{-1} x_{n-2,i} \right) \\
= a^{-2} t_{n-1} ( t  x_{n-2,j} x_{n-2,i} ).
\end{multline*}

$\bullet$ Case $(4)$: if $i=n$, we obtain, using Relation \eqref{lem-eq3}, that
\begin{multline*}
 t_{n+1} (u s_n) = t_{n+1} ( s_n t x_{n-1,j}  s_n) = t_{n+1} \left(  t s_n s_{n-1}^{-1} s_n x_{n-2,j}\right)\\
 =a^{-1}t_n\left(  t s_{n-1}^{2} x_{n-2,j}\right)=t_{n+1}\left( s_n^{-1} t s_{n-1}^{2} x_{n-2,j}\right)\ .
\end{multline*}
Using Relation \eqref{lem-eq1}, we can see that also
\[ t_{n+1} (s_n u ) = t_{n+1} \left( s_n^2 t s_{n-1}^{-1}x_{n-2,j}\right) =t_{n+1} \left( s_n^{-1} t s_{n-1}^{2}x_{n-2,j}\right).\]

If $i<n$, we will proceed in two steps. First one can use Relation \eqref{lem-eq2} plus Relations \eqref{BMW9} and \eqref{BMW11} to check that
\begin{multline*}
t_{n+1} (u s_n) = t_{n+1} (y_{i,n} t x_{n-1,j}  s_n) =  t_{n+1} \left(y_{i,n-1} t s_n s_{n-1}^{-1} s_n x_{n-2,j} \right) \\
= -z  t_{n+1} \left(y_{i,n-1} t s_n^2 x_{n-2,j} \right) + at_{n} (y_{i,n-1} t s_{n-1}^2 x_{n-2,j} ) + z t_{n+1} \left(y_{i,n-1} t s_{n-1}^{-2} x_{n-2,j} \right) \\
=-z \big[(1-za^{-1}) \delta + (a^{-1} + z)a  -a^{-2} \big] t_{n} (y_{i,n-1} t  x_{n-2,j} ) \\
+a \big[ (1-za^{-1}) t_{n} (y_{i,n-1} t  x_{n-2,j} ) + (a^{-1} + z) t_{n} (y_{i,n-1} t s_{n-1} x_{n-2,j} ) \bigr. \\
\bigl.  -a^{-1}  t_{n} (y_{i,n-1} t  x_{n-1,j} ) \big] + z \big[ (1+za) t_{n} (y_{i,n-1} t  x_{n-2,j} ) \bigr. \\
\bigl.+ (a - z) t_{n} (y_{i,n-1} t  x_{n-1,j})   -a t_{n} (y_{i,n-1} t s_{n-1} x_{n-2,j} )    \big] \\
= \big[-za \left( (1-za^{-1}) \delta + (a^{-1} + z)a  -a^{-2} \right) + a^2(1-za^{-1}) \bigr. \\
\bigl. +za (1+za) \big] t_{n-1} (y_{i,n-2} t  x_{n-2,j} ) + \left( a(a^{-1} + z) -az \right)t_{n} (y_{i,n-1} t s_{n-1} x_{n-2,j} ) \\
+ \bigl( -1+z(a - z) \bigr) t_{n} (y_{i,n-1} t  x_{n-1,j} ) = \left( 1 + z^2 -za \right) t_{n-1} (y_{i,n-2} t  x_{n-2,j} ) \\
+ t_{n} (y_{i,n-1} t s_{n-1} x_{n-2,j} ) + \left( -1+za - z^2 \right) t_{n} (y_{i,n-1} t  x_{n-1,j} )
\end{multline*}
while, simply using Relation \eqref{BMW9}, we obtain that 
\begin{multline*}
t_{n+1} (s_n u ) = t_{n+1} (s_n y_{i,n} t x_{n-1,j} ) = t_{n+1} ( y_{i,n-2} s_n s_{n-1} s_n t x_{n-1,j}  ) \\
= t_{n+1} ( y_{i,n-2} s_{n-1} s_n s_{n-1} t x_{n-1,j}  ) =a  t_{n} \left( y_{i,n-2} s_{n-1}^2 t x_{n-1,j}  \right) \\
=a\left(1-za^{-1}\right)  t_{n} ( y_{i,n-2}  t x_{n-1,j}  ) + a\left(a^{-1} + z\right) t_{n} ( y_{i,n-1}  t x_{n-1,j}  ) \\
- t_{n} ( y_{i,n-2} s_{n-1}^{-1} t x_{n-1,j}  ) =\left(1-za^{-1}\right)  t_{n-1} ( y_{i,n-2}  t x_{n-2,j}  ) \\
+ (1 + za) t_{n} ( y_{i,n-1}  t x_{n-1,j}  ) - t_{n} \left( y_{i,n-2} s_{n-1}^{-1} t x_{n-1,j}  \right). 
\end{multline*}

Secondly, we will use the induction hypothesis and apply the trace property in order to simplify the four terms $t_{n-1} ( y_{i,n-2}  t x_{n-2,j}  )$, $t_{n} ( y_{i,n-1}  t x_{n-1,j}  )$, $t_{n} (y_{i,n-1} t s_{n-1} x_{n-2,j} )$ and $t_{n} \left( y_{i,n-2} s_{n-1}^{-1} t x_{n-1,j}  \right)$. We have to treat three different subcases depending on $i$ and $j$.

If $i=j$, we have immediately that
\begin{eqnarray*}
t_{n-1} ( y_{i,n-2}  t x_{n-2,i}  ) & = & t_{n-1} ( t  ) \\
t_{n} ( y_{i,n-1}  t x_{n-1,j}  ) & = & \delta t_{n-1} ( t  ) \\
t_{n} (y_{i,n-1} t s_{n-1} x_{n-2,i} ) & = &  t_{n} \left( s_{n-1}^2 t  \right) \\
& = & \left( \delta +za - za^{-1} \right) t_{n-1} ( t  ) \quad \text{by \eqref{BMW9}} \\
t_{n} \left( y_{i,n-2} s_{n-1}^{-1} t x_{n-1,i}  \right)  & = &  t_{n} \left( s_{n-1}^{-2} t  \right) \\
& = & \left( \delta +za - za^{-1} \right) t_{n-1} ( t  ) \quad \text{by \eqref{BMW11}.} 
\end{eqnarray*}
Hence it follows that
\begin{multline*}
t_{n+1} (u s_n) = \left( 1 + z^2 -za \right) t_{n-1} ( t  ) + \left( \delta +za - za^{-1} \right) t_{n-1} ( t  ) \\
 + \left( -1+za - z^2 \right) \delta t_{n-1} ( t  ) = a^2 \delta t_{n-1} ( t  )
\end{multline*}
and 
\begin{multline*}
t_{n+1} (s_n u ) = \left(1-za^{-1}\right)  t_{n-1} ( t  ) + (1 + za) \delta t_{n-1} ( t  ) \\
 - \left( \delta +za - za^{-1} \right) t_{n-1} ( t  ) = a^2 \delta t_{n-1} ( t  ). 
\end{multline*}

If $i < j$, one can check that 
\begin{eqnarray*}
t_{n-1} ( y_{i,n-2}  t x_{n-2,j}  ) & = & t_{n-1} ( x_{n-2,j} y_{i,n-2}  t   ) \\
& = & t_{n-1} ( y_{i,n-2} x_{n-3,j-1} t   ) \\
t_{n} ( y_{i,n-1}  t x_{n-1,j}  ) & = & t_{n} (x_{n-1,j}  y_{i,n-1}  t  ) \\
& = & t_{n} (  y_{i,n-1} x_{n-2,j-1} t  ) \\
& = & a t_{n-1} (  y_{i,n-2} x_{n-2,j-1} t  ) \\
& = & a t_{n-1} (  y_{i,j-2}t  ) \\
t_{n} (y_{i,n-1} t s_{n-1} x_{n-2,j} ) & = & t_{n} (s_{n-1} x_{n-2,j} y_{i,n-1} t  ) \\
& = & t_{n} (y_{i,n-1} s_{n-2} x_{n-3,j-1} t  ) \\
& = & a t_{n-1} \left(y_{i,n-3} s_{n-2}^2 x_{n-3,j-1} t  \right) \\
& = & (a-z) t_{n-1} (  y_{i,j-2}t  ) \\
& & + (1 + za)t_{n-1} ( y_{i,n-2} x_{n-3,j-1} t   ) \\
& &  - t_{n-1} ( y_{i,n-3} x_{n-2,j-1} t   ) \quad \text{by \eqref{BMW9}} \\
\end{eqnarray*}
and, using Relation \eqref{lem-eq2} plus Relations \eqref{BMW9} and \eqref{BMW11}, that
\begin{multline*}
t_{n} \left( y_{i,n-2} s_{n-1}^{-1} t x_{n-1,j}  \right) = t_{n} \left( y_{i,n-3} s_{n-1}^{-1} s_{n-2} s_{n-1}^{-1} x_{n-3,j-1}t  \right) \\
=  a^{-1} t_{n-1} \left( y_{i,n-3}s_{n-2}^{-2}  x_{n-3,j-1}t  \right) + z  t_{n} \left( y_{i,n-3}s_{n-1}^{-2}  x_{n-3,j-1}t  \right) \\
-z t_{n-1} \left( y_{i,n-3} s_{n-2}^{2} x_{n-3,j-1}t  \right)\\
= a^{-1} t_{n-1} \left( y_{i,n-3}s_{n-2}^{-2}  x_{n-3,j-1}t  \right) + z \big[  (1+za) \delta +(a-z)a^{-1}- a^{2} \big] t_{n-1} ( y_{i,j-2} t  ) \\
-z t_{n-1} \left( y_{i,n-3} s_{n-2}^{2} x_{n-3,j-1}t  \right) \\
= a^{-1} \big[ (1+za)t_{n-1} ( y_{i,j-2} t  ) +(a-z) t_{n-1} ( y_{i,n-3}  x_{n-2,j-1}t  ) \bigr.\\
\bigl.  -a t_{n-1} ( y_{i,n-2}  x_{n-3,j-1}t  ) \big] + z \big[ \delta +z\left(a-a^{-1}\right) \big] t_{n-1} ( y_{i,j-2} t  ) \\
-z \big[(1-za^{-1})t_{n-1} ( y_{i,j-2} t  ) +(a^{-1} + z)t_{n-1} ( y_{i,n-2}  x_{n-3,j-1}t  ) \bigr.  \\
\bigl. - a^{-1} t_{n-1} ( y_{i,n-3}  x_{n-2,j-1}t  ) \big] = \left(z^2a + a+z\right) t_{n-1} (  y_{i,j-2}t  )\\
- \left(1 + za^{-1} + z^2\right)t_{n-1} ( y_{i,n-2} x_{n-3,j-1} t   ) + t_{n-1} ( y_{i,n-3} x_{n-2,j-1} t   ).
\end{multline*}
Then it follows directly that
\begin{multline*}
t_{n+1} (u s_n) =  \left( 1 + z^2 -za \right) t_{n-1} ( y_{i,n-2} x_{n-3,j-1} t   ) \\
+ (a-z) t_{n-1} (  y_{i,j-2}t  ) + (1 + za)t_{n-1} ( y_{i,n-2} x_{n-3,j-1} t   ) - t_{n-1} ( y_{i,n-3} x_{n-2,j-1} t   )\\
 + \left( -1+za - z^2 \right) a t_{n-1} (  y_{i,j-2}t  ) = \left(za^2- z^2a - z \right) t_{n-1} (  y_{i,j-2}t  ) \\
 + \left( 2 + z^2 \right) t_{n-1} ( y_{i,n-2} x_{n-3,j-1} t   ) - t_{n-1} ( y_{i,n-3} x_{n-2,j-1} t   )
\end{multline*}
and
\begin{multline*}
t_{n+1} (s_n u )  =\left(1-za^{-1}\right)  t_{n-1} ( y_{i,n-2} x_{n-3,j-1} t   ) \\
+ (1 + za) a t_{n-1} (  y_{i,j-2}t  ) - \left(z^2a + a+z\right) t_{n-1} (  y_{i,j-2}t  ) \\
 + \left(1 + za^{-1} + z^2\right)t_{n-1} ( y_{i,n-2} x_{n-3,j-1} t   ) - t_{n-1} ( y_{i,n-3} x_{n-2,j-1} t   )\\
= \left(za^2- z^2a - z \right) t_{n-1} (  y_{i,j-2}t  ) \\
+ \left( 2 + z^2 \right) t_{n-1} ( y_{i,n-2} x_{n-3,j-1} t   ) - t_{n-1} ( y_{i,n-3} x_{n-2,j-1} t   ).
\end{multline*}

If $i > j$, one can check that 
\begin{eqnarray*}
t_{n-1} ( y_{i,n-2}  t x_{n-2,j}  ) & = & t_{n-1} ( x_{n-2,j} y_{i,n-2}  t   ) \\
& = & t_{n-1} ( y_{i-1,n-3} x_{n-2,j} t   ) \\
t_{n} ( y_{i,n-1}  t x_{n-1,j}  ) & = & t_{n} (x_{n-1,j}  y_{i,n-1}  t  ) \\
& = & t_{n} (  y_{i-1,n-2} x_{n-1,j} t  ) \\
& = & a^{-1} t_{n-1} (  y_{i-1,n-2} x_{n-2,j} t  ) \\
& = & a^{-1} t_{n-1} (  x_{i-2,j}t  ) \\
t_{n} \left(y_{i,n-2} s_{n-1}^{-1} t  x_{n-1,j} \right) & = & t_{n} \left(x_{n-1,j} y_{i,n-2} s_{n-1}^{-1} t   \right) \\
& = & t_{n} \left( y_{i-1,n-3} s_{n-2}^{-1} s_{n-1}^{-1} s_{n-2}^{-1} x_{n-3,j} t   \right)  \\
& = & a^{-1} t_{n-1} \left( y_{i-1,n-3} s_{n-2}^{-2} x_{n-3,j} t   \right)  \\
& = & \left(a^{-1} + z\right) t_{n-1} (   x_{i-2,j}t )\\
&  & + \left(1 - za^{-1}\right)t_{n-1} ( y_{i-1,n-3} x_{n-2,j} t   )  \\
& &  - t_{n-1} ( y_{i-1,n-2} x_{n-3,j} t   ) \quad \text{by \eqref{BMW11}} \\
\end{eqnarray*}
and, using Relation \eqref{lem-eq2} plus Relations \eqref{BMW9} and \eqref{BMW11}, that
\begin{multline*}
t_{n} ( y_{i,n-1} t s_{n-1} x_{n-2,j}  ) = t_{n} \left( y_{i-1,n-3} s_{n-1} s_{n-2}^{-1} s_{n-1} x_{n-3,j}t \right ) \\
=-z  t_{n} \left(y_{i-1,n-3} s_{n-1}^2 x_{i-2,j}t  \right) + a t_{n-1} \left( y_{i-1,n-3} s_{n-2}^2 x_{n-3,j}t  \right) \\
 +z t_{n-1} \left( y_{i-1,n-3} s_{n-2}^{-2} x_{n-3,j}t  \right) \\
= -z \left[ \left(1-za^{-1}\right) \delta + \left(a^{-1} + z\right) a - a^{-2} \right] t_{n-1} (   x_{i-2,j}t ) \\
+ a t_{n-1} \left( y_{i-1,n-3} s_{n-2}^2 x_{n-3,j}t  \right) +z t_{n-1} \left( y_{i-1,n-3} s_{n-2}^{-2} x_{n-3,j}t  \right) \\
= -z \left[ \delta + z \left(a - a^{-1}\right) \right] t_{n-1} (   x_{i-2,j}t ) + a \left[ \left(1-za^{-1}\right) t_{n-1} (   x_{i-2,j}t ) \right. \\
 \left. + \left(a^{-1} + z\right) t_{n-1} ( y_{i-1,n-2} x_{n-3,j} t   ) - a^{-1} t_{n-1} ( y_{i-1,n-3} x_{n-2,j} t   ) \right]\\
+ z \left[ (1+za) t_{n-1} (   x_{i-2,j}t ) + (a-z) t_{n-1} ( y_{i-1,n-3} x_{n-2,j} t   ) \right. \\
\left. -a t_{n-1} ( y_{i-1,n-2} x_{n-3,j} t   ) \right] = \left(z^2a^{-1} + a^{-1} -z\right) t_{n-1} (   x_{i-2,j}t ) \\
+ \left(-1 +az -z^2\right) t_{n-1} ( y_{i-1,n-3} x_{n-2,j} t   ) + t_{n-1} ( y_{i-1,n-2} x_{n-3,j} t   ).
\end{multline*}

From this we can immediately deduce that 
\begin{multline*}
t_{n+1} (u s_n) =  \left( 1 + z^2 -za \right) t_{n-1} ( y_{i-1,n-3} x_{n-2,j} t   ) \\
+\left(z^2a^{-1} + a^{-1} -z\right) t_{n-1} (   x_{i-2,j}t )+ \left(-1 +az -z^2\right) t_{n-1} ( y_{i-1,n-3} x_{n-2,j} t   ) \\
 + t_{n-1} ( y_{i-1,n-2} x_{n-3,j} t   ) + \left( -1+za - z^2 \right) a^{-1} t_{n-1} (  x_{i-2,j}t  )\\
 = t_{n-1} ( y_{i-1,n-2} x_{n-3,j} t   )
\end{multline*}
and
\begin{multline*}
t_{n+1} (s_n u )  =\left(1-za^{-1}\right)  t_{n-1} ( y_{i-1,n-3} x_{n-2,j} t   )\\
+ (1 + za) a^{-1} t_{n-1} (   x_{i-2,j}t ) - \left(a^{-1}+z\right) t_{n-1} (   x_{i-2,j}t ) \\
 - \left(1 - za^{-1} \right)t_{n-1} ( y_{i-1,n-3} x_{n-2,j} t   ) + t_{n-1} ( y_{i-1,n-2} x_{n-3,j} t   ) \\
= t_{n-1} ( y_{i-1,n-2} x_{n-3,j} t   ).
\end{multline*}

$\bullet$ Case $(5)$: if $i=n$, we have 
$$ t_{n+1} (u s_n) = t_{n+1} \left( y_{j,n-1} t  s_n^{-1} s_n\right) = \delta t_{n} ( y_{j,n-2} s_{n-1} t  ) = \delta a t_{n-1} (y_{j,n-2} t  ) $$
and 
\begin{multline*}
 t_{n+1} (s_n u) = t_{n+1} \left(s_n y_{j,n-1} t s_n^{-1}\right) =  t_{n+1} \left( y_{j,n-2}  s_n s_{n-1} s_n^{-1} t \right) \\
 =  t_{n+1} \left( y_{j,n-2} s_{n-1}^{-1}  s_n s_{n-1} t\right) =  a t_{n} \left( y_{j,n-2} s_{n-1}^{-1} s_{n-1} t \right) = a \delta  t_{n-1} (y_{j,n-2}  t). 
\end{multline*}

If $i<n$, one can see that
\begin{multline*}
t_{n+1} (u s_n) = t_{n+1} (y_{j,n-1} t x_{n,i}  s_n) = t_{n+1} \left(y_{j,n-1} t s_n^{-1} s_{n-1}^{-1} s_n x_{n-2,i}  \right) \\
= t_{n+1} \left(y_{j,n-1} t s_{n-1} s_n^{-1} s_{n-1}^{-1} x_{n-2,i}  \right) = a^{-1} t_{n} \left(y_{j,n-1} t s_{n-1} s_{n-1}^{-1} x_{n-2,i}  \right)  \\
= a^{-1} t_{n} (y_{j,n-2} s_{n-1} t x_{n-2,i}  ) = t_{n-1} ( y_{j,n-2} t x_{n-2,i} )
\end{multline*}
and
\begin{multline*}
t_{n+1} (s_n u ) = t_{n+1} (s_n y_{j,n-1} t x_{n,i} ) = t_{n+1} \left( y_{j,n-2} s_n s_{n-1}  s_n^{-1} t x_{n-1,i}  \right) \\
= t_{n+1} \left(y_{j,n-2} s_{n-1}^{-1} s_n s_{n-1} t x_{n-1,i}  \right) = a t_{n} \left(y_{j,n-2} s_{n-1}^{-1} s_{n-1} t x_{n-1,i} \right)  \\
= a t_{n} \left( y_{j,n-2} t s_{n-1}^{-1} x_{n-2,i}   \right) = t_{n-1} ( y_{j,n-2} t x_{n-2,i} ).
\end{multline*}

$\bullet$ Case $(6)$: if $i=n$, we obtain
\begin{multline*}
 t_{n+1} (u s_n) = t_{n+1} (s_n y_{j,n-1} t s_n) =  t_{n+1} ( y_{j,n-2} s_n s_{n-1} s_n t  ) \\
= t_{n+1} ( y_{j,n-2} s_{n-1} s_n s_{n-1}  t  ) = a t_{n} \left(y_{j,n-2} s_{n-1}^2 t  \right)=  t_{n+1} \left(s_n y_{j,n-2} s_{n-1}^2 t  \right)
\end{multline*}
and, using Relation \eqref{lem-eq1}, 
\[
 t_{n+1} (s_n u ) = t_{n+1} (s_n^2 y_{j,n-2}s_{n-1} t) = t_{n+1} \left(s_n y_{j,n-2} s_{n-1}^2 t  \right)\ .\]

If $i<n$, we have immediately that
\begin{multline*}
 t_{n+1} (u s_n) = t_{n+1} ( y_{i,n} y_{j,n-1} t s_n) =  t_{n+1} (y_{i,n-1} y_{j,n-2} s_n s_{n-1} s_n t  ) \\
=  t_{n+1} (y_{i,n-1} y_{j,n-2} s_{n-1} s_n s_{n-1}  t  ) \\
= \begin{cases}
a t_{n} \left(y_{i,n-2} s_{n-1}^{3} t  \right) & \text{if $j=n-1$,}\\
\text{or} & \\
\begin{array}{l}
a t_{n} \left(y_{i,n-2} y_{j,n-3} s_{n-1} s_{n-2} s_{n-1}^{2} t  \right) \\
 = a t_{n} \left(y_{i,n-2} y_{j,n-3} s_{n-2}^{2} s_{n-1} s_{n-2}  t  \right)\\
 = a^{2} t_{n-1} \left(y_{i,n-2} y_{j,n-3} s_{n-2}^{3} t  \right)
\end{array}
& \text{if $j<n-1$,}
\end{cases} 
\end{multline*}
and
\begin{multline*}
 t_{n+1} (s_n u ) = t_{n+1} (s_n y_{i,n} y_{j,n-1} t ) =  t_{n+1} (y_{i,n-2} s_n  s_{n-1}s_n y_{j,n-2} s_{n-1} t  ) \\
=  t_{n+1} (y_{i,n-2}  s_{n-1} s_n s_{n-1}  y_{j,n-2} s_{n-1} t  ) \\
= \begin{cases}
a t_{n} \left(y_{i,n-2} s_{n-1}^{3} t  \right) & \text{if $j=n-1$,}\\
\text{or} & \\
\begin{array}{l}
a t_{n} \left(y_{i,n-2} y_{j,n-3} s_{n-1}^2 s_{n-2} s_{n-1} t  \right) \\
 = a t_{n} \left(y_{i,n-2} y_{j,n-3} s_{n-2} s_{n-1} s_{n-2}^{2} t  \right)\\
 = a^{2} t_{n-1} \left(y_{i,n-2} y_{j,n-3} s_{n-2}^{3} t  \right)
\end{array}
& \text{if $j<n-1$.}
\end{cases} 
\end{multline*}
\end{proof}

\begin{thm}\label{reconstr Kauffman}
There exists, up to normalization, a unique Markov trace satisfying the inclusion property with $d=\delta$ on the tower of BMW algebras.
\end{thm}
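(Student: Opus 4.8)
The plan is to turn Proposition \ref{stab+inc=trace} into the main lever, since it makes the trace property essentially free and leaves only two very rigid conditions to handle. Concretely, that proposition shows that any family $(t_n)_{n\ge 1}$ of $R$-linear maps satisfying the stabilization and inclusion (with $d=\delta$) properties automatically satisfies the trace property; so ``a Markov trace satisfying the inclusion property with $d=\delta$'' is the \emph{same} datum as ``a family satisfying stabilization and inclusion with $d=\delta$''. I would therefore argue throughout only with these two conditions, which mesh perfectly with the inductive basis $\mathbf{b}^{BMW}_{n+1}$ of Proposition \ref{basisBMW}.

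For uniqueness I would show by induction on $n$ that the whole family is pinned down by the single scalar $t_1(1)$. Assuming $t_n$ known, I use the factorizations $x_{n,i}=s_n^{-1}\,\iota_n(x_{n-1,i})$ and $y_{i,n}=\iota_n(y_{i,n-1})\,s_n$ to evaluate each of the three types of basis elements of $\mathbf{b}^{BMW}_{n+1}$ in terms of $t_n$: the inclusion property gives $t_{n+1}(\iota_n(b))=\delta\,t_n(b)$, while the stabilization property gives $t_{n+1}(\iota_n(b)x_{n,i})=a^{-1}t_n(b\,x_{n-1,i})$ and $t_{n+1}(y_{i,n}\iota_n(b))=a\,t_n(y_{i,n-1}\,b)$. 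By $R$-linearity these formulas determine $t_{n+1}$ on all of $BMW_{n+1}$, so the family is unique once $t_1(1)$ is chosen. Note that this direction uses stabilization only on products that are literally of the form $\iota_n(u)s_n^{\pm1}\iota_n(v)$, so no extra work is needed.

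For existence I would run the same formulas in reverse: starting from $t_1(1)=\delta$, I take the three displayed identities as the \emph{definition} of $t_{n+1}$ on the basis $\mathbf{b}^{BMW}_{n+1}$ and extend $R$-linearly. The inclusion property then holds by construction, and Proposition \ref{stab+inc=trace} supplies the trace property at no cost, so the only remaining point is to check that this recursively defined functional really satisfies the stabilization property $t_{n+1}(\iota_n(u)s_n^{\pm1}\iota_n(v))=a^{\pm1}t_n(uv)$ for \emph{all} $u,v\in BMW_n$, and not merely for the products already in basis form. This is the exact analogue of Jones' verification for the HOMFLY--PT trace. (As a shortcut, existence is also classical: under the isomorphism $BMW_n\cong KT_n$ the two-variable Kauffman polynomial furnishes such a trace, the loop value \eqref{valeurtrivial} forcing $d=\delta$.)

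The step I expect to be the main obstacle is precisely this consistency check. A general element $\iota_n(u)s_n^{\pm1}\iota_n(v)$ must first be rewritten in the basis $\mathbf{b}^{BMW}_{n+1}$ before $t_{n+1}$ can be applied, and one must verify the outcome is $a^{\pm1}t_n(uv)$ independently of the chosen reduction. I anticipate this will require the relations \eqref{BMW7}--\eqref{BMW16} together with the identities \eqref{lem-eq1}--\eqref{lem-eq3} of the Lemma, organized by a case analysis on how the $(n+1)$-st strand attaches (paired up, paired down, or turning back), mirroring the six cases in the proof of Proposition \ref{stab+inc=trace}. Once this is secured the theorem follows, with $t_1(1)=\delta$ singled out as the normalization whose trace reconstructs the Kauffman polynomial.
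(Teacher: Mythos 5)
Your overall architecture coincides with the paper's: uniqueness is read off from the inductive basis of Proposition \ref{basisBMW} (the three evaluation formulas you write down are exactly the recursion the paper uses), and existence is obtained by taking those formulas as the definition of $t_{n+1}$ on $\mathbf{b}^{BMW}_{n+1}$, getting inclusion for free and the trace property from Proposition \ref{stab+inc=trace}. But the one step that actually carries the weight of the existence proof --- verifying that the recursively defined $t_{n+1}$ satisfies $t_{n+1}\left(\iota_n(u)s_n^{\pm1}\iota_n(v)\right)=a^{\pm1}t_n(uv)$ for \emph{arbitrary} $u,v\in BMW_n$, not just for products already in basis form --- is only announced, not carried out. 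You correctly identify it as ``the main obstacle'' and sketch a strategy (rewrite $\iota_n(u)s_n^{\pm1}\iota_n(v)$ in the basis using \eqref{BMW7}--\eqref{BMW16} and \eqref{lem-eq1}--\eqref{lem-eq3}, with a case analysis mirroring the six cases of Proposition \ref{stab+inc=trace}), but since nothing is proved, the proposal as written has a genuine gap precisely at the point where a proof is needed. Note also that the six-case analysis in Proposition \ref{stab+inc=trace} \emph{presupposes} a family satisfying stabilization and inclusion, so you cannot simply reuse it here; you would have to redo a comparable computation from scratch for the recursively defined functional.

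The paper closes this gap by a different, much lighter device: it introduces the linear map $Cl_{n+1}:BMW_{n+1}\to BMW_n$ given by topologically closing the $(n+1)$st strand under the isomorphism $BMW_{n+1}\cong KT_{n+1}$, and proves the identity $t_{n+1}(r)=t_n\left(Cl_{n+1}(r)\right)$ by checking it only on the basis elements of $\mathbf{b}^{BMW}_{n+1}$, where it is immediate from the skein relations \eqref{skeinrel}. Since $Cl_{n+1}$ is defined on all of $BMW_{n+1}$ and visibly sends $\iota_n(u)s_n^{\pm1}\iota_n(v)$ to $a^{\pm1}uv$, stabilization for arbitrary $u,v$ follows at once, with no case analysis. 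Your parenthetical remark that ``under the isomorphism $BMW_n\cong KT_n$ the two-variable Kauffman polynomial furnishes such a trace'' is in fact the germ of this argument, but as stated it either begs the question (if you take the existence of the Kauffman trace as known, the theorem is assumed rather than proved) or needs to be upgraded into the precise lemma $t_{n+1}=t_n\circ Cl_{n+1}$ relating your recursively defined functional to the topological closure. I would recommend replacing the projected six-case verification by that lemma.
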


\begin{proof}
Unicity: it is obvious from the inductive construction of the bases $\mathbf{b}^{BMW}_{n}$ for $n\geq 1$ that, under stabilization and inclusion, the only free parameter is $t_1(1)$.

Existence: we check that the family $(t_n)_{n \geq1}$ of linear maps defined inductively as follows is indeed a Markov trace. The value $t_1(1)$ is just a free parameter and, if $r \in BMW_{n+1}$ has the following expression in the basis $\mathbf{b}^{BMW}_{n+1}$:
$$r= \sum\limits_{i=1}^k b_i u_i s_n^{\epsilon_i} v_i + \sum\limits_{j=1}^l c_j w_j $$
where $u_i, v_i, w_j \in BMW_n$ and $b_i, c_j \in R$ for all $i,j$, then we define
$$t_{n+1}(r)= \sum\limits_{i=1}^k a^{\epsilon_i}b_i t_n(u_i v_i) + \delta \sum\limits_{j=1}^l c_j t_n(w_j).$$
We only need to prove that $(t_n)_{n \geq1}$ satisfies the stabilization and inclusion properties since these imply the trace property by Proposition \ref{stab+inc=trace}. The inclusion is obvious: if $r \in BMW_{n}$ then $b_i = 0$ for all $ i=1, \cdots, k$ and it remains $t_{n+1}(r)= \delta \sum\limits_{j=1}^l c_j t_n(w_j) = \delta t_n(r)$. To check the stabilization property, one needs to verify that for all $u,v \in BMW_n$, one has $t_{n+1}\left(u s_n^{\pm1}v\right) = a^{\pm1} t_n(uv)$. Let us consider the linear map
\begin{equation} \label{top closure map}
Cl_{n+1} : BMW_{n+1} \xrightarrow{\quad \simeq \quad} KT_{n+1} \xrightarrow{\quad \quad} KT_n \xrightarrow{\quad \simeq \quad} BMW_{n}
\end{equation}
where the middle arrow is the topological closure of the $(n+1)$st strand of the tangle. We will use the following lemma.

\begin{lem}
For all $r\in BMW_{n+1}$, we have $t_{n+1} (r) = t_n\left(Cl_{n+1}(r)\right)$.
\end{lem}

\begin{proof}
It sufficient to prove it for all $r\in \mathbf{b}^{BMW}_{n+1}$. For these basis elements, the equality follows directly from the skein relations \eqref{skeinrel} holding in $KT_{n+1}$.
\end{proof}

Note that this lemma only holds because we supposed that the trace satisfies the inclusion property. From this we can conclude
$$t_{n+1}\left(u s_n^{\pm1}v\right) = t_n \left( Cl_{n+1}\left(u s_n^{\pm1}v\right)\right) = t_n\left(a^{\pm1}uv\right) =  a^{\pm1} t_n(uv).$$
\end{proof}

\begin{rems} Let us make now a few observations about this trace, how it relates to polynomial invariants of links and also why we allowed ourselves to suppose the inclusion property beforehand.
\begin{itemize}
	\item The trace $t_n^K$ on the tower of BMW algebras constructed in the theorem and for the choice of normalization $t_1^K(1) = \delta$ gives the two-variable Kauffman polynomial. As observed before, this choice of normalization implies that this trace is multiplicative. The Kauffman polynomial is defined, for any link $L$, by:
	$$ F(L)(a,q) = a^{-w(L)} \left< L \right>(a,q)$$
where $w(L)= n_+ - n_-$ is the writhe of $L$ and $ \left< L \right>$ its Kauffman bracket defined via the skein relations \eqref{skeinrel}. If $L$ equals to $\widehat{\bu}$, the closure of a $n$-strands braid $\bu$, and if $\pi$ is the projection from the group algebra of the $n$-strands braid group to $BMW_n$ then:
$$ F(L)(a,q) = a^{-w(L)} t_n^K(\pi ( \bu)).$$
\item The projection $\overline{\pi}$ from the group algebra of the $n$-strands braid group to the Hecke algebra actually factorizes through the BMW algebra as $\overline{\pi} = q_E \circ \pi $. Thus the Markov trace $t_n^H$ on the tower of Hecke algebras can be viewed as a Markov trace on the tower of BMW algebras just by setting $t_n^H(u) = t_n^H(q_E(u))$ for all $u \in BMW_n$.
\item For a more general classification of Markov traces on BMW algebras, see \cite{MaWaBMW}. In this paper, Marin and Wagner prove in full generality (without assuming the inclusion property) that, for generic $a$ and $q$, a Markov trace on the tower of BMW algebras only depends on the two parameters $t_1(1)$ and $t_2(e_1)$. Actually this general result follows easily from Equation \eqref{trtr1'} of Section \ref{trtrBMW} of the present paper as, for any $u \in BMW_{n}$, one has
\begin{eqnarray*}
t_{n+1}(u) & = & z^{-1} t_{n+1}(u s_{n}) - z^{-1} t_{n+1}(u s_{n}^{-1}) + t_{n+1}(u e_{n}) \\
& = & z^{-1}(a- a^{-1}) t_n(u)+ t_{n}(Cl_{n}(u) e_{n-1}).
\end{eqnarray*} 
The two traces $t_n^H$ and $t_n^K$, which are linearly independent, hence form a basis of the space of Markov traces on the tower of BMW algebras. This explains why we could restrict ourselves to looking for a trace on the BMW algebras which satisfies in particular the inclusion property for $d=\delta$. The fact that $\delta \neq \delta^H$ also insures that this trace is linearly independent from $t_n^H$, since if $t_n^K = \alpha t_n^H$ then the inclusion parameters would have to be equal.
\end{itemize}
\end{rems}

\section{Transverse Markov traces}

For a kind introduction to transverse links and transverse braids, we suggest the survey of Etnyre \cite{EtnySur} and references therein.\\

\subsection{Definition}

A transverse Markov trace on the tower of algebras $(A_n)_{n\geq1}$ over $R$ is a family $(\tau_n)_{n\geq 1}$ of $R$--linear maps $\tau_n : A_n \rightarrow R$ satisfying the following conditions:
\begin{itemize}
\item trace property: $\tau_n(uv) = \tau_n(vu)$,
\item positive stabilization property: $\tau_{n+1} \left( \iota_n(u) g_n \iota_n(v) \right) = a \tau_n(uv)$
\end{itemize}
for all $u,v \in A_n$ and $n\geq 1$.

Note that, unlike for a classical Markov trace, we do not suppose negative stabilization. We also relaxed a priori another hypothesis, which was the inclusion property. 

From now on, let $(\tau_n)_{n\geq 1}$ be a transverse Markov trace.

\subsection{Transverse Markov traces on Hecke algebras}

We will use one of the inductive bases of the Hecke algebra $H_{n+1}$ recalled in Section \ref{subsec-bases}. It is defined recursively by $\mathbf{b}^{H}_{1}= \{ 1 \}$ and (we omit the inclusion map $\iota_n$):
\[ \mathbf{b}^{H}_{n+1}=\{ b, b\sigma_n\dots\sigma_i | b \in \mathbf{b}^{H}_{n}, i = 1, \dots, n \}.\]
Another description of the basis elements of $H_{n+1}$ is as follows:
\[ \mathbf{b}^{H}_{n+1}=\{1,u\s_k\s_{k-1}\dots\s_i |\ k=1,\dots,n,\ i=1,\dots,k,\ u\in \mathbf{b}_k^H \}\]
\subsubsection{Classification}

\begin{thm}\label{classification}
A transverse Markov trace $(\tau_n)_{n \geq1}$  on $(H_n)_{n\geq1}$ is uniquely determined by the family of parameters $\{\tau_n(1)\}_{n \geq1}\subset R$, and is given recursively on the basis $\mathbf{b}^{H}_{n+1}$ by:
\begin{equation}\label{rhon-base}
\tau_{n+1}(u\s_k\s_{k-1}\dots\s_i)=a \tau_{n}(u\s_{k-1}\dots\s_i),
\end{equation}
for $n\geq 1$, $k=1,\dots,n$, $i=1,\dots,k$ and $u\in \mathbf{b}_k^H$.
\end{thm}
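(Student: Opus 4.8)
The plan is to reduce the whole statement to the single recursive identity \eqref{rhon-base} and then observe that this identity, combined with linearity, propagates the values of $\tau$ downward. Concretely, I would argue by induction on $n$ that $\tau_n$ is completely determined by the scalars $\tau_1(1),\dots,\tau_n(1)$. The element $1$ contributes the free parameter $\tau_{n+1}(1)$, and by the second (unfolded) description of $\mathbf{b}^H_{n+1}$ recalled before the statement, every other basis element has the form $u\s_k\s_{k-1}\dots\s_i$ with $u\in\mathbf{b}^H_k$ and $k\le n$. Granting \eqref{rhon-base}, its value $a\,\tau_n(u\s_{k-1}\dots\s_i)$ is $a$ times the value of $\tau_n$ on a fixed element of $H_k\subseteq H_n$; expanding that element in $\mathbf{b}^H_n$ and invoking the induction hypothesis shows it is determined by $\tau_1(1),\dots,\tau_n(1)$. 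Since $\tau_{n+1}$ is $R$-linear and $\mathbf{b}^H_{n+1}$ is a basis, this closes the induction and yields both the uniqueness and the explicit recursion.

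So the real work is to establish \eqref{rhon-base} for a transverse Markov trace. First I would treat the case $k=n$: here $w=u\s_n\s_{n-1}\dots\s_i$ is already of the shape $\iota_n(u)\,\s_n\,\iota_n(\s_{n-1}\dots\s_i)$ with $u$ and $\s_{n-1}\dots\s_i$ both in $H_n$, so positive stabilization gives directly $\tau_{n+1}(w)=a\,\tau_n(u\s_{n-1}\dots\s_i)$. For $k<n$ the difficulty is that $w=u\s_k\dots\s_i$ lies in $H_{k+1}\subsetneq H_{n+1}$ and does not involve $\s_n$ at all, so positive stabilization cannot be applied as is, and, crucially, we have not assumed the inclusion property that was available for classical traces. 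The key idea is to push $w$ to the top of the tower by the shift $sh$: the element $sh^{n-k}(w)=sh^{n-k}(u)\,\s_n\s_{n-1}\dots\s_{i+n-k}$ now genuinely uses $\s_n$, and since both $sh^{n-k}(u)$ and $\s_{n-1}\dots\s_{i+n-k}$ lie in $H_n$, it is again of the form $\iota_n(x)\,\s_n\,\iota_n(y)$. Because $w$ and $sh^{n-k}(w)$ are conjugate in $H_{n+1}$ (as recalled in the ``Common properties and interplay'' subsection) and $\tau_{n+1}$ is a trace, $\tau_{n+1}(w)=\tau_{n+1}(sh^{n-k}(w))$; positive stabilization then gives $\tau_{n+1}(w)=a\,\tau_n\bigl(sh^{n-k}(u\s_{k-1}\dots\s_i)\bigr)$. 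A second appeal to conjugacy, this time inside $H_n$, replaces $sh^{n-k}(u\s_{k-1}\dots\s_i)$ by $u\s_{k-1}\dots\s_i$ and produces exactly \eqref{rhon-base}.

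The main obstacle is precisely this $k<n$ case: without negative stabilization or the inclusion property one cannot descend from $H_{n+1}$ to $H_n$ by a naive formula, and the whole point is that conjugation-invariance of the trace is strong enough to compensate. I would take care to verify the two conjugacy statements rigorously, namely that each intermediate shift $sh^{j}(w)$ stays in the image $\iota_n(H_n)$ (respectively that $sh^{j}(u\s_{k-1}\dots\s_i)$ stays in $\iota_{n-1}(H_{n-1})$), so that the cited conjugacy of an element with its shift applies at every step, and that the top-shift indeed lands in the $\iota_n(x)\,\s_n\,\iota_n(y)$ pattern required by positive stabilization. Once this bookkeeping is settled the formula holds with no further computation, and I would emphasize that the argument uses \emph{only} the trace property and positive stabilization, consistently with the fact that, unlike in the classical case, neither negative stabilization nor the inclusion property is hypothesized here.
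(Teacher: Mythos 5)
Your argument for the ``necessity'' half is correct and is essentially the paper's: the paper also derives the key identity by conjugating $u\s_k v$ up to an expression involving $\s_n$ (it conjugates by $\s_n\cdots\s_{k+1}$, producing $u\s_k^{-1}\cdots\s_{n-1}^{-1}\s_n\s_{n-1}\cdots\s_k v$, and then applies positive stabilization; your use of the shift $sh^{n-k}$ together with two conjugacy steps is the same mechanism in different clothing). A minor remark: the paper actually proves the slightly more flexible form $\tau_{n+1}(u\s_k v)=a\tau_n(uv)$ for all $u,v\in H_k$, which is equivalent to \eqref{rhon-base} by expanding $v$ in the basis, and is the form needed repeatedly afterwards.

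The genuine gap is that you prove only half of what the paper's proof establishes. You show that \emph{if} $(\tau_n)_{n\geq1}$ is a transverse Markov trace, then it obeys \eqref{rhon-base} and is therefore determined by $\{\tau_n(1)\}_{n\geq1}$. You never address the converse: that for an arbitrary choice of parameters, the linear maps \emph{defined} by \eqref{rhon-base} on the basis actually constitute a transverse Markov trace --- in particular that they satisfy the trace property $\tau_{n+1}(uv)=\tau_{n+1}(vu)$. This is not automatic (the recursion is only prescribed on basis elements of a special shape, and nothing a priori forces the resulting functionals to be invariant under cyclic permutation), and it is where the paper spends the bulk of its effort: an induction on $n$ with a case analysis comparing $i$ to $k$ for $u=u_1\s_k u_2$ and $v=\s_i$, the case $i=k$ splitting further into four subcases that use the quadratic relation. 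Without this verification the classification has no content --- for instance the basic traces $\tau^{(k)}_n$ and the decomposition \eqref{arbitrary} used throughout the rest of Section 3 would not be known to exist. You should either restrict your claim explicitly to uniqueness, or supply the existence/well-definedness argument.
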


Note that we need to give this recursive formula for all $k=1, \dots, n$ and not only for $k=n$ as we do not suppose that the trace satisfies the inclusion property.

\begin{proof} First we note that the defining formula (\ref{rhon-base}) implies the following more general formula:
\begin{equation}\label{rhon}
\tau_{n+1}(u\s_k v)=a \tau_{n}(uv) \quad \text{for $n\geq 1$, $k=1,\dots,n$ and $u,v\in H_{k}$.}
\end{equation}
Indeed, it is enough to prove (\ref{rhon}) for $u,v$ basis elements in $\mathbf{b}_k^H$. So let $v=v_1\s_{k-1}\dots \s_i$, where $v_1\in H_{k-1}$ and $i\in\{1,\dots,k\}$ (with the convention that $v=v_1$ if $i=k$). We have indeed that, using (\ref{rhon-base}),
\[\tau_{n+1}(u\s_k v)=\tau_{n+1}(uv_1\s_k\s_{k-1}\dots \s_i)=a \tau_{n}(u v_1\s_{k-1}\dots \s_i)=a \tau_{n}(uv) .\]

Then we proceed to show that any transverse Markov trace must satisfy (\ref{rhon}), and hence the defining formula (\ref{rhon-base}). Let $n\geq 1$, $k\in\{1,\dots,n\}$ and $u,v\in H_{k}$. We have:
\[\s_{n}\dots \s_{k+1}u\s_k v \s_{k+1}^{-1}\dots\s_{n}^{-1}=u \s_k^{-1}\dots\s_{n-1}^{-1}\s_{n}\s_{n-1}\dots\s_kv,\]
where we used the fact that $u,v$ commute with $\s_{n},\dots, \s_{k+1}$ and the braid relations. Therefore, if $\tau_n$ is a transverse Markov trace on $H_n$, we must have:
\[\begin{array}{ll}
\tau_{n+1}(u\s_k v) & =\tau_{n+1}\left(\s_{n}\dots \s_{k+1}u\s_k v \s_{k+1}^{-1}\dots\s_{n}^{-1}\right)\\
 & =\tau_{n+1}\left(u \s_k^{-1}\dots\s_{n-1}^{-1}\s_{n}\s_{n-1}\dots\s_k v\right) \\
 & =a \tau_{n}\left(u \s_k^{-1}\dots\s_{n-1}^{-1}\s_{n-1}\dots\s_k v\right)\\
 & =a\tau_{n}(uv) ,\end{array}
 \]
where we used the trace property in the first equality, and the positive stabilization property in the third one. Therefore any transverse Markov trace must satisfy this recursive formula. Furthermore, this recursive formula and the data of $\{\tau_n(1)\}_{n \geq1}\subset R$ are obviously sufficient to fully determine this transverse Markov trace.
 
Now it remains to show that, for any choice of parameters $\{\tau_n(1)\}_{n \geq1}\subset R$, the family of maps $(\tau_n)_{n \geq1}$ given in the theorem is indeed a well-defined transverse Markov trace. 
 
First, for $n\geq1$, by taking $k=n$ in (\ref{rhon}), we have at once the positive stabilization property. Thus only the trace property needs to be checked. 

We will proceed by induction on $n$. For $n=1,2$, the trace property is automatically satisfied since $H_1$ and $H_2$ are commutative. So let $n\geq 3$ and let $u,v\in H_{n+1}$. It is enough to take for $u$ an element of the basis $\mathbf{b}^H_{n+1}$ and for $v$ a generator of $H_{n+1}$. If $u=1$ there is nothing to do, so let 
\[u=u_1\s_k u_2, \quad\text{where $k\in\{1,\dots,n\}$ and $u_1,u_2\in H_k$,}\]
and let $v=\s_{i}$, where $i\in\{1,\dots,n\}$. We have to show that $\tau_{n+1}(uv)=\tau_{n+1}(vu)$.

$\bullet$ Case $(1)$: if $i > k$, we have $u\in H_i$ and therefore
\[\tau_{n+1}(uv)=\tau_{n+1}(u \s_i )=a\tau_{n}(u)=\tau_{n+1}( \s_i u) = \tau_{n+1}(vu)\]
where we used (\ref{rhon}) for $\s_i$ twice.

$\bullet$ Case $(2)$: if $i < k$, then $\s_i \in H_k$. We have, now using (\ref{rhon}) for $\s_k$:
\begin{multline*}\tau_{n+1}(uv)=\tau_{n+1}(u_1\s_k u_2\s_i )=a\tau_{n}(u_1 u_2\s_i) \\
=a\tau_{n}(\s_iu_1 u_2)=\tau_{n+1}(\s_i u_1\s_k u_2 )= \tau_{n+1}(vu).\end{multline*}
where the induction hypothesis, namely the trace property for $\tau_{n}$, yielded the desired result.

$\bullet$ Case $(3)$: if $i = k$, we need to describe more precisely $u_1$ and $u_2$. In general, $u_1,u_2$ can be taken of the following form:
\[\left\{\begin{array}{ll}
u_2=\s_{k-1}\dots \s_j \qquad & \text{where $j\in\{1,\dots,k\}$,}\\
u_1=w_1\s_{l} w_2 & \text{where $l\in\{0,1,\dots,k-1\}$ and $w_1,w_2\in H_{l}$ .}
\end{array}\right.\]
By convention, $j=k$ means $u_2=1$ and $l=0$ means $u_1=1$.

First, we transform $vu=\s_k w_1\s_{l} w_2\s_k u_2$. If $l<k-1$, $\s_k$ commutes with $u_1=w_1\s_{l} w_2$. If $l=k-1$, we move the first $\s_k$ through $w_1$ and the second through $w_2$ and apply the braid relation $\s_k\s_{k-1}\s_k=\s_{k-1}\s_k\s_{k-1}$. We obtain:
\[vu
=\left\{\begin{array}{ll}
u_1\s_k^2u_2 & \text{if $l<k-1$,}\\
w_1\s_{k-1}\s_k\s_{k-1}w_2u_2 \qquad & \text{if $l=k-1$.}
\end{array}\right.\]
Next we apply $\tau_{n+1}$. We use the definition and the quadratic relation for the generators to obtain:
\begin{equation}\label{rho-xy}
\tau_{n+1}(vu)=\left\{\begin{array}{ll}
\tau_{n+1}(u_1u_2)+za\tau_{n}(u_1u_2) & \text{if $l<k-1$,}\\
a\tau_{n}(w_1w_2u_2)+az\,\tau_{n}(w_1\s_{k-1}w_2u_2) \quad & \text{if $l=k-1$.}
\end{array}\right.
\end{equation}

Then, we transform $uv=u_1\s_k\s_{k-1}\dots \s_j\s_k$. If $j<k$, we use the braid relations and we obtain:
\[uv
=\left\{\begin{array}{ll}
u_1\s_k^2 & \text{if $j=k$,}\\
u_1\s_{k-1}\s_k\s_{k-1}\dots\s_j \qquad & \text{if $j<k$.}
\end{array}\right.\]
We apply $\tau_{n+1}$, use the definition and the quadratic relation for the generators and get:
\begin{equation}\label{rho-yx}
\tau_{n+1}(uv)=\left\{\begin{array}{ll}
\tau_{n+1}(u_1)+za\tau_n(u_1) & \text{if $j=k$,}\\
a\tau_{n}(u_1\s_{k-2}\dots\s_j)+a z\tau_{n}(u_1\s_{k-1}\dots\s_j) \quad & \text{if $j<k$.}
\end{array}\right.
\end{equation}
To conclude we must compare (\ref{rho-xy}) and (\ref{rho-yx}). There are four cases:
\begin{itemize}
\item ($j=k$ and $l<k-1$) Here $u_2=1$ and the right hand sides of (\ref{rho-xy}) and (\ref{rho-yx}) coincide.
\item ($j=k$ and $l=k-1$) Here also $u_2=1$ and the result follows from $\tau_{n+1}(u_1)=a\tau_{n}(w_1w_2)$.
\item ($j<k$ and $l<k-1$) Since $u_1\in H_{l+1}$, we have
$$\tau_{n+1}(u_1u_2)=\tau_{n+1}(u_1\s_{k-1}\s_{k-2}\dots\s_j)=a\tau_{n}(u_1\s_{k-2}\dots\s_j) $$
so that the right hand sides of (\ref{rho-xy}) and (\ref{rho-yx}) coincide as well.
\item ($j<k$ and $l=k-1$) In the right hand sides of (\ref{rho-xy}) and (\ref{rho-yx}), the terms with coefficient $az$ coincide. For the other terms, we have on one hand
\[
a\tau_{n}(w_1w_2u_2)=a\tau_{n}(w_1w_2\s_{k-1}\dots \s_j)=a^2\tau_{n-1}(w_1w_2\s_{k-2}\dots\s_j) ,\]
and on the other hand
\[a\tau_{n}(u_1\s_{k-2}\dots\s_j)=a\tau_{n}(w_1\s_{k-1}w_2\s_{k-2}\dots\s_j)=a^2\tau_{n-1}(w_1w_2\s_{k-2}\dots\s_j).\]
\end{itemize}
The proof of the theorem is complete.
\end{proof}

\begin{rem}\label{procedure} Let $n\geq 1$ and $u\in H_n$. Given a transverse Markov trace $(\tau_n)_{n\geq1}$, the recursive Formula \eqref{rhon} allows to express explicitly $\tau_n(u)$ in terms of the parameters $\tau_1(1),\dots,\tau_n(1)$. For example,
\[\tau_3(\s_2\s_1\s_2)=\tau_3(\s_1\s_2\s_1)=a\tau_2\left(\s_1^2\right)=a\tau_2(1+z\s_1)=a\tau_2(1)+a^2z\tau_1(1).\]
\end{rem}

\subsubsection{Arbitrary and basic transverse Markov traces}\label{arbbastrtr}

Together with the classification obtained above, we have an explicit description of an arbitrary transverse Markov trace on $(H_n)_{n\geq1}$. In particular, given a family of parameters $\{\alpha_n\}_{n\geq1}\subset R$, a unique transverse Markov trace $(\tau_n)_{n\geq1}$ is determined by fixing:
\[\tau_n(1)=\alpha_n,\quad \forall n\geq1.\]
We call the set $\{\alpha_n\}_{n\geq1}$ the parameters of the transverse Markov trace. It will sometimes be useful to see the parameters $\{\alpha_n\}_{n\geq1}$ as generic (that is, as indeterminate).

For $k\in\Z_{>0}$, we denote by $\left(\tau_n^{(k)}\right)_{n\geq1}$ the transverse Markov trace with parameters $\{\delta_{n,k}\}_{n\geq1}$ (that is, $\alpha_k=1$ and $\alpha_{l}=0$ if $l\neq k$).
In terms of these transverse Markov traces, an arbitrary one $\left(\tau_n\right)_{n\geq1}$ with parameters $\{\alpha_n\}_{n\geq1}$ is given by:
\begin{equation}\label{arbitrary}
\left(\tau_n\right)_{n\geq1}=\sum_{k\geq 1}\alpha_k\left(\tau_n^{(k)}\right)_{n\geq1} .
\end{equation}
Indeed, according to Theorem \ref{classification}, the two sides of \eqref{arbitrary} coincide since they both give a transverse Markov trace on $(H_n)_{n\geq1}$, and they coincide on 1 for each $n\geq1$.

We call the traces $\left(\tau_n^{(k)}\right)_{n\geq1}$ for all $k>0$ the basic transverse Markov traces. An arbitrary transverse Markov trace can be expressed in terms of these linearly independent basic traces, hence they form a basis of the space of transverse Markov traces on the tower of Hecke algebras $(H_n)_{n\geq 1}$.

\begin{rem} Given an arbitrary transverse Markov trace on $(H_n)_{n\geq1}$, it is a classical Markov trace if and only if its parameters $\{\alpha_n\}_{n\geq1}$ satisfy
\[\alpha_n=\left(\delta^H\right)^{n-1}\alpha_1 .\]

Recall that the Markov trace $\left(t_n^H\right)_{n\geq1}$ giving rise to the HOMFLY--PT polynomial is obtained by further setting $\alpha_1=\delta^H$. In other words, the expansion of this trace in terms of the basic transverse Markov traces is:
\begin{equation}\label{tau-basic}
t_n^H=\sum_{1\leq k\leq n}\left(\delta^H\right)^k\tau_n^{(k)},\qquad \forall n\geq1 .
\end{equation}
\end{rem}

\begin{example} Let $n=2$ and consider $\s_1^{-1}\in H_2$. Recall that $\s_1^{-1}=\s_1-z$. For an arbitrary transverse Markov trace $\left(\tau_n\right)_{n\geq1}$ with parameters $\{\alpha_n\}_{n\geq1}$, we have:
\[\tau_2\left(\s_1^{-1}\right)=\tau_2(\s_1)-z\tau_2(1)=a\tau_1(1)-z\tau_2(1)=a\alpha_1-z\alpha_2 .\]
Thus, we have $\tau_2^{(1)}\left(\s_1^{-1}\right)=a$ and $\tau_2^{(2)}\left(\s_1^{-1}\right)=-z$. 

We note that $a^{-1} \cdot 1\in H_1$ and $\sigma_1^{-1}\in H_2$ are distinguished by the transverse Markov traces. Indeed, we have 
$$a^{-1}\tau_1(1)=a^{-1}\alpha_1 \neq a\alpha_1-z\alpha_2=\tau_2\left(\s_1^{-1}\right) ,$$ 
for generic parameters $\alpha_1,\alpha_2$. The coincidence occurs precisely when $\alpha_2=z^{-1}\left(a-a^{-1}\right)\alpha_1$, which is the condition for being a classical Markov trace.
\end{example}

\subsubsection{Multiplicativity}

 Let $\{\alpha_n\}_{n\geq1}$ be generic parameters. For any $R$--linear combination $c(\alpha_1,\alpha_2,\dots)$ in the parameters $\{\alpha_n\}_{n\geq1}$, we denote by $pf(c)$ the linear combination given by
\[pf(c)(\alpha_1,\alpha_2,\dots)=c(\alpha_2,\alpha_3,\dots) ,\]
that is, $pf(c)$ is obtained from $c$ by pushing forward by $1$ the indices of the parameters.

Let $(\tau_n)_{n\geq1}$ be an arbitrary transverse Markov trace on $(H_n)_{n\geq1}$ with parameters $\{\alpha_n\}_{n\geq1}$. Note that, for $n\geq 1$ and $u\in H_n$, $\tau_n(u)$ is a linear combination of $\alpha_1,\dots,\alpha_n$. Thus, for any $n\geq 1$ and  any $u\in H_n$, $pf\left(\tau_n(u)\right)$ is well--defined.

\begin{lem}\label{lem-pf}
The transverse Markov trace $(\tau_n)_{n\geq1}$ with parameters $\{\alpha_n\}_{n\geq1}$ is given by the following formulas:
\begin{equation}\label{rhon-pf}
\begin{array}{rcll}
\tau_1(1)&=&\alpha_1, & \\
\tau_n(u\s_{n-1}v)&=&a\tau_{n-1}(uv)\quad & \text{for any $n\geq2$ and $u,v\in H_{n-1}$,}\\
\tau_n(u)&=&pf\left(\tau_{n-1}(u)\right)\quad & \text{for any $n\geq2$ and $u\in H_{n-1}$.}
\end{array}
\end{equation}
\end{lem}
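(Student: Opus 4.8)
The plan is to notice that the first two lines of \eqref{rhon-pf} cost nothing, so that all the work sits in the third. The first line is just the defining value of $\tau_1$, while the second line is the special case $k=n-1$ of Formula \eqref{rhon}, which was already established for every transverse Markov trace during the proof of Theorem~\ref{classification}. Hence the genuine content is the identity $\tau_n(u)=pf(\tau_{n-1}(u))$ for $u\in H_{n-1}$. I would also record, once this is proved, that the three formulas determine $\tau_n$ on all of $H_n$: a basis element of $H_n$ either lies in $H_{n-1}$, where the third formula lowers $n$, or equals $b\s_{n-1}\cdots\s_i$ with $b\in\mathbf{b}^H_{n-1}$, which is of the shape $u\s_{n-1}v$ with $v=\s_{n-2}\cdots\s_i\in H_{n-1}$, where the second formula lowers $n$; the descent stops at $\tau_1(1)=\alpha_1$.

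To prove the third formula, I would first reduce to basis elements. Since $\tau_n$, $\tau_{n-1}$ and $pf$ are all $R$-linear and $pf$ acts only by relabelling $\alpha_j\mapsto\alpha_{j+1}$ while fixing the coefficients in $R$, it suffices to check $\tau_n(b)=pf(\tau_{n-1}(b))$ for each $b\in\mathbf{b}^H_{n-1}$. I would then argue by induction on $n$, the case $n=2$ being immediate because $H_1=R\cdot 1$ gives $\tau_2(c\cdot 1)=c\alpha_2=pf(c\alpha_1)=pf(\tau_1(c\cdot 1))$ for $c\in R$.

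For the inductive step I would split $\mathbf{b}^H_{n-1}$ according to the two descriptions recalled before Theorem~\ref{classification}. If $b=1$, then $\tau_n(1)=\alpha_n=pf(\alpha_{n-1})=pf(\tau_{n-1}(1))$ directly. Otherwise $b=b'\s_k\cdots\s_i$ with $b'\in\mathbf{b}^H_k$ and $k\leq n-2$; applying \eqref{rhon-base} once in $H_n$ and once in $H_{n-1}$ (both legitimate since $k\leq n-2$ keeps $b$ a genuine basis element in each case) gives
\[
\tau_n(b)=a\,\tau_{n-1}(b'\s_{k-1}\cdots\s_i),\qquad \tau_{n-1}(b)=a\,\tau_{n-2}(b'\s_{k-1}\cdots\s_i).
\]
The element $b'\s_{k-1}\cdots\s_i$ lies in $H_k\subseteq H_{n-2}$, so the induction hypothesis yields $\tau_{n-1}(b'\s_{k-1}\cdots\s_i)=pf\bigl(\tau_{n-2}(b'\s_{k-1}\cdots\s_i)\bigr)$, and since $pf$ is $R$-linear with $a\in R$ fixed by $pf$, I would conclude $\tau_n(b)=a\,pf(\tau_{n-2}(\cdots))=pf(a\,\tau_{n-2}(\cdots))=pf(\tau_{n-1}(b))$.

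The argument is essentially bookkeeping, and the one delicate point — the main obstacle — is precisely this bookkeeping: one must make sure that \eqref{rhon-base} strips exactly one generator in both $H_n$ and $H_{n-1}$ so that the remaining element genuinely drops into a Hecke subalgebra low enough for the induction hypothesis to apply, and that no parameter dependence is accidentally swept into the scalar $a$ when commuting it past $pf$. Once these two points are watched, the induction closes.
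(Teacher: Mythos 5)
Your proof is correct and follows essentially the same route as the paper: reduce the third formula to basis elements, handle $u=1$ directly, and for $u=u_1\s_k u_2$ with $k\leq n-2$ strip the generator $\s_k$ via \eqref{rhon} in both $H_n$ and $H_{n-1}$, then apply the induction hypothesis to $u_1u_2\in H_{n-2}$ and commute the scalar $a$ past $pf$. The bookkeeping points you flag are exactly the ones the paper's proof relies on, so nothing is missing.
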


\begin{proof} The two first equations in (\ref{rhon-pf}) are prescribed by the definition of $(\tau_n)_{n\geq1}$. Let $n\geq2$. It is enough to prove the third equation for $u$ an element of the standard basis of $H_{n-1}$. If $u=1$, we indeed have:
$$\tau_n(1)=\alpha_n=pf(\alpha_{n-1})=pf\left(\tau_{n-1}(1)\right) .$$
Now let $u$ be a basis element of $H_{n-1}$ different from 1. For $n=2$ we have nothing more to prove. We proceed then by induction on $n$. Let $n\geq3$. As $u\neq 1$, we have $u=u_1\s_k u_2$, where $k\in\{1,\dots,n-2\}$ and $u_1,u_2\in H_{k}$. We have:
\begin{multline*}
\tau_n(u)=\tau_n(u_1\s_ku_2)=a\tau_{n-1}(u_1u_2)=a \cdot pf\left(\tau_{n-2}(u_1u_2)\right)\\
=pf\left(\tau_{n-1}(u_1\s_k u_2)\right)=pf\left(\tau_{n-1}(u)\right),
\end{multline*}
where we used the defining formula \eqref{rhon} and the induction hypothesis.
\end{proof}

\begin{prop}\label{prop-mult}
Let $(\tau_n)_{n\geq1}$ be a transverse Markov trace with parameters $\{\alpha_n\}_{n\geq1}$. It is multiplicative if and only if
\begin{equation}\label{mult-cond}
\alpha_n=\alpha_1^n, \qquad \text{for all $n\geq 2$.}
\end{equation}
\end{prop}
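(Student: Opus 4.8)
The plan is to prove the two implications separately, the converse being the substantial one. The forward implication is immediate: assuming $(\tau_n)_{n\geq1}$ multiplicative and evaluating the multiplicativity property on $u_1=u_2=1$, the relation $1\otimes 1=1$ in $H_{m+n}$ gives
$$\alpha_{m+n}=\tau_{m+n}(1)=\tau_m(1)\tau_n(1)=\alpha_m\alpha_n,\qquad m,n\geq1.$$
Taking $n=1$ yields $\alpha_{m+1}=\alpha_m\alpha_1$, so an immediate induction starting from $\alpha_1=\alpha_1^1$ gives $\alpha_n=\alpha_1^n$ for all $n$, in particular for $n\geq2$.

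For the converse I would assume $\alpha_n=\alpha_1^n$ and exploit the recursive description of Lemma \ref{lem-pf}. The key observation is that, by construction, for $u\in H_{n-1}$ the value $\tau_{n-1}(u)$ is a constant-free, degree-one linear combination $\sum_k c_k\alpha_k$ of the parameters (every step of the recursion \eqref{rhon-pf} either multiplies by $a\in R$ or applies $pf$, both of which preserve this shape). Since $pf$ shifts $\alpha_k\mapsto\alpha_{k+1}$, the substitution $\alpha_k=\alpha_1^k$ turns it into $\sum_k c_k\alpha_1^{k+1}=\alpha_1\sum_k c_k\alpha_1^k=\alpha_1\,\tau_{n-1}(u)$. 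Thus under this specialization the third line of \eqref{rhon-pf} becomes exactly the inclusion property $\tau_n(u)=\alpha_1\,\tau_{n-1}(u)$ for $u\in H_{n-1}$, with $d=\tau_1(1)=\alpha_1$. So the specialized trace satisfies positive stabilization together with inclusion with $d=\tau_1(1)$.

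It then remains to deduce multiplicativity from positive stabilization and inclusion. I would rerun the induction already used just after the definition of a Markov trace (establishing that inclusion with $d=t_1(1)$ implies multiplicativity), with two adjustments: the inclusion parameter is $\alpha_1$, and only positive stabilization is available. The latter restriction is harmless for the Hecke algebra because a basis element $u_2\in H_{n_2}$ of $\mathbf{b}^H_{n_2}$ is either already in $H_{n_2-1}$ (case $\epsilon=0$, handled by inclusion) or of the form $v_2\sigma_{n_2-1}w_2$ with $v_2,w_2\in H_{n_2-1}$ (case $\epsilon=1$, handled by positive stabilization); the exponent $\epsilon=-1$ never appears, so negative stabilization is never needed. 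Writing $u_1\otimes u_2=(u_1\otimes v_2)\sigma_{n-1}^{\epsilon}(1\otimes w_2)$ in $H_n$ and inducting on $n_2$ (base case $n_2=1$, where $u_2=1$ and inclusion gives the claim) gives
$$\tau_n(u_1\otimes u_2)=\alpha_1^{\,\overline{1-\epsilon}}a^{\epsilon}\,\tau_{n-1}(u_1\otimes v_2w_2)=\alpha_1^{\,\overline{1-\epsilon}}a^{\epsilon}\,\tau_{n_1}(u_1)\tau_{n_2-1}(v_2w_2)=\tau_{n_1}(u_1)\tau_{n_2}(u_2),$$
the last equality applying inclusion (if $\epsilon=0$) or positive stabilization (if $\epsilon=1$) at level $n_2$.

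The one genuinely delicate point, and the step I would check most carefully, is the interaction between the formal operation $pf$ and the specialization $\alpha_k=\alpha_1^k$: one must be certain that $\tau_{n-1}(u)$ carries no constant term and is linear in the $\alpha_k$, so that $pf$ specializes cleanly to multiplication by $\alpha_1$ (rather than, say, producing cross terms). As noted above this is forced by the structure of \eqref{rhon-pf}, and once it is granted the remainder is the routine bookkeeping of the stabilization/inclusion induction already carried out in the paper.
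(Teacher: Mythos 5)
Your proof is correct, but the sufficiency direction takes a genuinely different route from the paper's. The paper reduces $u_2$ by the algorithm of Theorem \ref{classification} and applies Lemma \ref{lem-pf} in the iterated form $\tau_{n_1+k}(u_1)=pf^k\left(\tau_{n_1}(u_1)\right)$ to obtain the identity $\tau_{n_1+n_2}(u_1\otimes u_2)=\sum_{i,j}\lambda_i\mu_j\alpha_{i+j}$, valid for arbitrary parameters; comparing this with $\sum_{i,j}\lambda_i\mu_j\alpha_i\alpha_j$ settles sufficiency (and necessity is obtained, as in your first paragraph, from $u_1=u_2=1$). You instead use Lemma \ref{lem-pf} only through its third line, observe that under the specialization $\alpha_k=\alpha_1^k$ the operator $pf$ acts as multiplication by $\alpha_1$ --- legitimate precisely because $\tau_{n-1}(u)$ is a constant-free $R$-linear combination of the $\alpha_k$ with coefficients independent of the parameters, a point you rightly single out --- and conclude that the specialized trace satisfies the inclusion property with $d=\tau_1(1)=\alpha_1$; you then rerun the ``stabilization $+$ inclusion $\Rightarrow$ multiplicativity'' induction from Section 2.1, correctly noting that for the basis $\mathbf{b}^H_{n_2}$ only $\epsilon\in\{0,1\}$ occurs, so negative stabilization is never invoked. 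Both arguments are sound and of comparable length; the paper's yields the reusable bilinear formula $\sum\lambda_i\mu_j\alpha_{i+j}$ for arbitrary parameters, while yours makes explicit the cleaner structural fact that, for transverse Markov traces on the Hecke tower, multiplicativity is equivalent to the inclusion property with $d=\alpha_1$.
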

\begin{proof} 
Let $n_1,n_2\in\Z_{>0}$, $u_1\in H_{n_1}$ and $u_2\in H_{n_2}$. By taking $(n_1,n_2)=(n-1,1)$ and $u_1=u_2=1$, we obtain that the multiplicativity implies $\alpha_n=\alpha_{n-1}\alpha_1$ for any $n\geq 2$, and therefore the condition (\ref{mult-cond}) is necessary.

Besides, we have in general that $\tau_{n_i}(u_i)$ is a linear combination of the parameters $\alpha_1,\dots,\alpha_{n_i}$ ($i=1,2$). Thus let,
\begin{equation}
\tau_{n_1}(u_1)=\lambda_1\alpha_1+\cdots+\lambda_{n_1}\alpha_{n_1}\quad \text{and}\quad \tau_{n_2}(u_2)=\mu_1\alpha_1+\cdots+\mu_{n_2}\alpha_{n_2} .
\label{multu2}
\end{equation}
To express $\tau_{n_1+n_2}(u_1\otimes u_2)$, we start by applying the transverse Markov condition to reduce $u_2$. Indeed Equation \eqref{multu2} just means that the algorithm applied to compute $\tau_{n_2}(u_2)$ allows to reduce it to $\mu_1 \tau_1(1) + \cdots + \mu_{n_2} \tau_{n_2}(1)$, which in turns straightforwardly implies that the same algorithm leads to:
\[\tau_{n_1+n_2}(u_1\otimes u_2)=\mu_1\tau_{n_1+1}(u_1\otimes 1)+\mu_2\tau_{n_1+2}(u_1 \otimes 1)+\cdots+\mu_{n_2}\tau_{n}(u_1 \otimes 1) .\]
Then we use Lemma \ref{lem-pf}, namely that $\tau_{n_1+k}(u_1)=pf^k\left(\tau_{n_1}(u_1)\right)$, to obtain
\begin{multline*}
\tau_{n_1+n_2}(u_1\otimes u_2)= \mu_1 pf\left(\tau_{n_1}(u_1)\right)+\cdots+\mu_{n_2}pf^{n_2}\left(\tau_{n_1}(u_1)\right) = \\
\mu_1 (\lambda_1 \alpha_2 + \cdots + \lambda_{n_1} \alpha_{n_1+1})+\cdots+\mu_{n_2}(\lambda_1 \alpha_{n_2+1} + \cdots + \lambda_{n_1} \alpha_{n}) = \sum_{i=1}^{n_1} \sum_{j=1}^{n_2} \lambda_i\mu_j\alpha_{i+j} ,
\end{multline*}
and this must be equal, if $\{\tau_n\}_{n\geq1}$ is multiplicative, to $\displaystyle \sum_{i=1}^{n_1} \sum_{j=1}^{n_2}\lambda_i\mu_j\alpha_{i}\alpha_j$. Therefore, for all $i,j \geq 1$, we must have $\alpha_{i+j} = \alpha_i \alpha_j$ and hence the condition \eqref{mult-cond} is also sufficient.
\end{proof} 

\begin{rem} Note that we do not loose any information by restricting to multiplicative transverse Markov traces. Indeed, let $\alpha_1$ be generic, then the family of parameters $\{\alpha_1^n\}_{n\geq 1}$ gives rise to a multiplicative Markov trace $(\tau_n)_{n\geq1}$ and for any $u \in H_n$, $\tau_n(u)$ is simply a degree $n$ polynomial in $\alpha_1$. We can then obtain the values of the basic transverse Markov trace $\tau^{(k)}_n(u)$ for any $k\geq 1$ as the coefficient of $\alpha_1^k$ in $\tau_n(u)$.
\end{rem}

\subsubsection{Inequalities} 

As explained in the introduction, the Morton-Franks-Williams inequality relating the degree in $a$ of the HOMFLY--PT polynomial and the self-linking number can be seen as directly related to the classification of transverse Markov traces on the Hecke algebras. This is why we include here a proof derived from the classification of the previous subsection.\\

Recall that $B_n$ is the braid group on $n$ strands, and let us denote by $\bs_1,\dots,\bs_{n-1}$ the standard braid generators in $B_n$. They project onto the generators $\s_1, \dots, \s_{n-1}$ of the Hecke algebra $H_n$ via $\overline{\pi}$ and onto the generators $s_1, \dots, s_{n-1}$ of the BMW algebra $BMW_n$ via $\pi$. For an arbitrary braid $\bu\in B_n$, we write $\bu$ as a word in the generators and their inverses $\bu=\bs_{i_1}^{\epsilon_1}\dots\bs_{i_N}^{\epsilon_N}$. Let us also recall here two numerical invariants of braids, which are respectively the braid index and the writhe:
\[i(\bu)=n\qquad \text{and}\qquad w(\bu)=\epsilon_1+\dots\epsilon_N .\]
We note that $w(\bu)$ is indeed independent of the expression of $\bu$ as a word in the generators as the braid relations are homogeneous.

We consider generic parameters $\{\alpha_n\}_{n\geq1}$. For a non-zero linear combination in the parameters $\{\alpha_n\}_{n\geq1}$ with coefficients in $R$, we define
\[e(P_1\alpha_1+P_2\alpha_2+\dots)=\text{min}\{-\deg_{a}(P_k)-k\ |\ k\geq1\ \text{such that}\ P_k\neq 0\} ,\]
where $\deg_{a}(P)$, for a Laurent polynomial $P$ in $a$, is the maximal power of $a$ appearing in $P$. By convention, we set $e(0)=-\infty$.

Let $(\tau_n)_{n\geq1}$ be the transverse Markov trace on $\{H_n\}_{n\geq1}$ with family of parameters $\{\alpha_n\}_{n\geq1}$. For $\bu\in B_n$, we set
\begin{equation}\label{dbeta}
d(\bu)=e\left(\tau_n(u)\right) ,
\end{equation}
where for simplicity we denote by $u$ the image $\overline{\pi}(\bu)$ of $\bu$ in $H_n$.

\begin{rems}\label{Ineqrks} Let us observe the following facts.

\begin{enumerate}
\item For $k\geq1$, let us recall the basic transverse Markov traces $\left(\tau_n^{(k)}\right)_{n\geq1}$ and define, for any $\bu\in B_n$, 
\[d^{(k)}(\bu)=
-\deg_{a}\left(\tau_n^{(k)}(u)\right)-k,\qquad  \text{if $\tau_n^{(k)}(u)\neq0$.}\]
Then, by definition, we have
\[d(\bu)=\text{min}\{d^{(k)}(\bu)\ |\ k\geq1\ \text{such that}\ \tau_n^{(k)}(u)\neq 0\} .\]
\item Assume that the parameters $\{\alpha_n\}_{n\geq1}$ of the transverse Markov trace $(\tau_n)_{n\geq1}$ are specialized in $R$ in such a way that $\deg_{a}(\alpha_k)=k$ for every $k\geq1$. Then we have 
$$d^{(k)}(\bu)= -\deg_{a}\left(\tau_n^{(k)}(u)\right) - \deg_{a}(\alpha_k) = - \deg_{a}\left(\tau_n^{(k)}(u)\alpha_k \right) $$
and hence
\[d(\bu)\leq -\deg_{a}\left(\tau_n(u)\right) .\]
In particular, the classical trace $t_n^H$ satisfies this condition so that we have
\[d(\bu)\leq - \deg_{a}\left(t_n^H(u)\right) ,\]
where $\deg_{a}\left(t_n^H(u)\right)$ is then, by Equation \eqref{HOMFLYtH}, nothing but $\deg_{a}\left(P(\widehat{\bu})\right) + w(\bu) $, i.e. the minimal power of $a$ appearing in the HOMFLY--PT polynomial of $\widehat{\bu}$ plus its writhe $w(\widehat{\bu}) = w(\bu)$.
\end{enumerate}
\end{rems}

\begin{thm}
For every braid $\bu$, we have:
\begin{equation}\label{inequality1}
-i(\bu)\leq d(\bu) .
\end{equation}
\end{thm}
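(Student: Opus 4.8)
The plan is to translate \eqref{inequality1} into a bound on the $a$-degrees of the coefficients of $\tau_n$. Expanding the transverse Markov trace in the basic traces as in \eqref{arbitrary}, write $\tau_n(u)=\sum_{k\ge1}\tau_n^{(k)}(u)\,\alpha_k$ with $u=\overline{\pi}(\bu)$. Unravelling the definitions of $e$ and of $d$, together with the description in Remarks \ref{Ineqrks}, it suffices to establish the assertion
\[
\deg_a\bigl(\tau_n^{(k)}(u)\bigr)\le n-k\qquad\text{for all }k\ge1 .
\]
Indeed this yields $d^{(k)}(\bu)=-\deg_a\bigl(\tau_n^{(k)}(u)\bigr)-k\ge-n$ for every admissible $k$, and hence $d(\bu)=\min_k d^{(k)}(\bu)\ge-n=-i(\bu)$.

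I would prove this degree bound by induction on $n$. Since the defining relations \eqref{H1}--\eqref{H4} of $H_n$ do not involve $a$, the element $\overline{\pi}(\bu)$ expands in the basis $\mathbf{b}_n^H$ with coefficients in $\C[q^{\pm1},z^{\pm1}]$; I therefore establish the bound for every such ``$a$-free'' element $u\in H_n$, a class stable under the decomposition below. The base case $n=1$ is immediate, as $\tau_1(c\cdot1)=c\,\alpha_1$ with $\deg_a(c)=0$. For the inductive step, decompose $u$ along the inductive basis: there are $a$-free elements $u_0,u_1,\dots,u_{n-1}\in H_{n-1}$ with
\[
u=u_0+\sum_{i=1}^{n-1}u_i\,\sigma_{n-1}\sigma_{n-2}\cdots\sigma_i .
\]
Applying Lemma \ref{lem-pf} to the first summand gives $\tau_n(u_0)=pf\bigl(\tau_{n-1}(u_0)\bigr)$, while the recursive Formula \eqref{rhon} with $k=n-1$ gives $\tau_n\bigl(u_i\,\sigma_{n-1}\sigma_{n-2}\cdots\sigma_i\bigr)=a\,\tau_{n-1}\bigl(u_i\,\sigma_{n-2}\cdots\sigma_i\bigr)$ for each remaining summand, so that
\[
\tau_n(u)=pf\bigl(\tau_{n-1}(u_0)\bigr)+a\sum_{i=1}^{n-1}\tau_{n-1}\bigl(u_i\,\sigma_{n-2}\cdots\sigma_i\bigr) .
\]

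It then remains to track the $a$-degree of the coefficient of $\alpha_k$ on the right-hand side. The operator $pf$ carries the coefficient of $\alpha_{k-1}$ in $\tau_{n-1}(u_0)$ to the coefficient of $\alpha_k$ without altering $a$, so the induction hypothesis bounds its $a$-degree by $(n-1)-(k-1)=n-k$; meanwhile the explicit factor $a$ raises the $a$-degree of each $\tau_{n-1}^{(k)}\bigl(u_i\,\sigma_{n-2}\cdots\sigma_i\bigr)$, itself at most $(n-1)-k$, to at most $n-k$. Hence $\deg_a\bigl(\tau_n^{(k)}(u)\bigr)\le n-k$ and the induction closes. The main point to watch --- the heart of the argument --- is precisely this bookkeeping: the pushforward term gains no power of $a$ but improves the parameter index by one, whereas the stabilization term gains exactly one power of $a$ while preserving the index, and the two effects conspire to give the same bound $n-k$. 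No delicate estimate is needed once the basis decomposition and the two recursion rules are in place; the one thing requiring care is that the change of basis introduces no spurious powers of $a$, which is guaranteed by the $a$-independence of the Hecke relations.
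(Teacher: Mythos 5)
Your argument is correct, but it takes a genuinely different route from the paper. The paper proves the inequality by induction on the length $N$ of the braid word: it first reduces to positive braid words by expanding $\s_{i_j}^{-1}=\s_{i_j}-z$ and applying the induction hypothesis to the resulting shorter subwords, and then distinguishes two cases according to whether the image of $\bu$ in $S_n$ is reduced (so that the top generator $\bs_k$ occurs once and Formula \eqref{rhon} drops the computation to $B_{n-1}$, costing exactly one power of $a$) or not (so that a square $\bs_k^2$ appears and the quadratic relation \eqref{H3} shortens the word). You instead strengthen the statement to the coefficientwise degree bound $\deg_a\bigl(\tau_n^{(k)}(u)\bigr)\le n-k$, valid for every element of $H_n$ whose coordinates in $\mathbf{b}_n^H$ contain no $a$ (a class containing all $\overline{\pi}(\bu)$ since the Hecke relations are $a$-free), and prove it by induction on the number of strands via the inductive basis decomposition $u=u_0+\sum_i u_i\s_{n-1}\cdots\s_i$, combining Formula \eqref{rhon} with the pushforward identity of Lemma \ref{lem-pf}. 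What your version buys is the elimination of both reductions (to positive words and to the reduced/non-reduced dichotomy) and a transparent accounting of why the two recursion mechanisms give the same bound: the $pf$ term shifts the parameter index up by one at no cost in $a$, while the stabilization term costs one power of $a$ at no shift in index. The paper's proof stays closer to the braid word itself and needs no auxiliary class of ``$a$-free'' elements. One shared blemish, not specific to your argument: with the paper's convention $e(0)=-\infty$, both proofs implicitly require $\tau_n(u)\neq 0$ (guaranteed for braid words, e.g.\ via \eqref{tau-basic} and the nonvanishing of the HOMFLY--PT polynomial), or else the convention should read $e(0)=+\infty$; your formulation in terms of $\deg_a(\tau_n^{(k)}(u))\le n-k$ with $\deg_a(0)=-\infty$ is actually the cleaner statement.
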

\begin{proof} Fix $n\geq1$ and let $\bu\in B_n$ with $\bu=\bs_{i_1}^{\epsilon_1}\dots\bs_{i_N}^{\epsilon_N}$
and $\epsilon_1,\dots,\epsilon_N=\pm1$. We will proceed by induction on $N$.

If $N=0$, then $\bu=1\in B_n$ and we have $-i(\bu)=-n$. On the other hand, we have $\tau_n(u)=\tau_n(1)=\alpha_n$. Thus, by the defining formula \eqref{dbeta}, we have $d(\bu)=-n$ as well.

So let $n\geq 2$ and $N>0$. We will first observe that it is sufficient to prove the Theorem in the case where all the $\epsilon_i$ are positive. Indeed, if we denote by $N_-$ the number of exponents among $\epsilon_1,\dots,\epsilon_N$ equal to $-1$, then, using the relation \eqref{H4} holding in the Hecke algebra $H_n$, we can rewrite $\s_{i_j}^{-1}$ as $\s_{i_j}-z$ and hence we have
\[u=\s_{i_1}^{\epsilon_1}\dots \s_{i_N}^{\epsilon_N}=\s_{i_1}\dots\s_{i_N}+\sum_{k=1}^{N_-}(-z)^k v_k,\]
where $v_k$ is the sum of all possible subwords of $\s_{i_1}\dots\s_{i_N}$ of length $N-k$ where we forget $k$ of the generators that appear in the expression of $\bu$ with negative exponent. We can apply the induction hypothesis to the $v_k$'s as their length is strictly smaller than $N$ and we have, for each $k=1,\dots,N_-$,
\[e\left(\tau_n( v_k)\right)\geq -n ,\]
and therefore
\[e\left(\tau_n\left(\sum_{k=1}^{N_-}(-z)^k v_k\right)\right)\geq -n .\]

Thus, it just remains to show that
\[d(\bs_{i_1}\dots\bs_{i_N}) = e\left(\tau_n(\s_{i_1}\dots\s_{i_N})\right)\geq -n \]
and hence we can restrict ourselves to proving the Theorem for elements of the positive braid monoid.

So now we assume that $\bu=\bs_{i_1}\dots \bs_{i_N}\in B_n$, with $N>0$, and we shall prove (\ref{inequality1}) in this case. We will distinguish two cases depending if the image of $\bu$ in the symmetric group through the projection $B_n \rightarrow S_n$, $\bs_i \mapsto (i,i+1)$, is a reduced element or not.

$\bullet$ Case $(1)$: assume that the image of $\bu$ in the symmetric group is a reduced element, and  let $k\in\{1,\dots,n-1\}$ be the largest index value such that $\bs_k$ appears in the word of $\bu$. Then, with the use of the braid relations, it is possible to rewrite $\bu$ as a word where $\bs_k$ appears only once. Therefore, we can assume that $\bs_k$ appears only once in the word of $\bu$. Now, let $\bu'$ be the word obtained from $\bs_{i_1}\dots \bs_{i_N}$ be removing $\bs_k$. Using the recursive formula (\ref{rhon}) for a transverse Markov trace, we have
\[\tau_n(u)=a\tau_{n-1}(u') .\]
Consequently, we consider the word $\bu'$ as an element of $B_{n-1}$ which is of length strictly smaller than $N$ so we use the induction hypothesis to get that
\[d(\bu)=d(\bu') - 1\geq-(n-1) -1=-n .\]

$\bullet$ Case $(2)$: assume that the image of $\bu$ in the symmetric group is not a reduced element. Again, using the braid relations, we can assume that, for some $k\in\{1,\dots,n-1\}$, two $\bs_k$ are consecutive in the word representing $\bu$. So let now
\[\bu=\bs_{i_1}\dots \bs_{i_l}\bs_k^2\bs_{i_{l+3}}\dots \bs_{i_N} ,\]
and using the quadratic relation \eqref{H3} in $H_n$, we obtain
\[u=\s_{i_1}\dots \s_{i_l}\s_{i_{l+3}}\dots \s_{i_N}+ z\s_{i_1}\dots \s_{i_l}\s_k\s_{i_{l+3}}\dots \s_{i_N} .\]
Now we use the induction hypothesis which asserts that
\[\left\{\begin{array}{l}
e\left(\tau_n(\s_{i_1}\dots \s_{i_l}\s_{i_{l+3}}\dots \s_{i_N})\right)\geq -n ,\\
e\left(\tau_n(\s_{i_1}\dots \s_{i_l}\s_k\s_{i_{l+3}}\dots \s_{i_N})\right)\geq -n .\end{array}\right.\]
This yields to
\[d(\bu)\geq -n=-i(\bu) ,\]
which concludes the proof.
\end{proof}

As a direct corollary of this theorem and of the second point of Remarks \ref{Ineqrks}, we get back the Francks-Morton-Williams inequality \cite{FrWi}, \cite{MorIne}. 

\begin{cor}
For every braid $\bu$, we have:
\begin{equation}\label{inequality}
w(\bu)-i(\bu)\leq - \deg_{a}\left(P(\widehat{\bu})\right) .
\end{equation}
\end{cor}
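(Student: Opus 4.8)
The plan is to obtain \eqref{inequality} by chaining the preceding Theorem with the second point of Remarks \ref{Ineqrks}; essentially no new argument is needed beyond combining results already established, which is why this is stated as a direct corollary.

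First I would record the two bounds on the quantity $d(\bu)$. On the one hand, the Theorem gives the lower bound $-i(\bu)\leq d(\bu)$, valid for the transverse Markov trace attached to any family of parameters $\{\alpha_n\}_{n\geq1}$. On the other hand, specializing the parameters so that $\deg_{a}(\alpha_k)=k$ for all $k$ produces, by the second point of Remarks \ref{Ineqrks}, the upper bound $d(\bu)\leq -\deg_{a}\left(\tau_n(u)\right)$. The natural choice here is the classical HOMFLY--PT trace $t_n^H$, whose parameters are $\alpha_k=(\delta^H)^k$ and hence satisfy $\deg_{a}(\alpha_k)=k$ since $\deg_{a}(\delta^H)=1$; this gives $d(\bu)\leq -\deg_{a}\left(t_n^H(u)\right)$.

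Next I would convert the right-hand side into a statement about the HOMFLY--PT polynomial. From the defining relation \eqref{HOMFLYtH} one has $t_n^H(u)=a^{w(\bu)}P(\widehat{\bu})$, so that $\deg_{a}\left(t_n^H(u)\right)=w(\bu)+\deg_{a}\left(P(\widehat{\bu})\right)$, exactly as recorded at the end of Remarks \ref{Ineqrks}. Substituting, the chain of inequalities reads
\[-i(\bu)\leq d(\bu)\leq -\deg_{a}\left(t_n^H(u)\right)=-w(\bu)-\deg_{a}\left(P(\widehat{\bu})\right),\]
and comparing the outer terms yields $w(\bu)-i(\bu)\leq -\deg_{a}\left(P(\widehat{\bu})\right)$, which is precisely \eqref{inequality}.

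There is no genuine obstacle here: the substantive content is contained in the Theorem, whose induction on the length of the braid word is the real step. The only point requiring a little care is the bookkeeping of the two degree conventions, namely verifying that the specialization hypothesis $\deg_{a}(\alpha_k)=k$ is indeed met by $t_n^H$ and that the normalization in \eqref{HOMFLYtH} correctly identifies $\deg_{a}\left(t_n^H(u)\right)$ with $w(\bu)+\deg_{a}\left(P(\widehat{\bu})\right)$. Once these are in place the corollary follows immediately.
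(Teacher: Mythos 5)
Your proposal is correct and follows exactly the route the paper intends: the corollary is obtained by chaining the theorem's lower bound $-i(\bu)\leq d(\bu)$ with the upper bound $d(\bu)\leq-\deg_a\left(t_n^H(u)\right)$ from the second point of Remarks \ref{Ineqrks}, together with the identification $\deg_a\left(t_n^H(u)\right)=w(\bu)+\deg_a\left(P(\widehat{\bu})\right)$ coming from \eqref{HOMFLYtH}. Your verification that $t_n^H$ satisfies the specialization hypothesis $\deg_a(\alpha_k)=k$ (since $\alpha_k=(\delta^H)^k$ and $\deg_a(\delta^H)=1$) is exactly the bookkeeping the paper leaves implicit.
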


\subsection{Transverse Markov traces on BMW algebras}\label{trtrBMW}

Cubical quotients of the group algebra of the braid group could a priori carry interesting transverse Markov traces. We prove in this section that in some sense the BMW algebra is too small to have such traces and that all transverse Markov traces on it are determined by the ones on the Hecke algebras and by the two-variable Kauffman polynomial.\\

Let $(\tau_n)_{n \geq1}$ be a transverse Markov trace on $(BMW_n)_{n\geq1}$.

\begin{lem}
Let $n,n_1,n_2\geq 1$. For any $v_1 \in BMW_{n_1}$, $v_2 \in BMW_{n_2}$ and $v\in BMW_n$, the following holds:
\begin{equation}
\tau_{n_1 + n_2}\left(v_1 sh^{n_1}(v_2) s_{n_1} \right) =  a  \tau_{n_1+n_2 -1}\left(v_1 sh^{n_1 -1}(v_2)\right),
\label{trtr2B}
\end{equation}
\begin{equation}
\tau_{n+2}(v e_{n+1}) = \delta  \tau_{n+1}(v e_{n}),
\label{trtr4}
\end{equation}
\begin{equation}
\tau_{n+2}\left(v s_{n+1}^{-1}\right) = -a \delta  \tau_{n}(v) +(a +z \delta)  \tau_{n+1}(v) + \delta \tau_{n+1}\left(v s_{n}^{-1}\right) - z \tau_{n+2}(v).
\label{trtr5}
\end{equation}
\end{lem}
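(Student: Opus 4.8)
The plan is to establish the three identities in the order \eqref{trtr2B}, \eqref{trtr4}, \eqref{trtr5}, because \eqref{trtr2B} is the engine that drives the other two: it is a form of positive stabilization in which the stabilizing crossing $s_{n_1}$ sits \emph{below} a whole block instead of at the top strand. Throughout I would use only the two defining properties of a transverse Markov trace (the trace property and the positive stabilization $\tau_{m+1}(\iota_m(u)g_m\iota_m(w))=a\tau_m(uw)$) together with the BMW relations; since negative stabilization is not available, every inverse generator must eventually be rewritten through \eqref{defei}, \eqref{BMW7} and \eqref{BMW9}, \eqref{BMW11}. For \eqref{trtr2B} itself I would argue by induction on $n_2$, with $N=n_1+n_2$. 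The base case $n_2=1$ is immediate: $v_2\in BMW_1=R$ is a scalar, $sh^{n_1}(v_2)=sh^{n_1-1}(v_2)=v_2$, and the claim is exactly positive stabilization at the top generator $s_{n_1}$ of $BMW_{n_1+1}$.

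For the inductive step I would expand $v_2$ in the inductive basis $\mathbf{b}^{BMW}_{n_2}$ of Proposition \ref{basisBMW}, so that $v_2$ is $\iota(b)$, $\iota(b)x_{n_2-1,i}$ or $y_{i,n_2-1}\iota(b)$ with $b\in BMW_{n_2-1}$. In the last two types the shifted element $sh^{n_1}(v_2)$ genuinely involves the top generator $s_{N-1}$, and the idea is to combine the braid relations, the trace (cyclic) property, and the conjugacy $u\sim sh(u)$ recalled in Subsection \ref{interplay} to slide the connecting crossing $s_{n_1}$ and the top of the block up to $s_{N-1}$, where positive stabilization peels off the top strand and replaces $sh^{n_1}(v_2)$ by $sh^{n_1-1}(v_2)$. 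The hard part will be precisely this step, for two reasons. First, because only positive stabilization is allowed, the factors $x_{n_2-1,i}=s_{n_2-1}^{-1}\cdots s_i^{-1}$ force inverse generators into the computation; these cannot be stabilized away and must be carried along or converted via \eqref{BMW7}, \eqref{BMW9} and \eqref{BMW11}, producing correction terms that then have to be absorbed by the induction hypothesis. Second, the component $v_2=\iota(b)\in BMW_{n_2-1}$ is genuinely awkward: here $sh^{n_1}(b)$ never reaches $s_{N-1}$, so $v_1\,sh^{n_1}(b)\,s_{n_1}$ lies in $BMW_{N-1}$ and cannot be stabilized as written; it must first be conjugated (using $s_{n_1}\sim s_{N-1}$ and the block-rotation element of Subsection \ref{interplay}) to expose a top crossing, and one must verify that this conjugation interacts correctly with $v_1$ across the boundary generator $s_{n_1-1}$. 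Making all these conjugations and inverse-generator corrections cancel is the bulk of the proof.

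Granting \eqref{trtr2B}, identity \eqref{trtr4} is short. Applying \eqref{trtr2B} with $n_1=n$, $n_2=2$ and $v_2=e_1$, so that $sh^{n}(e_1)=e_{n+1}$ and $sh^{n-1}(e_1)=e_n$, gives $\tau_{n+2}(v\,e_{n+1}s_n)=a\,\tau_{n+1}(v\,e_n)$. Then I would use \eqref{BMW4'}, namely $e_{n+1}s_ne_{n+1}=a\,e_{n+1}$, to write $v\,e_{n+1}=a^{-1}v\,e_{n+1}s_ne_{n+1}$, move the trailing $e_{n+1}$ to the front by the trace property, commute it past $v$ (legitimate since $v\in BMW_n$ while $e_{n+1}$ involves only $s_{n+1}$), and collapse $e_{n+1}^2=\delta\,e_{n+1}$ by \eqref{BMW12}. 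This gives $\tau_{n+2}(v\,e_{n+1})=a^{-1}\delta\,\tau_{n+2}(v\,e_{n+1}s_n)=\delta\,\tau_{n+1}(v\,e_n)$, which is \eqref{trtr4}.

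Finally, \eqref{trtr5} follows from \eqref{trtr4} by a mechanical expansion. Writing $s_{n+1}^{-1}=s_{n+1}-z+z\,e_{n+1}$ via \eqref{BMW7}, I would evaluate $\tau_{n+2}(v\,s_{n+1}^{-1})$ term by term: positive stabilization gives $\tau_{n+2}(v\,s_{n+1})=a\,\tau_{n+1}(v)$, and \eqref{trtr4} gives $\tau_{n+2}(v\,e_{n+1})=\delta\,\tau_{n+1}(v\,e_n)$. Expanding this last $e_n$ through \eqref{defei} and using positive stabilization again for $\tau_{n+1}(v\,s_n)=a\,\tau_n(v)$ yields $\tau_{n+1}(v\,e_n)=\tau_{n+1}(v)+z^{-1}\tau_{n+1}(v\,s_n^{-1})-z^{-1}a\,\tau_n(v)$. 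Substituting everything back and collecting the coefficients of $\tau_n(v)$, $\tau_{n+1}(v)$, $\tau_{n+1}(v\,s_n^{-1})$ and $\tau_{n+2}(v)$ reproduces exactly the right-hand side of \eqref{trtr5}, so this step requires no ingenuity beyond bookkeeping.
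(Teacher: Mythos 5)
Your treatments of \eqref{trtr4} and \eqref{trtr5} are correct and coincide with the paper's: \eqref{trtr4} via $e_{n+1}=a^{-1}e_{n+1}s_ne_{n+1}$, cyclicity, commuting $e_{n+1}$ past $v$, $e_{n+1}^2=\delta e_{n+1}$ and then \eqref{trtr2B}; and \eqref{trtr5} by expanding $s_{n+1}^{-1}$ and $e_n$ through \eqref{BMW7} and \eqref{defei} --- your bookkeeping there checks out. The genuine gap is in \eqref{trtr2B}, which you only outline. You propose an induction on $n_2$ over the inductive basis of $BMW_{n_2}$, and you explicitly defer what you yourself call ``the bulk of the proof'' (absorbing the inverse-generator corrections and handling the component $v_2=\iota(b)$); as written, the key identity is not established. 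Worse, the correction scheme you sketch --- rewriting the $s^{-1}$'s occurring in $x_{n_2-1,i}$ via \eqref{BMW7} --- would generate $e$-terms whose traces you cannot yet control, since controlling them is exactly what \eqref{trtr4} provides, and \eqref{trtr4} rests on \eqref{trtr2B}; you would be flirting with circularity.

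What you are missing is that the conjugation you mention in passing already does the whole job, uniformly in $v_2$, with no induction, no basis decomposition, and no inverse-generator corrections. Write $N=n_1+n_2$. Since $y_{n_1,N-1}=s_{n_1}\cdots s_{N-1}$ conjugates $s_j$ to $s_{j+1}$ for $n_1\leq j\leq N-2$, one has $sh^{n_1}(v_2)\,y_{n_1,N-1}=y_{n_1,N-1}\,sh^{n_1-1}(v_2)$ for \emph{every} $v_2\in BMW_{n_2}$, however it is expressed: conjugation is an algebra morphism, so the presence of negative crossings or of $e$'s inside $v_2$ is irrelevant. Now conjugate $v_1\,sh^{n_1}(v_2)\,s_{n_1}$ by $x_{N-1,n_1+1}$ (which commutes with $v_1$) using the trace property, note $s_{n_1}\,y_{n_1+1,N-1}=y_{n_1,N-1}$, and apply the displayed identity: the argument of $\tau_N$ becomes $v_1\,x_{N-1,n_1+1}\,y_{n_1,N-1}\,sh^{n_1-1}(v_2)=v_1\,y_{n_1,N-2}\,s_{N-1}\,x_{N-2,n_1}\,sh^{n_1-1}(v_2)$, where everything except the single $s_{N-1}$ lies in $BMW_{N-1}$. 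One positive stabilization and the cancellation $y_{n_1,N-2}\,x_{N-2,n_1}=1$ give \eqref{trtr2B}. This is the paper's two-line proof; the difficulties you anticipate never arise.
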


\begin{proof}
Relation \eqref{trtr2B} can be obtained using conjugation:
\begin{multline*}
\tau_{n_1 + n_2}\left(v_1 sh^{n_1}(v_2) s_{n_1}\right) = \tau_{n_1 + n_2}\left(v_1 x_{ n_1+n_2-1, n_1+1} sh^{n_1}(v_2) s_{n_1} y_{n_1+1, n_1+n_2-1}\right)  \\
= \tau_{n_1 + n_2}\left(v_1  y_{n_1, n_1+n_2-2} s_{n_1+n_2-1} x_{ n_1+n_2-2, n_1} sh^{n_1-1}(v_2)\right) = \\
 a \tau_{n_1 + n_2}\left(v_1  y_{n_1, n_1+n_2-2} x_{ n_1+n_2-2, n_1} sh^{n_1-1}(v_2)\right) = 
a  \tau_{n_1+n_2 -1}\left(v_1 sh^{n_1 -1}(v_2)\right).
\end{multline*}

Relation \eqref{trtr4} is a consequence of \eqref{BMW4'}, \eqref{BMW12} and \eqref{trtr2B}:
\begin{multline*}
\tau_{n+2}(v e_{n+1}) = a^{-1} \tau_{n+2}(v e_{n+1} s_n e_{n+1}) = a^{-1} \tau_{n+2}\left(v e_{n+1}^2 s_n \right) = \\ 
a^{-1} \delta \tau_{n+2}(v e_{n+1} s_n ) = \delta  \tau_{n+1}(v e_{n}).
\end{multline*}

Relation \eqref{trtr5} is obtained straightforwardly using twice Relation \eqref{defei} and Relation \eqref{trtr4}.
\end{proof}

Recall the definition of the map $Cl_{n+1}$ given in (\ref{top closure map}).
\begin{lem}
For any $u \in BMW_{n+1}$, the following relation is satisfied:
\begin{equation}
\tau_{n+2}(u e_{n+1}) =  \tau_{n+1}\left(Cl_{n+1}(u) e_{n}\right),
\label{trtr1'}
\end{equation}
\end{lem}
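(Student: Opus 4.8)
The plan is to derive \eqref{trtr1'} from a single identity living one level up, in $BMW_{n+2}$, which says that conjugating by the cap-cup $e_{n+1}$ realises the partial closure:
\begin{equation}\label{eq-sandwich}
e_{n+1}\,u\,e_{n+1}=Cl_{n+1}(u)\,e_{n+1}\qquad\text{for all }u\in BMW_{n+1}.
\end{equation}
Granting \eqref{eq-sandwich}, the lemma follows with no recourse to a basis. Applying $\tau_{n+2}$ to the left-hand side, the trace property together with $e_{n+1}^2=\delta e_{n+1}$ from \eqref{BMW12} gives
\[
\tau_{n+2}\left(e_{n+1}u e_{n+1}\right)=\tau_{n+2}\left(u e_{n+1}^2\right)=\delta\,\tau_{n+2}\left(u e_{n+1}\right),
\]
while on the right-hand side $Cl_{n+1}(u)\in BMW_n$, so Relation \eqref{trtr4} applies directly and yields
\[
\tau_{n+2}\left(Cl_{n+1}(u) e_{n+1}\right)=\delta\,\tau_{n+1}\left(Cl_{n+1}(u) e_n\right).
\]
Comparing the two expressions and cancelling $\delta$, which is a non-zero-divisor since $R$ is an integral domain, gives precisely \eqref{trtr1'}.

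Thus the entire content of the lemma is concentrated in \eqref{eq-sandwich}, and establishing it is the main obstacle. Geometrically it is transparent in the Kauffman tangle algebra $KT_{n+2}\cong BMW_{n+2}$: the factor $e_{n+1}$ is the cap-cup on the strands $n+1$ and $n+2$, and inserting $u$ (whose $(n+2)$nd strand is trivial) between two such cap-cups closes the $(n+1)$st strand of $u$ by a regular isotopy, leaving $Cl_{n+1}(u)$ on the first $n$ strands and one surviving $e_{n+1}$. The checks that fix the normalisations come straight out of this picture: for $u=1$ both sides equal $\delta e_{n+1}$, and for $u=s_n^{\pm1}$ both sides equal $a^{\pm1}e_{n+1}$, matching \eqref{BMW4'}, \eqref{BMW4B'} and the definition \eqref{defei} of $e_{n+1}$.

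To turn the picture into a rigorous argument I would verify \eqref{eq-sandwich} on the inductive basis $\mathbf{b}^{BMW}_{n+1}$. Each basis element $u$ has one of the shapes $b$, $b\,s_n^{-1}x_{n-1,i}$, or $y_{i,n-1}s_n\,b$, with $b\in\mathbf{b}^{BMW}_n\subset BMW_n$ (writing $x_{n,i}=s_n^{-1}x_{n-1,i}$ and $y_{i,n}=y_{i,n-1}s_n$, and omitting $\iota_n$), and the factors $b$, $x_{n-1,i}$, $y_{i,n-1}$ involve only $s_1,\dots,s_{n-1}$, hence commute with $e_{n+1}$. Pushing the two cap-cups together through these commuting factors collapses $e_{n+1}\,u\,e_{n+1}$ to $\delta\,b\,e_{n+1}$, $a^{-1}b\,x_{n-1,i}\,e_{n+1}$ and $a\,y_{i,n-1}b\,e_{n+1}$ respectively, using only $e_{n+1}^2=\delta e_{n+1}$ and $e_{n+1}s_n^{\pm1}e_{n+1}=a^{\pm1}e_{n+1}$ from \eqref{BMW12}, \eqref{BMW4'} and \eqref{BMW4B'}. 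These coincide with the closures $Cl_{n+1}(b)=\delta\,b$, $Cl_{n+1}(b\,x_{n,i})=a^{-1}b\,x_{n-1,i}$ and $Cl_{n+1}(y_{i,n}\,b)=a\,y_{i,n-1}b$, read off from the single curl produced when the last strand is closed over the unique crossing, so \eqref{eq-sandwich} holds on each basis element and hence for all $u$ by linearity. This is the only genuine computation, and it is short and uniform across the three families.
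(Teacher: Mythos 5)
Your proof is correct, and it rests on exactly the same key fact as the paper's, namely the sandwich identity $e_{n+1}ue_{n+1}=Cl_{n+1}(u)e_{n+1}$ (which the paper simply asserts from the tangle picture, whereas you additionally verify it on the inductive basis --- a welcome extra). Where you diverge is in how you exploit it: the paper writes $\tau_{n+2}(ue_{n+1})=a^{-1}\tau_{n+2}(ue_{n+1}s_ne_{n+1})$ using \eqref{BMW4'}, cycles, applies the sandwich identity, and finishes with \eqref{trtr2B} in one chain with no division; you instead multiply by $e_{n+1}$ on the left, use the trace property and $e_{n+1}^2=\delta e_{n+1}$ to produce $\delta\,\tau_{n+2}(ue_{n+1})$, reduce the other side to the already-established special case \eqref{trtr4} (valid since $Cl_{n+1}(u)\in BMW_n$), and cancel $\delta$, which is legitimate because $\delta\neq 0$ in the integral domain $R$. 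Since \eqref{trtr4} was itself derived from \eqref{BMW4'} and \eqref{trtr2B}, the two arguments are close cousins; the paper's is marginally more economical in that it never divides, while yours buys a cleaner separation between the purely algebraic identity \eqref{eq-sandwich} in $BMW_{n+2}$ and the trace axioms, and makes explicit the verification that the paper leaves to the reader.
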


\begin{proof}
One first use Relation \eqref{BMW4'} and then Relation \eqref{trtr2B} to get:
\begin{multline*}
\tau_{n+2}(u e_{n+1}) = a^{-1} \tau_{n+2}(u e_{n+1} s_n e_{n+1}) = a^{-1} \tau_{n+2}(e_{n+1} u e_{n+1} s_n) \\ = a^{-1} \tau_{n+2}\left(Cl_{n+1}(u) e_{n+1} s_n \right) =  \tau_{n+1}\left(Cl_{n+1}(u) e_{n}\right)
\end{multline*}
since $e_{n+1}ue_{n+1}=Cl_{n+1}(u)e_{n+1}$ and $Cl_{n+1}(u) \in BMW_n$.
\end{proof}

\begin{prop}\label{propinddesc'}
 Let $2\leq k\leq n$ and $r\in BMW_{k+1}$. There exists $u,v\in BMW_{k}$ such that
 \[\tau_{n+1}(r)=\tau_{n+1}(u)+\tau_n(v) .\]
\end{prop}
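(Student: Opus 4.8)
The plan is to use the trace property together with the inductive basis $\mathbf{b}^{BMW}_{k+1}$ of $BMW_{k+1}$ from Proposition \ref{basisBMW}. By linearity it is enough to establish the claim when $r$ is one of the three types of basis element: an element $b\in BMW_k$, an element $y_{i,k}b=s_i\cdots s_k\,b$, or an element $b\,x_{k,i}=b\,s_k^{-1}\cdots s_i^{-1}$, with $b\in\mathbf{b}^{BMW}_k$. The first type is immediate: $\tau_{n+1}(b)$ already has the desired shape with $u=b\in BMW_k$ and $v=0$. For the second type, the trace property lets me rewrite $y_{i,k}b$ as $w\,s_k$ with $w=b\,s_i\cdots s_{k-1}\in BMW_k$, and then \eqref{trtr2B} (applied with $n_1=k$, $n_2=n+1-k$ and trivial second tensor factor) gives $\tau_{n+1}(w\,s_k)=a\,\tau_n(w)$, which contributes to the $\tau_n(v)$ part with $v=aw\in BMW_k$.

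The third type is where the difficulty concentrates. Using the trace property I first bring $b\,x_{k,i}$ into the form $w\,s_k^{-1}$ with $w\in BMW_k$. Since transverse traces only enjoy \emph{positive} stabilization, I cannot stabilize $s_k^{-1}$ directly; instead I invoke the relation \eqref{BMW7}, that is $s_k^{-1}=s_k-z+z\,e_k$, to split
\[\tau_{n+1}(w\,s_k^{-1})=\tau_{n+1}(w\,s_k)-z\,\tau_{n+1}(w)+z\,\tau_{n+1}(w\,e_k).\]
The first term equals $a\,\tau_n(w)$ as above (contributing to $v$), and the middle term is $\tau_{n+1}(-zw)$ with $-zw\in BMW_k$ (contributing to $u$). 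Everything is thus reduced to the single term $\tau_{n+1}(w\,e_k)$ with $w\in BMW_k$.

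The main obstacle is exactly this $e_k$--term, and it is where the absence of the inclusion property really bites, compounded by the fact that $e_k$ does not sit at the top index when $k<n$. My plan is to transport $e_k$ to the top generator $e_n$ by conjugation --- using that all the $e_i$ are conjugate in $BMW_{n+1}$ through $e_{i+1}=s_is_{i+1}e_is_{i+1}^{-1}s_i^{-1}$ --- and then apply the closure relation \eqref{trtr1'}, which is valid for an arbitrary factor times the \emph{top} idempotent, the extra generators created by the conjugation being absorbed via $e_is_i=a^{-1}e_i$ from \eqref{defei} and via \eqref{BMW4'}. Concretely I expect the level--reduction identity $\tau_{n+1}(w\,e_k)=\delta\,\tau_n(w\,e_k)$ for $k<n$ (the vertical strand running over an $e$ closes up to a factor $\delta$, exactly as in \eqref{valeurtrivial}); iterating it down to level $k+1$ and then applying \eqref{trtr1'} once at that level rewrites $\tau_{n+1}(w\,e_k)$ as $\tau_n(v')$ with $v'\in BMW_k$. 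The clean case is $k=n$, where $e_n$ is already the top generator and \eqref{trtr1'} gives $\tau_{n+1}(w\,e_n)=\tau_n\!\big(Cl_n(w)\,e_{n-1}\big)$ with $Cl_n(w)\,e_{n-1}\in BMW_n=BMW_k$ at once.

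Establishing this level--reduction for $e$--divisible elements rigorously --- reconciling the conjugation, which a priori leaves $BMW_k$ and introduces $s_n$, with the relations \eqref{trtr4} and \eqref{trtr1'} so that the outcome really lands back in $BMW_k$ --- is the delicate heart of the argument and the step I expect to cost the most work. Once it is in place, collecting the contributions of the three types of basis element produces $u,v\in BMW_k$ with $\tau_{n+1}(r)=\tau_{n+1}(u)+\tau_n(v)$, as required.
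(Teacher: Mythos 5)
Your overall skeleton is the same as the paper's: reduce to the three types of basis elements of $BMW_{k+1}$, dispose of the $y$-type by cyclicity and positive stabilization, and for the $x$-type split $s_k^{-1}=s_k-z+ze_k$ so that everything hinges on the single term $\tau_{n+1}(w e_k)$ with $w\in BMW_k$. You correctly identify that term as the crux, but your plan for it has a genuine problem of two kinds. First, the level-reduction identity $\tau_{n+1}(we_k)=\delta\,\tau_n(we_k)$ is asserted, not proved (it is in fact true, but proving it essentially requires the same ingredients as the direct route below, so it is a detour rather than a shortcut). Second, and more seriously, the route you describe --- iterate the level reduction down to level $k+1$ and then apply \eqref{trtr1'} once --- produces $\delta^{\,n-k}\,\tau_{k}\bigl(Cl_k(w)e_{k-1}\bigr)$, i.e.\ a trace evaluated at level $k$, not a term of the form $\tau_n(v')$ with $v'\in BMW_k$. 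Since transverse traces have no inclusion property, a level-$k$ trace cannot in general be promoted back to a level-$n$ trace, so as written your decomposition does not land in the form $\tau_{n+1}(u)+\tau_n(v)$ that the proposition asserts (and that the downstream induction in Theorem \ref{thmtrtrBMW1} consumes).

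The paper's fix is to go the other way: instead of pushing the level down to meet the index $k$, push the index up to meet the level. Since $sh$ is implemented by conjugation, the trace property gives $\tau_{n+1}(x)=\tau_{n+1}\bigl(sh^{\,n+1-j}(x)\bigr)$ for $x\in BMW_j$; applying this to the whole basis element replaces $b$ by $b'=sh^{\,n-k}(b)$ and turns the relevant generator into $s_n$ (resp.\ $e_n$) at the top. Then a single application of positive stabilization handles the $s_n$-term, and a single application of \eqref{trtr1'} gives $\tau_{n+1}\bigl(sh^{\,n-k}(w)e_n\bigr)=\tau_n\bigl(Cl_n(sh^{\,n-k}(w))e_{n-1}\bigr)=\tau_n\bigl(Cl_k(w)e_{k-1}\bigr)$, using $Cl_n\circ sh^{\,n-k}=sh^{\,n-k}\circ Cl_k$ and shift-invariance again; the element $Cl_k(w)e_{k-1}$ lies in $BMW_k$ and the trace sits exactly at level $n$, as required. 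If you replace your level-reduction iteration by this one-shot ``shift then close'' move, your argument becomes complete and coincides with the paper's.
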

\begin{proof}
It is enough to check that this is satisfied for $r$ a basis element of $BMW_{k+1}$. Then, by the inductive definition of the basis, there exists $b \in \mathbf{b}^{BMW}_{k}$ such that $r$ is of one of the following $3$ forms: $r =b$ or $r = y_{i,k} b$ or $r= b x_{k,i}$, with $i\in\{1,\dots,k\}$.
If $r=b$ then the proposition is obvious. Otherwise, let us consider the element $b'=sh^{n+1-(k+1)}(b)$ and use the fact that $\tau_{n+1}(x)=\tau_{n+1}(sh^{n+1-j}(x))$ for $j\in\{1,\dots,n+1\}$ and $x\in BMW_j$. If $r = y_{i,k} b$, we obtain
	\begin{multline*}\tau_{n+1}(r)=\tau_{n+1}(y_{i+n-k,n}b')=\tau_{n+1}(y_{i+n-k,n-1}s_nb')\\
	=a\tau_{n}(y_{i+n-k,n-1}b')=a\tau_n(y_{i,k-1}b) .
	\end{multline*}

If $r = b x_{k,i}$ we apply Relation \eqref{defei} and Relation \eqref{trtr1'} and we get the following
	\begin{multline*}
	\tau_{n+1}(r) = \tau_{n+1}(b'x_{n,i+n-k})=  \tau_{n+1}\left(b' s_{n}^{-1} x_{n-1,i+n-k}\right) \\
	= \tau_{n+1}(b' s_{n} x_{n-1,i+n-k}) + z \tau_{n+1}(b' e_{n} x_{n-1,i+n-k}) - z \tau_{n+1}(b' x_{n-1,i+n-k}) \\
	= a \tau_{n}(b' x_{n-1,i+n-k}) + z \tau_{n}\left(Cl_{n}(x_{n-1,i+n-k}b') e_{n-1} \right) - z \tau_{n+1}(b' x_{n-1,i+n-k}) \\
	= a \tau_{n}(b x_{k-1,i}) + z \tau_{n}\left(Cl_{k}(x_{k-1,i}b) e_{k-1} \right) - z \tau_{n+1}(b x_{k-1,i}) ,
	\end{multline*}
where we used that $Cl_n\circ sh^{n-k}=sh^{n-k}\circ Cl_k$.
\end{proof}

Let us denote by $\alpha_i$ the value $\tau_i(1)$ and by $\beta$ the value $\tau_2(e_1)$. 

\begin{thm}\label{thmtrtrBMW1}
For any $r \in BMW_{n}$, there exist $b, c_i \in R$, with $i = 1, \cdots, n$, such that
$$\tau_{n} (r) = b \beta + \sum_{i=1}^{n} c_i \alpha_i.$$
\end{thm}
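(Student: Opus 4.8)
The plan is to prove the statement by induction on $n$, using Proposition \ref{propinddesc'} as the descent engine and reducing everything down to a direct computation of $\tau_{n+1}$ on the three-dimensional algebra $BMW_2$, where the single parameter $\beta$ first appears. For the base cases $n=1,2$ I would argue by hand: in $BMW_1=R$ every element is a scalar, so $\tau_1(r)=c\alpha_1$; and for $BMW_2$, the basis $\{1,s_1,s_1^{-1}\}$ from Proposition \ref{basisBMW} gives $\tau_2(1)=\alpha_2$, $\tau_2(s_1)=a\alpha_1$ by positive stabilization, and, rewriting $s_1^{-1}=s_1-z+z e_1$ via \eqref{BMW7}, $\tau_2(s_1^{-1})=a\alpha_1-z\alpha_2+z\beta$, all of the claimed shape.

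For the inductive step, assuming the result at level $n$ (with $n\geq2$), I take $r\in BMW_{n+1}$ and apply Proposition \ref{propinddesc'} repeatedly, keeping the ambient trace level fixed at $n+1$ and letting the complexity parameter $k$ run from $n$ down to $2$. A first application ($k=n$) rewrites $\tau_{n+1}(r)$ as $\tau_{n+1}(u^{(n)})+\tau_n(v^{(n)})$ with $u^{(n)},v^{(n)}\in BMW_n$; the crucial point is that $\tau_{n+1}(u^{(n)})$ is still evaluated at level $n+1$, so I feed $u^{(n)}\in BMW_{(n-1)+1}$ back into the Proposition with parameter $k=n-1$, and iterate. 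Each step peels off a term $\tau_n(v^{(k)})$ living one level down, and the procedure terminates with a single surviving top-level term $\tau_{n+1}(u^{(2)})$ with $u^{(2)}\in BMW_2$. Every shed term $\tau_n(v^{(k)})$ is the trace at level $n$ of an element of $BMW_n$, hence is of the form $b\beta+\sum_{i=1}^n c_i\alpha_i$ by the induction hypothesis.

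It then remains to evaluate $\tau_{n+1}$ on the basis $\{1,s_1,e_1\}$ of $BMW_2$. Directly, $\tau_{n+1}(1)=\alpha_{n+1}$. Since all the $s_i$ are mutually conjugate (using $s_{i+1}=(s_is_{i+1})s_i(s_is_{i+1})^{-1}$) and all the $e_i$ are mutually conjugate (using $e_{i+1}=s_is_{i+1}e_is_{i+1}^{-1}s_i^{-1}$), the trace property gives $\tau_{n+1}(s_1)=\tau_{n+1}(s_n)=a\alpha_n$ by positive stabilization, and $\tau_{n+1}(e_1)=\tau_{n+1}(e_n)$, which by iterating Relation \eqref{trtr4} (equivalently \eqref{trtr1'}) with $v=1$ equals $\delta^{n-1}\tau_2(e_1)=\delta^{n-1}\beta$. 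Assembling all contributions, $\tau_{n+1}(r)$ is a combination of $\beta,\alpha_1,\dots,\alpha_{n+1}$, which closes the induction.

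The genuinely substantive content is already packaged in Proposition \ref{propinddesc'} and in the lemmas producing \eqref{trtr4} and \eqref{trtr1'}; granted those, the argument is essentially bookkeeping. The main obstacle to watch is twofold: first, one must verify that no circularity arises, and indeed it does not, since the surviving top-level term always lands in $BMW_2$ and is computed by hand while every level-$n$ term is disposed of by the induction hypothesis, so the level-$(n+1)$ statement is never used in its own proof; and second, one must check that the descent terminates cleanly at $BMW_2$ without leaking new independent parameters, which is exactly what the conjugacy of the $e_i$'s together with Relation \eqref{trtr4} guarantees by forcing every $e$-contribution to collapse onto a multiple of $\beta$.
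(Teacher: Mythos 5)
Your proof is correct and follows essentially the same route as the paper: repeated descent via Proposition \ref{propinddesc'} until only traces of elements of $BMW_2$ (at various levels) survive, then a direct evaluation of those terminal terms. The only cosmetic difference is that you work in the basis $\{1,s_1,e_1\}$ of $BMW_2$ and iterate Relation \eqref{trtr4} to get $\tau_k(e_1)=\delta^{k-1}\beta$, whereas the paper uses $\{1,s_1,s_1^{-1}\}$ and the (equivalent) recursion \eqref{trtr5} bottoming out at $\tau_2\left(s_1^{-1}\right)$ via \eqref{BMW7}.
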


\begin{proof}
The theorem follows by decreasing induction from Proposition \ref{propinddesc'}. The only terms which remain to be treated at the end of the induction, are of the form $\tau_k(r)$, with $r\in BMW_2$ and $k\geq 2$, and $\tau_1(1)$. We have $\tau_k(1)=\alpha_k$, $\tau_k(s_1)=a\tau_{k-1}(1)=a\alpha_{k-1}$ and, as for $\tau_k\left(s_1^{-1}\right)$, we can rewrite this term, if $k>2$, according to Relation \eqref{trtr5} as follows:
\begin{multline*}
 \tau_k\left(s_1^{-1}\right) = \tau_k\left(s_{k-1}^{-1}\right)  = -a \delta  \tau_{k-2}(1) +(a +z \delta)  \tau_{k-1}(1) + \delta \tau_{k-1}\left( s_{k-2}^{-1}\right) - z \tau_{k}(1) \\ 
=  -a \delta  \tau_{k-2}(1) +(a +z \delta)  \tau_{k-1}(1) + \delta \tau_{k-1}\left( s_{1}^{-1}\right) - z \tau_{k}(1) .
\end{multline*}
So we are left with the only problematic term $\tau_2\left(s_1^{-1}\right)$, to which one can apply Relation \eqref{BMW7} to get:
$$\tau_2\left(s_1^{-1}\right)= \tau_2(s_1) + z\tau_2(e_1) - z\tau_2(1) = a \tau_1(1) + z\tau_2(e_1) - z\tau_2(1).$$
This proves the theorem.
\end{proof}

This Theorem provides an algorithm to compute the transverse trace of any element of $BMW_n$ in terms of the $\alpha_i$'s and $\beta$. Whatever the transverse trace one starts with, if one applies this algorithm, the linear combination obtained at the end has the same coefficients. But for the BMW algebras, there exists no decomposition like \eqref{decHecke} as for the Hecke algebras, hence we do not get an explicit recursive formula for the transverse Markov trace and as a consequence nothing ensures a priori that the trace could not be computed differently and hence that the linear combination obtained as a result is unique. This question is adressed in the following lemma.

\begin{lem}\label{lemtrtrBMW}
Fix $r \in BMW_{n}$, the parameters $b, c_i \in R$, with $i = 1, \cdots, n$, such that
$$\tau_{n} (r) = b \beta + \sum_{i=1}^{n} c_i \alpha_i$$
are uniquely determined.
\end{lem}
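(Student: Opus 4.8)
The statement that the coefficients $b,c_1,\dots,c_n$ are uniquely determined is to be read across \emph{all} transverse Markov traces at once: the algorithm underlying Theorem~\ref{thmtrtrBMW1} attaches to $r$ a formal combination $b\beta+\sum_i c_i\alpha_i$, and the content of the lemma is that, whatever choices are made while running it, one always lands on the same coefficients. The plan is therefore to show that no nontrivial $R$-linear relation $b\beta+\sum_{i=1}^n c_i\alpha_i=0$ can hold simultaneously for the parameters $\alpha_i=\tau_i(1)$, $\beta=\tau_2(e_1)$ of every transverse Markov trace on $(BMW_m)_{m\geq1}$. Indeed, if two runs produced outputs whose difference is $b\beta+\sum_i c_i\alpha_i$, then evaluating both expressions at the parameters of an arbitrary trace—where each run computes $\tau_n(r)$—shows that this difference vanishes on every transverse Markov trace; ruling out such a relation forces the two outputs to coincide.

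To rule out such a relation I would exhibit $n+1$ transverse Markov traces whose parameter vectors $(\alpha_1,\dots,\alpha_n,\beta)$ are linearly independent over $R$, i.e. assemble an invertible $(n+1)\times(n+1)$ matrix of parameters. First I would produce $n$ of them by pulling back the basic transverse Markov traces $\tau^{(k)}$ of the Hecke tower (Section~\ref{arbbastrtr}) along the algebra surjection $q_E\colon BMW_m\to H_m$ of \eqref{surjBMWH}. Since $q_E$ is an algebra morphism compatible with the embeddings $\iota_m$ and sends $s_m$ to $\s_m$, the family $\bigl(\tau^{(k)}_m\circ q_E\bigr)_{m\geq1}$ satisfies the trace property and the positive stabilization property, hence is a transverse Markov trace on the BMW tower. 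Its parameters are $\alpha_j=\tau^{(k)}_j(q_E(1))=\tau^{(k)}_j(1)=\delta_{j,k}$ and, crucially, $\beta=\tau^{(k)}_2(q_E(e_1))=\tau^{(k)}_2(0)=0$, because $H_m$ is exactly the quotient of $BMW_m$ by $e_1=0$. These $n$ traces thus realize the standard basis vectors of the $\alpha$-block together with vanishing $\beta$.

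For the last trace I would take the Kauffman trace $t^K$ of Theorem~\ref{reconstr Kauffman}: being a classical Markov trace it satisfies the trace property and the positive part of the stabilization property, so it is a transverse Markov trace. A short computation using $e_1=z^{-1}(s_1^{-1}-s_1)+1$, the stabilization values $t_2^K(s_1^{\pm1})=a^{\pm1}t_1^K(1)$, the inclusion value $t_2^K(1)=\delta\,t_1^K(1)$ and the identity $\delta-\odelta=1$ gives $\beta=t_2^K(e_1)=(\delta-\odelta)\,t_1^K(1)=t_1^K(1)\neq0$. Ordering the parameters as $(\alpha_1,\dots,\alpha_n,\beta)$, the $n$ pulled-back Hecke traces contribute the rows $[I_n\mid 0]$, while $t^K$ contributes a final row with nonzero last entry, so the full matrix is block lower-triangular with determinant $t_1^K(1)\neq0$. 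Evaluating the hypothetical relation $b\beta+\sum_i c_i\alpha_i=0$ on these $n+1$ traces then yields a homogeneous system with invertible coefficient matrix, forcing $b=c_1=\dots=c_n=0$ and establishing uniqueness.

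The only genuinely delicate point, and the reason this lemma is needed at all, is the absence for the BMW tower of a module decomposition analogous to \eqref{decHecke}: without it one cannot read off uniqueness from a canonical recursion, and one must instead certify that $\alpha_1,\dots,\alpha_n,\beta$ are truly free by realizing independent values through concrete traces. I expect the construction of the $n$ Hecke pullbacks to be routine once the compatibility of $q_E$ with stabilization and inclusion is noted, and the main (still short) computation to be the verification that the Kauffman trace gives $\beta\neq0$, which is precisely what separates the $\beta$-direction from the $\alpha$-directions.
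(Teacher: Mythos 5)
Your proposal is correct and follows essentially the same route as the paper: it evaluates the putative relation on the $n$ pullbacks $\tau^{(k)}\circ q_E$ of the basic Hecke transverse Markov traces (which have $\beta=0$ and $\alpha_j=\delta_{j,k}$) to pin down the $c_i$, and then on the Kauffman trace $t^K$ (with $t_2^K(e_1)=\delta\neq0$) to pin down $b$. The only cosmetic difference is that you phrase the conclusion via an invertible $(n+1)\times(n+1)$ parameter matrix rather than solving for the coefficients one by one.
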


\begin{proof}Let us fix $r \in BMW_n$. In the previous Theorem, we have seen that there exist $b, c_i \in R$ with $i= 1, \cdots,n$ such that, for all transverse Markov traces $(\tau_n)_{n \geq1}$ on $(BMW_n)_{n\geq1}$, $\tau_n(r) = b \beta + \sum_{i=1}^{n} c_i \alpha_i$. 

For any $k \in \Z_{>0}$, one can lift the basic transverse Markov traces defined in Section \label{arbbastrtr} to the tower of BMW algebras just by setting that, for any $u \in BMW_n$,
$$ \tau_n^{(k)}(u) = \tau_n^{(k)}(q_E(u)).$$
Let $j \in \{1, \cdots, n\}$, one then has $\tau_n^{(j)}(r) = b \beta + \sum_{i=1}^{n} c_i \alpha_i = c_j$ as 
$$\beta= \tau_2^{(j)}(e_1)= \tau_2^{(j)}(q_E(e_1)) = \tau_2^{(j)}(0) = 0$$
and
$$\alpha_i= \tau_i^{(j)}(1)= \delta_{i,j}.$$
As a conclusion, for all $j=1, \cdots,n$, the parameter $c_j$ is uniquely defined as $\tau_n^{(j)}(r)$.

Consider now the classical Markov trace $t_n^K$, which in turns is also a transverse Markov trace on the tower of the BMW algebras. Note that one has $t_n^K(e_1) = \delta$ which is non-zero. Then the parameter $b$ is uniquely defined as 
$$ \frac{t_n^K(r) - \sum_{i=1}^{n} \tau_n^{(i)}(r) t_i^K(1)}{t_n^K(e_1)}.$$
\end{proof}

\begin{thm*}
For any family of parameters $\{ \beta, \alpha_n\}_{n\geq 1} \subset R$, there exists a unique transverse Markov trace $(\tau_n)_{n\geq 1}$ on $(BMW_n)_{n\geq1}$ such that $\tau_n(e_1) = \beta$ and $\tau_n(1)= \alpha_n$.
\end{thm*}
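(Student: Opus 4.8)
The plan is to split the statement into the injectivity and the surjectivity of the assignment $\tau\mapsto\bigl(\tau_2(e_1),\{\tau_n(1)\}_{n\geq1}\bigr)$ from the $R$-module of transverse Markov traces on $(BMW_n)_{n\geq1}$ to $R\times R^{\Z_{>0}}$; here I read the symbol $\beta$ in the statement as the value $\tau_2(e_1)$ occurring in Theorem \ref{thmtrtrBMW1}, the higher values $\tau_n(e_1)=\delta^{n-2}\beta$ being then forced by Relation \eqref{trtr4} together with the conjugation invariance of the $e_i$.

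Injectivity (uniqueness) is immediate from the two preceding results: Theorem \ref{thmtrtrBMW1} writes every value as $\tau_n(r)=b\beta+\sum_i c_i\alpha_i$, and Lemma \ref{lemtrtrBMW} guarantees that the scalars $b,c_i\in R$ depend only on $r$ and not on the chosen trace. Hence two transverse Markov traces sharing the same $\beta$ and the same family $\{\alpha_n\}$ agree on every $r\in BMW_n$ for every $n$, so they coincide. I would dispatch this in essentially one sentence.

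Surjectivity (existence) is the substantial part, and I would reduce it to two families of building blocks, exploiting that transverse Markov traces form an $R$-module. First, the basic Hecke transverse traces $(\tau_n^{(k)})_{n\geq1}$ pull back along the surjection $q_E$ of \eqref{surjBMWH} to transverse Markov traces on $(BMW_n)_{n\geq1}$: since $q_E$ is an algebra morphism sending $s_n$ to $\sigma_n$ and compatible with the towers, the trace and positive-stabilization axioms transport verbatim, and as $q_E(e_1)=0$ these satisfy $\tau_2^{(k)}(e_1)=0$ and $\tau_n^{(k)}(1)=\delta_{n,k}$; they thus realize every family $\{\alpha_n\}$ together with $\beta=0$. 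It then remains to exhibit a single transverse trace $B$ with $B_2(e_1)=1$ and $B_n(1)=0$ for all $n$; granting this, $\beta\,B+\sum_{k\geq1}\alpha_k\,\tau^{(k)}$ (a locally finite sum, since $\tau_n^{(i)}$ vanishes on $BMW_n$ for $i>n$) realizes the prescribed parameters and finishes the proof.

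Constructing $B$ is where I expect the only real difficulty, because one cannot simply normalize the Kauffman trace by $\delta^{-1}$: the element $\delta$ is not invertible in $R$. The device is to set $B:=\delta^{-1}\bigl(t^K-\sum_i t_i^K(1)\,\tau^{(i)}\bigr)$, where $t^K$ is the Kauffman Markov trace with $t_1^K(1)=\delta$, so that $t_i^K(1)=\delta^i$ and $t_2^K(e_1)=\delta$. Applying Theorem \ref{thmtrtrBMW1} to $t^K$ gives $t_n^K(r)=b_r\delta+\sum_i\tau_n^{(i)}(r)\,\delta^i$, whence the bracketed functional takes the value $b_r\delta$ on $r$ and is therefore divisible by $\delta$ with quotient $b_r\in R$; thus $B$ is genuinely $R$-valued. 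Since $t^K$ and the $\tau^{(i)}$ are transverse Markov traces the bracket is one as well, and because $R$ is an integral domain with $\delta\neq0$ one may divide its trace and positive-stabilization identities by $\delta$ to conclude that $B$ is again a transverse Markov trace. A one-line evaluation then yields $B_2(e_1)=\delta^{-1}(\delta-0)=1$ and $B_n(1)=\delta^{-1}(\delta^n-\delta^n)=0$, as required. The heart of the argument is thus this divisibility-by-$\delta$ observation; everything else is a formal consequence of Theorem \ref{thmtrtrBMW1}, Lemma \ref{lemtrtrBMW}, and the transport of axioms through $q_E$.
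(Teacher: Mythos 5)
Your proof is correct and follows essentially the same route as the paper: uniqueness via Theorem \ref{thmtrtrBMW1} and Lemma \ref{lemtrtrBMW}, and existence via the explicit combination $\tau_n(r)=\frac{\beta}{\delta}t_n^K(r)+\sum_i\tau_n^{(i)}(r)(\alpha_i-\beta\delta^{i-1})$, which is algebraically identical to your $\beta B+\sum_k\alpha_k\tau^{(k)}$. Your only addition is to make explicit the divisibility-by-$\delta$ argument (needed since $\delta$ is not invertible in $R$), a point the paper leaves implicit but which is justified exactly as you say by applying Theorem \ref{thmtrtrBMW1} to $t^K$.
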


\begin{proof}
If such a transverse trace exists, it is necessarily unique by the Statements \ref{thmtrtrBMW1} and \ref{lemtrtrBMW} because the chosen family of parameters determines entirely the trace.

To see that it exists, let us construct one as follows:
$$\tau_n(r)= \frac{\beta}{\delta} t_n^K(r) + \sum_{i=1}^{n} \tau_n^{(i)}(r) (\alpha_i - \beta \delta^{i-1}). $$
\end{proof}

\begin{cor}
The family $\{\left(t_n^K\right)_{n\geq1}, \left(\tau_n^{(i)}\right)_{n\geq1}, i\in \Z_{>0}\}$ forms a basis of the space of transverse Markov traces on the tower of BMW algebras $(BMW_n)_{n\geq1}$.
\end{cor}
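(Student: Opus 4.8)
The plan is to prove existence and uniqueness separately, leveraging the structural results already established. For uniqueness, I would observe that Theorem \ref{thmtrtrBMW1} together with Lemma \ref{lemtrtrBMW} already does the heavy lifting: the former shows that any transverse Markov trace satisfies $\tau_n(r)=b\beta+\sum_{i=1}^n c_i\alpha_i$ for coefficients depending only on $r$, and the latter shows these coefficients are uniquely determined. Hence once the family $\{\beta,\alpha_n\}_{n\geq1}$ is fixed, the value of $\tau_n(r)$ is forced for every $r$ and every $n$, giving uniqueness immediately.

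For existence, the idea is to write down an explicit candidate as a linear combination of the transverse Markov traces we already know to exist, namely the lifted basic traces $\left(\tau_n^{(i)}\right)_{n\geq1}$ coming from the Hecke algebra (via $q_E$) and the classical Kauffman trace $\left(t_n^K\right)_{n\geq1}$, which is a transverse Markov trace since any classical Markov trace is in particular transverse. The candidate
\[
\tau_n(r)=\frac{\beta}{\delta}\,t_n^K(r)+\sum_{i=1}^n\tau_n^{(i)}(r)\bigl(\alpha_i-\beta\delta^{i-1}\bigr)
\]
is by construction an $R$-linear combination of transverse Markov traces, hence is itself a transverse Markov trace (the space of transverse Markov traces is an $R$-module, as the trace and positive stabilization conditions are $R$-linear). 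It therefore remains only to verify that this particular combination realizes the prescribed parameters.

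The concrete check is to evaluate the candidate on $1$ and on $e_1$. Using $t_n^K(1)=\delta^{n-1}t_1^K(1)=\delta\cdot\delta^{n-2}$ — or more directly the inclusion property with $d=\delta$ and the normalization $t_1^K(1)=\delta$, so that $t_n^K(1)=\delta^{n}$ up to the chosen normalization — together with $\tau_n^{(i)}(1)=\delta_{i,n}$, one computes $\tau_n(1)$ and checks it equals $\alpha_n$, the cross-terms in $\beta$ cancelling by design. Similarly, using $t_n^K(e_1)=\delta$ and $\tau_n^{(i)}(e_1)=\tau_n^{(i)}(q_E(e_1))=\tau_n^{(i)}(0)=0$, one finds $\tau_n(e_1)=\frac{\beta}{\delta}\cdot\delta=\beta$. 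These two evaluations confirm the candidate has the required parameters.

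The main obstacle is essentially bookkeeping rather than conceptual: one must be careful about the precise normalization of $t_n^K$ on the unit (whether $t_n^K(1)$ equals $\delta^{n-1}$ or $\delta^n$ under the chosen convention $t_1^K(1)=\delta$), since the telescoping cancellation of the $\beta$-terms in $\tau_n(1)$ depends on getting the factor $\delta^{i-1}$ matched correctly against $t_n^K(1)$. Once the normalization is pinned down consistently with the inclusion property, the verification is a short computation, and the fact that the candidate is a legitimate transverse Markov trace is automatic from the $R$-module structure, so no further trace or stabilization identities need to be checked by hand.
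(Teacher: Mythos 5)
Your proposal is correct and takes essentially the same route as the paper: uniqueness from Theorem \ref{thmtrtrBMW1} and Lemma \ref{lemtrtrBMW}, existence via the identical explicit combination $\frac{\beta}{\delta}t_n^K(r)+\sum_{i=1}^n\tau_n^{(i)}(r)(\alpha_i-\beta\delta^{i-1})$, and the parameter check you carry out (which the paper leaves implicit) works exactly as you anticipate, since $t_1^K(1)=\delta$ forces $t_n^K(1)=\delta^n$ and $t_2^K(e_1)=\delta$, the case $n=2$ being the only one needed because $\beta=\tau_2(e_1)$.
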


\vspace{1cm}

\paragraph*{\bf Acknowledgements}
\ 
\newline
A.-L.T. is grateful to the Hausdorff Research Institute for Mathematics (HIM), University of Bonn, for the warm hospitality and the support as this project was completed during her visit at the HIM in fall 2017 within the Trimester Program 'Symplectic Geometry and Representation Theory'.

\bibliographystyle{alpha}
\bibliography{biblioBMW}
\vspace{0.1in}

\noindent L.P.d'A.: { \sl \small Laboratoire de Math\'ematiques de Reims, Universit\'e de Reims Champagne-Ardenne, 51687 Reims, France} 
\newline \noindent {\tt \small email: loic.poulain-dandecy@univ-reims.fr}

\noindent A.-L.T.: { \sl \small Institut f\"ur Geometrie und Topologie, Fachbereich Mathematik, Universit\"at Stuttgart, 70569 Stuttgart, Deutschland} 
\newline \noindent {\tt \small email: anne-laure.thiel@mathematik.uni-stuttgart.de}

\noindent E.W.: { \sl \small Institut Math\'ematique de Bourgogne, UMR 5584, Universit\'e de Bourgogne Franche Comt\'e, 21078 Dijon, France} 
\newline \noindent {\tt \small email: emmanuel.wagner@u-bourgogne.fr}

\end{document}